\newtheorem{theorem}{Theorem}
\newtheorem{lemma}[theorem]{Lemma}
\newtheorem{problem}[theorem]{Problem}
\newtheorem{observation}[theorem]{Observation}
\newtheorem{megaclaim}{Claim}
\newtheorem{definition}{Definition}
\newcommand{\claim}[2]{\begin{megaclaim}\label{#1} #2 \end{megaclaim}}
\newcommand{\refclaim}[1]{Claim~\ref{#1}}
\newcommand{\cin}{\mbox{Int}}
\newcommand{\dom}{\mbox{dom}}
\newcommand{\vf}{\varphi}
\newcommand{\ob}[1]{{\rm O}$_{\rm #1}$}
\date{}
\title{$5$-list-coloring planar graphs with distant precolored vertices}
\author{Zden\v{e}k Dvo\v{r}\'ak\thanks{Computer Science Institute of Charles University, Prague, Czech Republic.
E-mail: {\tt rakdver@iuuk.mff.cuni.cz}.
Supported by Institute for Theoretical Computer Science (ITI), project 1M0545 of Ministry of Education of Czech Republic,
and by project GA201/09/0197 (Graph colorings and flows: structure and applications) of Czech Science Foundation.
Further work after these grants finished was supported by the Center of Excellence -- Inst. for Theor. Comp. Sci., Prague, project P202/12/G061 of Czech Science Foundation.}
\and
Bernard Lidick\'y\thanks{
  Iowa State University, Ames IA, USA.
  E-mail: {\tt lidicky@iasate.edu}.}
\and
Bojan Mohar\thanks{Department of Mathematics, Simon Fraser University, Burnaby, B.C. V5A 1S6.
  E-mail: {\tt mohar@sfu.ca}.
  Supported in part by an NSERC Discovery Grant (Canada),
  by the Canada Research Chair program, and by the
  Research Grant P1--0297 of ARRS (Slovenia).}~\thanks{On leave from:
  IMFM \& FMF, Department of Mathematics, University of Ljubljana, Ljubljana,
  Slovenia.}
\and
Luke Postle\thanks{University of Waterloo.  E-mail: {\tt lpostle@uwaterloo.ca}.}}
\begin{document}
\maketitle

\begin{abstract}
We answer positively the question of Albertson asking whether every planar graph can be $5$-list-colored even
if it contains precolored vertices, as long as they are sufficiently far apart from each other.  In order to prove this claim,
we also give bounds on the sizes of graphs critical with respect to $5$-list coloring.  In particular,
if $G$ is a planar graph, $H$ is a connected subgraph of $G$ and $L$ is an assignment of lists of colors to the vertices
of $G$ such that $|L(v)|\ge 5$ for every $v\in V(G)\setminus V(H)$ and $G$ is not $L$-colorable, then
$G$ contains a subgraph with $O(|H|^2)$ vertices that is not $L$-colorable.
\end{abstract}

\section{List colorings of planar graphs}

For a graph $G$, a \emph{list assignment} is a function $L$ that assigns a set of colors to each vertex of $G$.
For $v\in V(G)$, we say that $L(v)$ is the \emph{list} of $v$.  An \emph{$L$-coloring} of $G$ is a function $\vf$
such that $\vf(v)\in L(v)$ for every $v\in V(G)$ and $\vf(u)\neq \vf(v)$ for any pair of adjacent vertices $u,v\in V(G)$.
A graph $G$ is \emph{$k$-choosable} if $G$ is $L$-colorable for every list assignment $L$ such that $|L(v)|\ge k$ for each $v\in V(G)$.

A well-known result by Thomassen~\cite{thomassen1994} states that every planar graph is $5$-choosable.  This implies that
planar graphs are $5$-colorable.  Since planar graphs are known to be $4$-colorable~\cite{AppHak1, AppHakKoc}, a natural
question is whether the result can be strengthened.  Voigt~\cite{voigt1993} gave an example of a non-$4$-choosable planar
graph; hence, the vertices with lists of size smaller than $5$ must be restricted in some way.  For example, Albertson \cite{Alb98} asked the following question.

\begin{problem}\label{quest-albertson}
Does there exist a constant $d$ such that whenever $G$ is a planar graph with list assignment $L$ that
gives lists of size one or five to its vertices and the distance between any pair of vertices with lists
of size one is at least $d$, then $G$ is $L$-colorable?
\end{problem}

For usual colorings, Albertson~\cite{Alb98} proved that if $S$ is a set of vertices in a planar graph $G$ that are precolored with colors $1$--$5$ and are at distance at least 4 from each other, then the precoloring
of $S$ can be extended to a 5-coloring of $G$. 
This solved a problem asked earlier by Thomassen~\cite{Thomassen97}\footnote{The problem was posted in the preprint version of \cite{Thomassen97}; the published version refers to it as being solved already.}.
This result does not generalize to 4-colorings even if we have only two precolored vertices (arbitrarily far apart). Examples are given by triangulations of the plane that have precisely two vertices of odd degree. As proved by Ballantine \cite{Ballantine}
and Fisk \cite{Fisk78}, the two vertices of odd degree must have the same color in every 4-coloring. Thus, precoloring them with a different color, we cannot extend the precoloring to a 4-coloring of the whole graph.

Recently, there has been significant progress towards the solution of Albertson's problem, see \cite{Axenovich20111046} and \cite{5choosfar}.
Let us remark that when the number of precolored vertices is also bounded by some constant, then the answer is positive
by the results of Kawarabayashi and Mohar~\cite{kawmoh} on $5$-list-coloring graphs on surfaces.
In this paper, we prove that the answer is positive in general.

\begin{theorem}\label{thm-main}
If $G$ is a planar graph with list assignment $L$ that
gives lists of size one or five to its vertices and the distance between any pair of vertices with lists
of size one is at least\/ $20780$, then $G$ is $L$-colorable.
\end{theorem}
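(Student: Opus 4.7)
The proof is by contradiction. Let $(G,L)$ be a counterexample minimizing $|V(G)|$. Deleting vertices from $G$ only preserves or increases pairwise distances among the remaining precolored vertices, so every proper subgraph of $G$ still satisfies the distance hypothesis; by minimality, $G$ must be $L$-\emph{critical}, i.e.\ every proper subgraph of $G$ is $L$-colorable. The case where at most one vertex is precolored follows directly from Thomassen's theorem in its one-vertex-precolored form (a planar graph with one vertex of list size $1$ and all others of list size $5$ is always $L$-colorable), so we may assume the set $P=\{v_1,\dots,v_k\}$ of precolored vertices satisfies $k\ge 2$.

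The central tool is the critical subgraph bound announced in the abstract. Since $G$ is $L$-critical, all vertices outside $P$ have lists of size $5$, and the precolored vertices form the only places where $|L|<5$, for every connected subgraph $H\subseteq G$ with $P\subseteq V(H)$ the bound yields $|V(G)|\le c\cdot |V(H)|^2$ for a universal constant $c$. I would then execute an inductive reduction on $k$: isolate one precolored vertex $v_i$ by locating a short cycle $C$ in $G$ (of length bounded by a constant) that separates a closed disc $D$ containing $v_i$ and no other vertex of $P$ from the rest of $G$. Inside $D$ the only precolored vertex is $v_i$, so the single-vertex case extends the precoloring of $v_i$ to an $L$-coloring of $D$; this in particular colors $C$. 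Outside $D$, one then has a planar graph with $C$ playing the role of a precolored subgraph, together with $k-1$ of the original precolored vertices, all still pairwise far apart and far from $C$. A strengthened inductive hypothesis that allows a precolored short cycle in place of one precolored vertex then produces an $L$-coloring of the outside, and the two colorings agree on $C$, contradicting that $G$ is not $L$-colorable.

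The existence of such a short separating cycle $C$ is extracted from the critical subgraph bound applied to a carefully chosen $H$ built from distance layers around $v_i$. In essence, the $O(|V(H)|^2)$ bound forces some layer at distance roughly $19828/2$ from $v_i$ to have bounded length, and one then promotes this short layer to a genuine cycle separation with good list-coloring properties (using standard facts about $L$-coloring planar graphs whose outer face is a short cycle, with lists of size at least $3$ on the cycle and $5$ inside). The main obstacles are twofold: first, turning the layer-length bound into an honest separator on which the inherited lists behave as required; second, tracking the loss in the distance condition at each inductive step, since each peeling consumes a constant amount of slack. It is precisely this quantitative bookkeeping — the sum of the constant losses in the $O(|H|^2)$ theorem, the single-vertex extension, and the precolored-cycle extension — that fixes the specific numerical value $19828$ appearing in the statement.
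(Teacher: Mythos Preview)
Your outline has two genuine gaps.

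First, the layer argument does not produce a separating cycle of constant length. Theorem~\ref{thm-connsg} requires the connected subgraph $H$ to contain every vertex with list of size less than five, hence all of $P$. Since the precolored vertices are pairwise at distance at least $19828$, any such $H$ has $|V(H)|\ge 19828(k-1)$, and the bound $|V(G)|\le 8|V(H)|^2$ only guarantees, by pigeonhole over the roughly $19828/2$ layers around $v_i$, a layer of size $O(|V(H)|^2/19828)=O(19828\,k^2)$ --- not a constant. You cannot take $H$ to be a ball around $v_i$ alone, because that ball omits the other precolored vertices and the hypothesis $|L(v)|\ge 5$ for $v\notin V(H)$ of Theorem~\ref{thm-connsg} fails.

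Second, and more seriously, your induction does not close. If you color the inside of $C$ first (as you describe), the induced coloring of $C$ is one particular precoloring, and there is no reason it should extend to the outside: a precolored short cycle together with interior lists of size five is \emph{not} always extendable (this is exactly what Lemma~\ref{lemma-preouf} quantifies; there are $H$-critical graphs for every cycle length $|H|\ge 5$). If instead you color the outside first and then try to extend across $C$ into the disc containing $v_i$, you face a planar graph with a short precolored boundary cycle \emph{and} one far internal precolored vertex. That statement is true, but it is the hard core of the paper --- essentially Lemma~\ref{lemma-albimp}, whose proof via Lemma~\ref{lemma-albimps} occupies all of Section~\ref{sec-albertson} and cannot be dismissed as ``the single-vertex case''. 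Moreover, your ``strengthened inductive hypothesis'' with one precolored cycle in place of one vertex does not iterate: peeling off a second vertex leaves two precolored cycles, and the hypothesis no longer applies.

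The paper's route is different. It never searches for separating cycles; it works throughout with a precolored \emph{path} $P$ on the outer face, and when a vertex $x\in X$ lies near the boundary it cuts $G$ along a shortest path to $x$, absorbing $x$ into a new, longer precolored boundary path of length at most $2M+O(1)$. The weight bound of Theorem~\ref{thm-prepathw} and the derived size bound of Lemma~\ref{lemma-boundsize} (both for $P$-critical graphs with a precolored path, not for $H$-critical graphs with a precolored cycle) control the structure tightly enough to drive the induction of Lemma~\ref{lemma-albcorr}, reducing everything to the one-internal-vertex base case of Lemma~\ref{lemma-albimp}. The constant $19828$ is exactly $D(2,15)$ in the recursion $D(M,k)=D(M,k-1)+16k^2$ that tracks how much distance each cut consumes; it is not the output of a layer-counting pigeonhole.
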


In the proof, we need the following result concerning the case that the precolored vertices form a connected subgraph,
which is of independent interest.

\begin{theorem}\label{thm-connsg}
Suppose that $G$ is a planar graph, $H$ is a connected subgraph of $G$ and $L$ is an assignment of lists to the vertices
of $G$ such that $|L(v)|\ge 5$ for $v\in V(G)\setminus V(H)$ (and the lists of vertices of $H$ are arbitrary).  If $G$ is not $L$-colorable, then
$G$ contains a subgraph $F$ with at most $8|V(H)|^2$ vertices such that $F$ is not $L$-colorable.
\end{theorem}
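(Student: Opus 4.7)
The plan is to pass to an inclusion-minimal non-$L$-colorable subgraph, use the planar embedding to decompose the problem by the faces of $H$, and invoke a disk-type bound on critical graphs whose outer boundary is a prescribed walk.

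\textbf{Reduction to a critical subgraph.} Replace $G$ by an inclusion-minimal non-$L$-colorable subgraph $F$. By minimality, every $v\in V(F)\setminus V(H)$ has degree at least $5$ in $F$: otherwise an $L$-coloring of $F-v$ would extend using one of the at least five colors in $L(v)$. Fix a planar embedding of $F$ and view $H$ as a connected plane subgraph (we may assume $V(H)\subseteq V(F)$; isolated vertices of $H$ outside $F$ contribute nothing). Since $H$ is planar, $|E(H)|\le 3|V(H)|-6$, and its faces $\phi_1,\dots,\phi_k$ have boundary closed walks $W_1,\dots,W_k$ of lengths $\ell_i$ with $\sum_i\ell_i=2|E(H)|\le 6|V(H)|-12$. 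For each $i$, let $F_i$ be the subgraph of $F$ consisting of $W_i$ together with the portion of $F$ drawn strictly inside $\phi_i$; the vertices of $V(F_i)\setminus V(W_i)$ all have degree at least $5$ in $F_i$, and $F_i$ inherits a criticality property relative to the precoloring of $V(W_i)$ induced by an optimal attempt to color $H$.

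\textbf{Disk lemma.} The main technical step is the following: if $G'$ is a plane graph drawn in a closed disk whose boundary is a closed walk $W$ of length $\ell$, if $|L'(v)|\ge 5$ for every interior vertex, and if $G'$ is minimal such that some prescribed $L'$-coloring of $V(W)$ fails to extend, then $|V(G')|\le c\ell^2$ for a small absolute constant $c$. I would prove this by induction on $\ell$ in the spirit of Thomassen's proof of $5$-choosability: in a minimal counterexample one either finds a chord of $W$ splitting the disk into two smaller critical sub-instances, or identifies a reducible configuration near the boundary (typically an interior vertex of degree $5$ with several consecutive boundary neighbors, or a short separating cycle) whose removal shortens $W$ by a bounded amount while discarding only $O(\ell)$ interior vertices. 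Iterating from $\ell$ down to a trivial base case accumulates $O(\ell)$ vertices at each of $O(\ell)$ stages, yielding the $O(\ell^2)$ bound.

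\textbf{Summation.} Since each vertex of $V(F)\setminus V(H)$ lies in the open interior of a unique face $\phi_i$, applying the disk lemma to each $F_i$ and summing gives
\[
|V(F)|\ \le\ |V(H)|+c\sum_i\ell_i^2\ \le\ |V(H)|+c\Bigl(\sum_i\ell_i\Bigr)^2\ \le\ |V(H)|+36c\,|V(H)|^2,
\]
which is at most $8|V(H)|^2$ after tuning $c$ and checking the cases $|V(H)|\le 1$ by hand. The main obstacle will be the disk lemma: producing sharp reducible configurations for boundary walks that may repeat vertices or edges and whose precolorings can be adversarial is genuinely more delicate than in Thomassen's original setup, and pushing $c$ small enough to meet the coefficient $8$ requires a careful quantitative accounting of how much the boundary length drops per reduction.
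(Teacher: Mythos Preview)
Your overall architecture matches the paper's: pass to a minimal non-$L$-colorable subgraph containing $H$, decompose by the faces of $H$, apply a quadratic ``disk bound'' inside each face, and sum. The paper's disk lemma (Lemma~\ref{lemma-preouf}) yields roughly $|V(F_f)\setminus V(C)|\le 2|f|^2$, i.e.\ $c\approx 2$, and its proof---via a weight function and a fairly delicate induction tracking chords, vertices with many boundary neighbors, and ``relaxed'' boundary colorings---is more involved than your sketch suggests, though your outline points in the right direction.

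The genuine quantitative gap is in your summation step. The bound $\sum_i\ell_i^2\le(\sum_i\ell_i)^2\le 36|V(H)|^2$ would force $c\le 2/9$ to reach the constant $8$, and you will not get there by sharpening the disk lemma. The paper avoids this entirely by bounding $\sum_f(|f|^2-2)$ directly (Lemma~\ref{lemma-qsum}): for a connected plane graph on $n$ vertices this sum is at most $4n^2-8n+2$, with equality when $H$ is a tree (one face of length $2n-2$), and adding any edge can only decrease it since merging two faces of lengths $a,b\ge3$ into one of length $a+b-2$ satisfies $(a+b-2)^2-2\ge(a^2-2)+(b^2-2)$. With this observation and $c=2$ the final bound is exactly $8|V(H)|^2$. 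So drop the Cauchy--Schwarz-style estimate, insert this one-line lemma on face sizes, and stop trying to tune $c$.
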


Let us remark that the existence of such a subgraph of bounded size follows from \cite{kawmoh}, but our bound on the
size of $F$ is much better and gives a better estimate on the required distance in Problem~\ref{quest-albertson}.
During the publication process of this paper, Postle and Thomas~\cite{Postle2015} improved the bound of Theorem~\ref{thm-connsg}
to linear, and announced a stronger result~\cite{lukethe,PosThoHyperb} implying that this bound holds even for disconnected $H$;
the latter of course gives another proof of Theorem~\ref{thm-main} (with a different forbidden distance between precolored vertices).

In order to prove Theorem~\ref{thm-main}, we instead consider a more general statement allowing
some lists of smaller size.  For a plane graph $G$ and a face $f$ of $G$, let the \emph{boundary} of $f$ denote the subgraph of $G$
consisting of vertices and edges incident with $f$ (let us recall that a \emph{face} is a maximal connected subset of the plane after
removing the points contained in the drawing of $G$).
Let $G$ be a plane graph, let $P$ be a subpath of the boundary $H$ of the outer face of $G$, and let $X$ be a subset of $V(G)\setminus V(P)$.
For a positive integer $M$, a list assignment $L$ for $G$ is
\emph{$M$-valid with respect to $P$ and $X$} if
\begin{itemize}
\item
$|L(v)|=5$ for $v\in V(G)\setminus (V(H)\cup X)$,
\item
$3\le |L(v)|\le 5$ for $v\in V(H)\setminus (V(P)\cup X)$,
\item
$|L(v)|=1$ for $v\in X$,
\item
the subgraph of $G$ induced by $V(P)\cup X$ is $L$-colorable, and
\item
for every $v\in X$, the vertices of $V(G)\setminus \{v\}$ at distance at most $M$ from $v$ do
not belong to $P$ and have lists of size $5$.
\end{itemize}
If $X=\emptyset$ and $L$ is $0$-valid, we simply say that $L$ is \emph{valid with respect to $P$}.

A key ingredient for our proofs is the following well-known result of Thomassen~\cite{thomassen1994} regarding the coloring of planar graphs from lists of restricted sizes.

\begin{theorem}[\cite{thomassen1994}]\label{thm-thom}
If $G$ is a connected plane graph, $xy$ is an edge incident with the outer face of $G$,
and $L$ is a list assignment that is valid with respect to $xy$, then $G$ is $L$-colorable.
\end{theorem}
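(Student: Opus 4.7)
The plan is to apply Thomassen's original inductive strategy and prove the statement by induction on $|V(G)|$. It is convenient to first reduce to the standard set-up: since the edge $xy$ is $L$-colorable, pick specific colors $c_x\in L(x)$ and $c_y\in L(y)$ with $c_x\neq c_y$ and replace $L$ by the restriction where $L(x)=\{c_x\}$ and $L(y)=\{c_y\}$. Thus I may assume that $|L(x)|=|L(y)|=1$ with distinct colors, every other outer-face vertex has list of size at least $3$, and every interior vertex has list of size exactly $5$.

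The base case $|V(G)|\le 2$ is immediate. For the inductive step, I would first reduce to the case that $G$ is $2$-connected with outer face a cycle $C$: otherwise decompose $G$ at a cut vertex, color the block containing $xy$ first, and then apply induction to the remaining blocks with the cut vertex playing the role of a single precolored vertex (a path of length $0$ on the outer face, which falls under the same argument). Next, if $C$ has a chord $uv$ (disjoint from $\{x,y\}$), split $G$ along $uv$ into subgraphs $G_1\supseteq\{x,y\}$ and $G_2$; apply induction to $G_1$ to obtain colors of $u$ and $v$, which must differ since $uv$ is an edge, and then apply induction to $G_2$ with precolored edge $uv$.

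In the remaining case, $C$ is chordless. Let $v$ be the neighbor of $x$ on $C$ distinct from $y$ and enumerate the neighbors of $v$ in cyclic order around $v$ as $x=u_0,u_1,\ldots,u_k,u_{k+1}$, where $u_{k+1}$ is $v$'s other neighbor on $C$; chordlessness forces $u_1,\ldots,u_k$ to be interior vertices, each with a list of size $5$. Choose two colors $\alpha,\beta\in L(v)\setminus L(x)$ (possible since $|L(v)|\ge 3$ and $|L(x)|=1$) and set $L'(u_i)=L(u_i)\setminus\{\alpha,\beta\}$ for $1\le i\le k$, leaving all other lists unchanged. In $G-v$, the vertices $u_1,\ldots,u_k$ now lie on the outer face and have lists of size at least $3$, so the inductive hypothesis applies with precolored edge $xy$ (still on the outer face); extend the resulting coloring $\vf$ to $v$ by choosing whichever of $\alpha,\beta$ differs from $\vf(u_{k+1})$. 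This choice avoids $c_x$ on $x$ because $\alpha,\beta\notin L(x)$ and avoids $\vf(u_i)$ for $1\le i\le k$ because those colors were removed from $L'(u_i)$.

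The main delicacy is verifying the structure of the outer face of $G-v$: by $2$-connectedness of $G$ and absence of chords on $C$, the boundary walk of the outer face of $G-v$ is obtained from $C$ by replacing $v$ with the path $u_1,\ldots,u_k$, which is exactly the configuration the inductive hypothesis requires. The rest is bookkeeping: confirming that $L'$ satisfies all list-size requirements in $G-v$ and that extending by $\alpha$ or $\beta$ respects adjacencies with the previously colored $u_{k+1}$ on $C$ and with $x$.
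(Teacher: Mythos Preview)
The paper does not give its own proof of this theorem; it is quoted as a known result from Thomassen~\cite{thomassen1994} and used as a tool throughout. Your argument is precisely Thomassen's original induction, and it is correct. Two small remarks: your chord case assumes the chord is disjoint from $\{x,y\}$, but the same splitting works verbatim when the chord is incident with $x$ or $y$; and in the cut-vertex reduction you invoke the statement for a path of length~$0$, which is fine once you note that one can always promote a single precolored boundary vertex $c$ to a precolored edge by fixing any color in $L(d)\setminus\{\text{color of }c\}$ for a boundary neighbor $d$ of $c$.
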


There are arbitrarily large non-$L$-colorable graphs with this structure if we allow
a path of length two to be precolored (throughout the paper, the \emph{length} of the path
refers to the number of its edges, i.e., a path of length two has three vertices).
Thomassen~\cite{thom-2007} gave their complete description, see Lemma~\ref{lemma-extthom}.
In Theorem~\ref{thm-prepathw}, we deal with the more general case when $P$ has fixed length $k$.
In particular, we show that if $G$ is a minimal non-$L$-colorable graph satisfying the assumptions of Theorem~\ref{thm-prepathw},
then at most $k-2$ of its vertices incident with the outer face have lists of size at least four.
In conjunction with Theorem~\ref{thm-connsg}, this enables us to bound the size of such graphs
subject to the additional assumption that no two vertices with lists of size three are adjacent.

Next, we use the new approach to $5$-choosability of planar graphs developed in \cite{5choosfar}
to show that we can reduce the problem to the case that only one internal vertex is precolored.
Having established this fact, the following lemma gives the affirmative answer to Problem~\ref{quest-albertson}.

\begin{lemma}\label{lemma-albimp}
There exists a constant $M$ with the following property.
Let $G$ be a plane graph and let $H$ be the boundary of its outer face.  Let $P$ be a (possibly null) subpath of $H$ of length at most one.
For every $x\in V(G)\setminus V(P)$ and every list assignment $L$ that is $M$-valid with respect to $P$ and $\{x\}$
such that no two vertices with lists of size three are adjacent, the graph $G$ is $L$-colorable.
\end{lemma}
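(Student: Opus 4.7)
The plan is to argue by contradiction, combining Theorem~\ref{thm-connsg} applied in a ball around $x$ with Thomassen-style extensions of colorings in the complementary region. Let $M$ be a sufficiently large constant (to be determined at the end), and suppose $(G,L,P,x)$ is a counterexample to the lemma with $|V(G)|$ minimum. By Theorem~\ref{thm-thom} applied to $G-x$ (with $P$ as the precolored edge if $|P|=1$, or after precoloring an arbitrary outer-face edge if $|P|=0$), there is an $L$-coloring $\varphi$ of $G-x$. Since $G$ is not $L$-colorable, every such $\varphi$ satisfies $\psi(x)\in\varphi(N(x))$, where $\psi(x)$ denotes the unique color in $L(x)$; equivalently, setting $L^{*}(y)=L(y)\setminus\{\psi(x)\}$ for $y\in N(x)$ (so $|L^{*}(y)|=4$ by $M$-validity) and $L^{*}(v)=L(v)$ otherwise, $G-x$ is not $L^{*}$-colorable.

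The core step is to apply Theorem~\ref{thm-connsg} inside a ball around $x$. By $M$-validity, every vertex of $B:=B_{M}(x)$ other than $x$ has a list of size $5$ and lies outside $V(P)$, so the induced subgraph $G[B]$ satisfies the hypothesis of Theorem~\ref{thm-connsg} with $H=\{x\}$. Either (i) $G[B]$ is $L$-colorable, yielding a coloring $\varphi_{B}$, or (ii) Theorem~\ref{thm-connsg} produces a non-$L$-colorable subgraph $F\subseteq G[B]$ with $|V(F)|\le 8$; in case (ii), $F$ must contain $x$ (otherwise $F$ has all lists of size $5$ and is $L$-colorable by Thomassen), so $F\subseteq B_{7}(x)$, safely buffered from the outer face.

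In case (i), I would extend $\varphi_{B}$ to the exterior annular region $G-\mathrm{int}(B)$ by Thomassen's theorem, with inner boundary $\partial B$ precolored by $\varphi_{B}$ and $V(P)$ precolored by $\psi$; by choosing $\varphi_{B}$ judiciously among the many $L$-colorings of $G[B]$, the consistency constraint on $\partial B$ can be satisfied. In case (ii), I would exploit the $5$-list buffer $B\setminus B_{7}(x)$ of thickness at least $M-7$: since there are only finitely many critical structures $F$ on at most eight vertices with $x$ precolored, each admits at least one ``escape'' coloring of a short separating cycle around $F$ that is compatible with some $L$-coloring of $F$; a Thomassen-style extension through the annular exterior then produces a global $L$-coloring of $G$, contradicting the minimality of $(G,L,P,x)$.

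The principal obstacle is this annular-extension step, because Theorem~\ref{thm-thom} assumes a simply connected outer face whereas we must color a region bounded by two cycles. The intended workaround is to cut the annulus along a short path between the two boundaries (whose interior vertices have $5$-lists by $M$-validity) and apply Theorem~\ref{thm-thom} to the resulting simply connected graph, absorbing the consistency constraint along the cut into the flexibility furnished by the $5$-list buffer. Coordinating this cut with the outer-face assumptions---in particular the ``no two size-$3$ adjacent vertices'' condition and the control on size-at-least-four outer-face vertices supplied by Theorem~\ref{thm-prepathw}---is where the precise value of $M$ is ultimately fixed.
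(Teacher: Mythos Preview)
Your plan has a genuine gap in the annular-extension step, and this gap cannot be repaired along the lines you suggest.

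First, a minor point: case (ii) never occurs. The ball $G[B]$ is an abstract planar graph in which every vertex except $x$ has a $5$-list; re-embed it with $x$ on the outer face and apply Theorem~\ref{thm-thom} directly. So you are always in case (i), and your discussion of ``escape colorings of $F$'' (which is incoherent anyway, since a non-$L$-colorable $F$ has no $L$-coloring to escape from) is moot.

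The real problem is case (i). You want to fix an $L$-coloring $\varphi_B$ of $G[B]$ and extend it across the annulus $G-\mathrm{int}(B)$. After your cut, the resulting simply connected region has $\partial B$ on its outer boundary, and to be compatible with $\varphi_B$ you must treat all of $\partial B$ as precolored. But $|\partial B|$ is not bounded by any function of $M$: the ball of radius $M$ around $x$ can have arbitrarily many boundary vertices. So you are invoking Theorem~\ref{thm-thom} (precolored path of length one) or Theorem~\ref{thm-prepathw} (precolored path of bounded length) with a precolored path of unbounded length, which is exactly what those theorems do not provide. The phrase ``choosing $\varphi_B$ judiciously'' does not help: you have no structural control over the exterior near $\partial B$, and in particular you cannot rule out that every coloring of $\partial B$ fails to extend through the list-$3$/list-$4$ vertices on $H$. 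If instead you try the reverse order---color the cut exterior first by Thomassen with only $P$ precolored, then push inward---you face the dual obstruction: now $\partial B$ is a long precolored cycle bounding $G[B]$, and Lemma~\ref{lemma-preouf} shows there are $\partial B$-critical configurations inside for cycles of every length. The ``consistency along the cut'' (forcing $Q_L$ and $Q_R$ to receive the same colors) is a further unaddressed constraint.

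The paper's proof takes an entirely different route. It proves the stronger Lemma~\ref{lemma-albimps}: a direct minimal-counterexample analysis classifying \emph{all} $P$-critical plane graphs with one precolored interior vertex $x$ (under the hypothesis that $x$ has no list-$3$ neighbor), showing they form a short explicit list of obstructions (Figure~\ref{fig-albimpob}). Each obstruction has $x$ within distance $2$ of a vertex with list smaller than five, so $M=2$ already excludes them all; no ball-and-annulus decomposition is used. The argument proceeds by repeatedly stripping boundary vertices (the sets $Y$ and colorings $\vf$ from the proof of Lemma~\ref{lemma-albcorr}), applying Theorems~\ref{thm-5choosfar} and~\ref{thm-onefour} and Lemma~\ref{lemma-critface} to short separating cycles and $2$-chords, and doing case analysis on the resulting small configurations. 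This is precisely the kind of local structural work that your global decomposition tries to avoid, but the paper's approach is what actually closes the argument.
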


We first prove Theorem~\ref{thm-connsg}, in Section~\ref{sec-connsg}.  In Section~\ref{sec-size}, we prove
Theorem~\ref{thm-prepathw}.  In Section~\ref{sec-albimp}, we show that Lemma~\ref{lemma-albimp} implies our main result, Theorem~\ref{thm-main}.
Finally, in Section~\ref{sec-albertson}, we prove Lemma~\ref{lemma-albimp}.

Let us mention that we could also allow different kinds of ``irregularities'' other than just precolored vertices, for example,
precolored triangles or crossings, as long as the irregularity satisfies the condition analogous to Lemma~\ref{lemma-albimp}.
To keep the presentation manageable, we do not give proofs in this full generality and focus on the case of precolored single vertices.

\section{Critical graphs}
\label{sec-connsg}

To avoid dealing with irrelevant subgraphs, we define what a list-coloring critical graph means.
Let $G$ be a graph, let $T$ be a (not necessarily induced) subgraph of $G$ and let $L$ be a list assignment to the vertices
of $V(G)$.  For an $L$-coloring $\vf$ of $T$, we say that \emph{$\vf$ extends to an $L$-coloring of $G$} if there exists
an $L$-coloring of $G$ that matches $\vf$ on $V(T)$.  The graph $G$ is \emph{$T$-critical with respect to the list assignment $L$}
if $G\neq T$ and for every proper subgraph $G'\subset G$ such that $T\subseteq G'$,
there exists a coloring of $T$ that extends to an $L$-coloring of $G'$, but does not extend to an $L$-coloring of $G$.
If the list assignment is clear from the context, we shorten this and say that $G$ is \emph{$T$-critical}.
Note that $G$ is list-critical for the usual definition of criticality if and only if it is $\emptyset$-critical.
Let us also observe that every proper subgraph of a $T$-critical graph that includes $T$ is $L$-colorable, and that it may happen that $G$ is also $L$-colorable.

Let $G$ be a $T$-critical graph (with respect to some list assignment).
For $S\subseteq G$, a graph $G'\subseteq G$ is an \emph{$S$-component} of $G$ if $S$ is a proper subgraph of $G'$, $T\cap G'\subseteq S$ and all edges of
$G$ incident with vertices of $V(G')\setminus V(S)$ belong to $G'$.  For example, if $G$ is a plane graph with $T$
contained in the boundary of its outer face and $S$ is a cycle in $G$ that does not bound a face, then the subgraph of $G$ drawn inside the closed
disk bounded by $S$ (which we denote by $\cin_S(G)$) is an $S$-component of $G$.

Another important example of $S$-components comes from chords.
Given a graph $G$ and a cycle $K\subseteq G$, an edge $uv$ is a \emph{chord} of $K$ if $u,v\in V(K)$, but $uv$ is not an edge of $K$.
For an integer $k\ge 2$, a path $v_0v_1\ldots v_k$ is a \emph{$k$-chord\/} of $K$ if $v_0,v_k\in V(K)$ and
$v_1, \ldots, v_{k-1}\not\in V(K)$.  Suppose that $K$ bounds the outer face of a plane $T$-critical graph $G$,
where $T$ is a subpath of $K$.  Let the set $K'$ consist of $V(K)\setminus V(T)$ and of the endvertices of $T$.
Let $S$ be a chord or a $k$-chord of $K$ such that both its endvertices belong to $K'$, and let $c$ be a simple closed
curve in plane consisting of $S$ and a curve in the outer face of $G$ joining the endpoints of $S$,
such that $T$ lies outside the closed disk bounded by $c$.  The subgraph $G'$ of $G$ drawn inside the closed
disk bounded by $c$ is an $S$-component. We say that $G'$ is the subgraph of $G$ \emph{split off} by $S$.

We are also going to need a generalization of the construction from the preceding paragraph.
In the same situation, let $Q\neq T$ be a path in $G$ such that no internal vertex of $Q$ belongs to $K'$
and both endvertices belong to $K'$ (so $Q$ may possibly contain some edges and vertices of $T$, but $T\not\subseteq Q$).
We say that $Q$ is a \emph{span} of $G$.
Let $c$ be a simple closed curve in plane consisting of $Q$ and a curve in the outer face of $G$ joining the endpoints of $Q$,
such that $T$ is not contained in the closed disk bounded by $c$.  Again, the subgraph $G'$ of $G$ drawn inside the closed
disk bounded by $c$ is a $Q$-component, and we say that $G'$ is the subgraph of $G$ \emph{split off} by $Q$.

The $S$-components have the following basic property.

\begin{lemma}
\label{lemma-crs}
Let $G$ be a $T$-critical graph with respect to a list assignment $L$.  Let $G'$ be an $S$-component of $G$, for some $S\subseteq G$.
Then $G'$ is $S$-critical with respect to $L$.
\end{lemma}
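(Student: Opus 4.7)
The plan is a direct criticality-transfer argument: given any proper subgraph $G''\subsetneq G'$ with $S\subseteq G''$, I want to manufacture a proper subgraph $G^\ast\subsetneq G$ containing $T$, apply $T$-criticality of $G$ to obtain a precoloring of $T$ that extends to $G^\ast$ but not to $G$, and then pull this back to a precoloring of $S$ that extends to $G''$ but not to $G'$. The $G'\neq S$ part of $S$-criticality is immediate, since the definition of $S$-component requires $S$ to be a proper subgraph of $G'$.

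The construction is $G^\ast = (G-(V(G')\setminus V(G'')))\cup G''$; equivalently, delete from $G$ the ``internal'' vertices of $G'$ that are absent from $G''$, and all edges of $E(G')\setminus E(G'')$. Using the defining property of an $S$-component (all edges of $G$ incident with $V(G')\setminus V(S)$ lie in $G'$) together with $T\cap G'\subseteq S\subseteq G''$, one checks routinely that $G^\ast$ is a subgraph of $G$, that it is proper (because at least one vertex or edge of $G'\setminus G''$ has been removed and cannot reappear elsewhere), and that $T\subseteq G^\ast$. This is the bookkeeping step; it is straightforward but must be done carefully for both the vertex-deleted and edge-deleted cases of $G''\subsetneq G'$.

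Criticality of $G$ then yields an $L$-coloring $\psi$ of $T$ together with an extension $\vf$ of $\psi$ to $G^\ast$ such that $\psi$ does not extend to $G$. Let $\phi=\vf|_{V(S)}$. Since $V(G'')\subseteq V(G^\ast)$ and $E(G'')\subseteq E(G^\ast)$, the restriction $\vf|_{V(G'')}$ is an $L$-coloring of $G''$ extending $\phi$, so the ``extends to $G''$'' half is for free.

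The one step that needs real care is showing $\phi$ does not extend to an $L$-coloring of $G'$. The hard part is ruling out a mismatch on $V(G'')\setminus V(S)$: if $\phi$ did extend via some $\vf'$ on $G'$, I define a candidate coloring $\chi$ of $G$ by using $\vf'$ on $V(G')$ and $\vf$ on $V(G)\setminus V(G')$. These two sets partition $V(G)$, so $\chi$ is unambiguously defined. Propriety is then checked edge by edge using the $S$-component property: edges inside $G'$ are handled by $\vf'$; edges not in $E(G')$ have, by the $S$-component property, both endpoints in $V(G)\setminus(V(G')\setminus V(S))$, so I may read both endpoints via $\vf$ (using $\vf'|_{V(S)}=\phi=\vf|_{V(S)}$ to reconcile the two definitions on $V(S)$), and $\vf$ is a proper $L$-coloring of $G^\ast$ which contains all such edges. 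Because $\chi$ agrees with $\psi$ on $V(T)$, this contradicts $\psi$ not extending to $G$, completing the proof. The only genuine subtlety is the last compatibility verification on $V(S)$ edges and on $V(G'')\setminus V(S)$; once the partition $V(G)=V(G')\sqcup(V(G)\setminus V(G'))$ is used as the basis for $\chi$, it goes through cleanly.
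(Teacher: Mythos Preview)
Your argument is correct and follows the same gluing strategy as the paper: build a proper subgraph $G^\ast\supseteq T$ of $G$, invoke $T$-criticality to obtain a coloring $\vf$ of $G^\ast$ whose restriction to $T$ does not extend to $G$, and then paste $\vf$ together with a hypothetical extension of $\vf|_S$ over $G'$ to reach a contradiction. The paper streamlines the bookkeeping by first reducing to the case $G''=G'-e$ for a single edge $e\in E(G')\setminus E(S)$ (at the cost of a brief isolated-vertex case analysis), so that $G^\ast=G-e$; you instead handle arbitrary $G''$ directly, which is fine and even avoids the isolated-vertex digression. One small correction: your displayed formula $G^\ast=(G-(V(G')\setminus V(G'')))\cup G''$ does not actually agree with your prose description---when $V(G'')=V(G')$ but $E(G'')\subsetneq E(G')$ the formula yields $G^\ast=G$, which is not proper; your prose construction (also deleting the edges of $E(G')\setminus E(G'')$) is the correct one, and it is what your subsequent verification steps use.
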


\begin{proof}
If $G$ contains an isolated vertex $v$ that does not belong to $T$, then since $G$ is $T$-critical, we have that $L(v)=\emptyset$
and $T=G-v$.  Observe that if $G'$ is an $S$-component of $G$, then $S\subseteq T$ and $G'-v=S$, and clearly $G'$ is $S$-critical.

Therefore, we can assume that every isolated vertex of $G$ belongs to $T$.  Consequently, every isolated vertex of $G'$ belongs to $S$.
Suppose for a contradiction that $G'$ is not $S$-critical.  Then, there exists an edge $e\in E(G')\setminus E(S)$ such that
every $L$-coloring of $S$ that extends to $G'-e$ also extends to $G'$.  Note that $e\not\in E(T)$.
Since $G$ is $T$-critical, there exists a coloring $\psi$ of $T$ that extends to an $L$-coloring $\vf$ of $G-e$, but does not
extend to an $L$-coloring of $G$.  However, by the choice of $e$, the restriction of $\vf$ to $S$ extends to an $L$-coloring
$\vf'$ of $G'$.  Let $\vf''$ be the coloring that matches $\vf'$ on $V(G')$ and $\vf$ on $V(G)\setminus V(G')$.
Observe that $\vf''$ is an $L$-coloring of $G$ extending $\psi$, which is a contradiction.
\end{proof}

Lemma~\ref{lemma-crs} together with the following reformulation of Theorem~\ref{thm-thom}
enables us to apply induction to critical graphs.

\begin{lemma}\label{lemma-crittw}
Let $G$ be a plane graph with its outer face bounded by a cycle $H$ and let $L$ be a list assignment for $G$ such that $|L(v)|\ge 5$ for $v\in V(G)\setminus V(H)$. 
If $G$ is $H$-critical with respect to the list assignment $L$,
then either $H$ has a chord or $G$ contains a vertex with at least three neighbors in $H$.
\end{lemma}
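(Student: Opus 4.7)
The plan is to prove the contrapositive: assuming $H$ is chordless and no vertex of $G$ has three or more neighbors in $V(H)$, I will show that $G$ is not $H$-critical. Since the chordless cycle $H$ gives every $v\in V(H)$ exactly two neighbors in $V(H)$, the hypothesis reduces to the condition that each $v\in V(G)\setminus V(H)$ has at most two neighbors in $V(H)$. By $H$-criticality it suffices to show that an arbitrary $L$-coloring $\psi$ of $H$ extends to an $L$-coloring of $G$.

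If $V(G)=V(H)$, then $G\neq H$ forces the existence of some $e\in E(G)\setminus E(H)$; since both endpoints of $e$ lie in $V(H)$, the edge $e$ is a chord of $H$, contradicting our assumption. Otherwise, I define reduced lists $L'(v)=L(v)\setminus\psi(N_G(v)\cap V(H))$ for $v\in V(G)\setminus V(H)$. The hypothesis immediately gives $|L'(v)|\ge 3$, with $|L'(v)|\ge 5$ whenever $v$ has no $V(H)$-neighbor. It remains to $L'$-color $G-V(H)$.

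I handle each connected component $C$ of $G-V(H)$ separately, equipped with its inherited embedding. The key planarity observation is that every $v\in V(C)$ with a $G$-neighbor $w\in V(H)$ lies on the boundary of the outer face of $C$: since $V(H)$ bounds the outer face of $G$, the point $w$ lies in the unbounded region relative to $C$, so the Jordan arc representing the edge $vw$ must exit $v$ into $C$'s outer face. Consequently every non-outer-face vertex of $C$ keeps $|L'(v)|\ge 5$, and every outer-face vertex has $|L'(v)|\ge 3$. Truncating lists to size $5$ where needed and selecting any edge of $C$'s outer face as the precolored edge $xy$ (or coloring directly when $|V(C)|=1$), Theorem~\ref{thm-thom} supplies an $L'$-coloring of $C$.

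Combining these $L'$-colorings of all components of $G-V(H)$ with $\psi$ on $V(H)$ yields an $L$-coloring of $G$ extending $\psi$, contradicting $H$-criticality. The principal subtle point is the planarity observation placing vertices with $V(H)$-neighbors on the outer face of their component; the rest is a direct application of Thomassen's theorem after list reduction.
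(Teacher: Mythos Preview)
Your proof is correct and follows essentially the same approach as the paper: fix an $L$-coloring of $H$, delete $V(H)$, reduce the lists of the remaining vertices by the colors of their $H$-neighbors, and invoke Thomassen's theorem to color what is left. The paper phrases this as taking a non-extending coloring of $H$ and deducing from Theorem~\ref{thm-thom} that some vertex has reduced list of size at most $2$; you run the same argument in the forward direction and are in fact a bit more explicit about why the small-list vertices sit on the outer face of each component of $G-V(H)$.
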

\begin{proof}
Suppose that $H$ is an induced cycle.
Since $G$ is $H$-critical, there exists an $L$-coloring $\vf$ of $H$ that does not
extend to an $L$-coloring of $G$.  Let $L'$ be the list assignment for the graph $G'=G-V(H)$
obtained from $L$ by removing the colors of vertices of $H$ given by $\vf$ from the lists of their neighbors.
Since $\vf$ does not extend to $G$, it follows that $G'$ is not $L'$-colorable, and by Theorem~\ref{thm-thom},
there exists $v\in V(G')$ with $|L'(v)|\le 2$.  This implies that $v$ has at least three neighbors in $H$.
\end{proof}

Clearly, to prove Theorem~\ref{thm-connsg}, it suffices to bound the size of critical graphs.
It is more convenient to bound the \emph{weight\/} of such graphs, which is defined as follows.
\begin{definition}[Weights of faces and vertices]
Let $G$ be a plane graph, let $P$ be a subgraph of the boundary $H$ of the outer face $f_0$ of $G$, and let $L$ be a list assignment.
For a face $f$ of $G$, let $|f|$ denote the length of $f$ (if an edge is incident with the same face $f$ on both sides, it contributes $2$ to $|f|$).
The weight of $f$ is
$$\omega_{G,P,L}(f)=\begin{cases}
|f|-3&\text{if $f\neq f_0$}\\
0&\text{if $f=f_0$.}
\end{cases}$$
For a vertex $v$ of $G$, the weight is
$$\omega_{G,P,L}(v)=\begin{cases}
1&\text{if $v\in V(P)$ and $v$ is a cut-vertex of $G$}\\
0&\text{if $v\in V(P)$ and $v$ is not a cut-vertex of $G$}\\
|L(v)|-3&\text{if $v\in V(H)\setminus V(P)$}\\
0&\text{if $v\in V(G)\setminus V(H)$.}
\end{cases}$$
\end{definition}
In the cases where $G$, $P$ or $L$ are clear from the context, we drop the corresponding indices in $\omega_{G,P,L}$ notation.
\begin{definition}[Weight of a graph]
Let $G$ be a plane graph, let $P$ be a subgraph of the boundary of the outer face of $G$, and let $L$ be a list assignment.
We set
$$
    \omega_{P,L}(G)=\sum_{v\in V(G)} \omega_{G,P,L}(v) + \sum_{f\in F(G)} \omega_{G,P,L}(f),
$$
where the sums go over the vertices and faces of $G$, respectively.
\end{definition}

Let $\cal S$ be a set of proper colorings of $K$.  We say that $v\in V(K)$ is \emph{relaxed in $\cal S$} if there exist
two distinct colorings in $\cal S$ that differ only in the color of $v$.

\begin{lemma}
\label{lemma-preouf}
Let $G$ be a plane graph with its outer face bounded by a cycle $H$ and let $L$ be a list assignment for $G$ such that $|L(v)|\ge 5$ for $v\in V(G)\setminus V(H)$.
If $G$ is $H$-critical with respect to the list assignment $L$ and $G$ is not equal to $H$ with one added chord, then
$$\omega_{H,L}(G)+\frac{|V(G)\setminus V(H)|}{2|H|+2}\le |H|-9/2.$$
\end{lemma}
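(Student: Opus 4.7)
The plan is to prove the inequality by induction on $|V(G)|$, using Lemma~\ref{lemma-crittw} to force a useful structure on $G$ and Lemma~\ref{lemma-crs} to apply the inductive hypothesis to the resulting pieces. Since $G$ is $H$-critical and, by hypothesis, is not $H$ with a single added chord, Lemma~\ref{lemma-crittw} supplies either a chord of $H$ or an internal vertex with at least three neighbors on $H$. In the inductive step, any piece that happens to be a cycle with one added chord (the excluded configuration) is not handled by induction but by the direct computation $\omega(G_i)\le |H_i|-4$ (two internal faces of total length $|H_i|+2$, no internal vertices, no cut-vertices).

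Set $n=|V(G)\setminus V(H)|$. In the chord case, a chord $xy$ of $H$ splits $G$ into two subgraphs $G_1,G_2$ bounded by cycles $H_1,H_2$ with $|H_1|+|H_2|=|H|+2$, each $H_i$-critical by Lemma~\ref{lemma-crs}. Using $|H_i|\le |H|$ (so that $\tfrac{n_i}{2|H_i|+2}\ge \tfrac{n_i}{2|H|+2}$), the inductive bound (or $|H_i|-4$ in the exceptional subcase) on each piece combines to
$$\omega(G)+\tfrac{n}{2|H|+2}\;\le\; |H|-7+(\text{correction}),$$
well within $|H|-9/2$. In the $2$-chord case, list the neighbors $u_1,\ldots,u_k$ on $H$ of some internal vertex $v$ in cyclic order around $v$ ($k\ge 3$); the $2$-chords $u_i v u_{i+1}$ partition the interior of $H$ into $k$ regions bounded by cycles $C_i$ with $\sum_i|C_i|=|H|+2k$, and each enclosed $G_i$ is $C_i$-critical. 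With $n=1+\sum_i n_i$ and $|C_i|\le |H|$, the inductive bounds sum to
$$\omega(G)+\tfrac{n}{2|H|+2}\;\le\; |H|-\tfrac{5k}{2}+(\text{correction})\;\le\; |H|-\tfrac{15}{2}+(\text{correction}),$$
again safely below $|H|-9/2$.

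The main obstacle is justifying the ``correction'' terms above, i.e.\ the precise bookkeeping of how the weight $\omega$ decomposes under the split. One must reconcile the cut-vertex indicators on $V(H)$ with those on the new boundary cycles $H_i$ or $C_i$, since a vertex can be a cut-vertex of one but not the other, particularly at the endpoints of the chord or at $v$; in Case~2 the vertex $v$ contributes $0$ to $\omega(G)$ but may be counted as a cut-vertex in several $G_i$, producing an overcount that has to fit within the roughly $2.5$ to $3$ units of slack built into the inequality. The small base cases where induction does not apply are routine to dispatch, since the only $H$-critical graphs on very few vertices are forced to be $H$ plus a single chord (excluded by hypothesis) or else satisfy the bound trivially.
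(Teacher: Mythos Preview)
Your approach has a genuine gap in the $2$-chord case when $k=3$ and exactly two of the three cycles, say $C_1$ and $C_2$, bound faces of $G$ while the third encloses a nontrivial $C_3$-critical piece $G_3$. This configuration is not excluded by degree: since $\deg(v)\ge |L(v)|\ge 5$, the vertex $v$ simply has at least two further neighbors inside $G_3$. With $\sum|C_i|=|H|+6$ and $n=1+n_3$, your estimate becomes
\[
\omega(G)+\frac{n}{2|H|+2}\le (|C_1|-3)+(|C_2|-3)+\Bigl(|C_3|-\tfrac{9}{2}\Bigr)+\frac{1}{2|H|+2}=|H|-\tfrac{9}{2}+\frac{1}{2|H|+2},
\]
overshooting the target by exactly $\frac{1}{2|H|+2}$. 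The ``$2.5$ to $3$ units of slack'' you invoke is the gap $5k/2-9/2$ obtained only when \emph{all} $k$ pieces satisfy the full inductive bound $|C_i|-9/2$; a face piece contributes $|C_i|-3$ instead, costing $3/2$, and at $k=3$ two face pieces consume the entire slack.

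Worse, this bad case iterates: inside $G_3$ one can again find a vertex with three neighbors on $C_3$, two adjacent faces, and one nontrivial remainder, leaking another $\frac{1}{2|H|+2}$, and so on. The paper's proof confronts this directly by building a chain $G=G_0\supset G_1\supset\cdots\supset G_k$ of exactly these reductions and introducing the notion of a \emph{relaxed} boundary vertex (one at which two distinct non-extending boundary colorings differ) to show the chain has length at most $|H|$: each step creates a new relaxed vertex while preserving the old ones. This caps the accumulated leakage at $\frac{|H|+1}{2|H|+2}=\tfrac12$, which is precisely why the denominator in the statement is $2|H|+2$. Your induction does not close without this termination argument or something equivalent.

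As a side remark, your concern about cut-vertex indicators is misplaced. In this lemma the boundary is always a cycle, and in an $H$-critical graph with internal lists of size at least $5$, Theorem~\ref{thm-thom} forbids any internal block hanging off a single boundary vertex. Hence no boundary vertex is a cut-vertex, the weight reduces to $\sum_{f\ne H}(|f|-3)$, and this quantity is exactly additive under both of your splits; no correction arises from that source.
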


\begin{proof}
We proceed by induction. Assume that the lemma holds for all graphs having fewer edges than $G$.
For a subgraph $G'$ of $G$ with outer face bounded by a cycle $C$, let
$$
  \theta(G') = \omega_{C,L}(G') + \frac{|V(G')\setminus V(C)|}{2|H|+2}.
$$
Let $C\ne H$ be a cycle in $G$ such that $|C|\le |H|$. By Lemma \ref{lemma-crs}, $\cin_C(G)$ is $C$-critical with respect to $L$ unless $C$ bounds a face.
If $\cin_C(G)$ has at least four faces (including the outer one bounded by $C$), then the induction hypothesis applied to $\cin_C(G)$ implies that
\begin{eqnarray*}
\theta(\cin_C(G))&=&\omega_{C,L}(\cin_C(G)) + \frac{|V(\cin_C(G))\setminus V(C)|}{2|H|+2}\\
&\le&\omega_{C,L}(\cin_C(G)) + \frac{|V(\cin_C(G))\setminus V(C)|}{2|C|+2}\\
&\le& |C|-9/2.
\end{eqnarray*}
Observe that if $\cin_C(G)$ has three faces (i.e., consists of $C$ and its chord), then
$\theta(\cin_C(G)) = \omega_{C,L}(\cin_C(G)) = |C|-4$, and if $C$ bounds a face of $G$, then $\theta(\cin_C(G))=|C|-3$.

We construct a sequence $G_0\supset G_1\supset \ldots \supset G_k$ of subgraphs of $G$ with outer
faces bounded by cycles $H_0$, $H_1$, \ldots, $H_k$ such that for $0\le i\le k$, $G_i$ is $H_i$-critical and
\begin{equation}
   \omega_{H_i,L}(G_i) = \omega_{H,L}(G)-(|H|-|H_i|).
   \label{eq:H_i}
\end{equation}
We set $G_0=G$ and $H_0=H$.  Suppose that $G_i$ has already been
constructed.  If $H_i$ has a chord, or a vertex of $G_i$ has at least four neighbors
in $H_i$, then we set $k=i$ and stop.
Otherwise, by Lemma~\ref{lemma-crittw}, there is a vertex $v\in V(G_i)$ with three neighbors $v_1$, $v_2$
and $v_3$ in $H_i$. 
Let $C_1$, $C_2$ and $C_3$ be the three cycles of $H_i+\{v_1v,v_2v,v_3v\}$ distinct from $H_i$,
where $C_j$ does not contain the edge $vv_j$ ($j=1,2,3$).  If at most one of these cycles bounds a face of $G_i$,
then we set $k=i$ and stop.  Otherwise, assume that say $C_1$ and $C_3$ bound faces of $G_i$.
Let ${\cal S}_i$ be the set of $L$-colorings of $H_i$ that do not extend to an $L$-coloring of $G_i$.
If $v_2$ is relaxed in ${\cal S}_i$, then again set $k=i$ and stop.  Otherwise, let
$G_{i+1} = \cin_{C_2}(G_i)$ and let $H_{i+1}=C_2$ be the cycle bounding its outer face.

Note that in the last case, we have $|H_{i+1}| \le |H_i|$ and that
\begin{equation}
  |H_{i}| - |H_{i+1}| = (|C_1|-3) + (|C_3|-3).
\label{eq:Hi change}
\end{equation}
Furthermore, if $w\in V(H_{i+1})\setminus \{v\}$ is relaxed in ${\cal S}_i$, then it is also relaxed in ${\cal S}_{i+1}$.  This is
obvious if $w\neq \{v_1,v_3\}$.  Suppose that say $w=v_1$ and that $\vf_1,\vf_2\in {\cal S}_i$
differ only in the color of $v_1$.  Since $v$ has a list of size at least $5$, there exists a color
$c\in L(v)\setminus\{\vf_1(v_1),\vf_2(v_1),\vf_1(v_2),\vf_1(v_3)\}$.
Let $\vf'_1$ and $\vf'_2$ be the $L$-colorings of $H_{i+1}$ that match $\vf_1$ and $\vf_2$
on $H_i$ and $\vf'_1(v)=\vf'_2(v)=c$.  Then neither $\vf'_1$ nor $\vf'_2$ extends to an
$L$-coloring of $G_{i+1}$, showing that $v_1$ is relaxed in ${\cal S}_{i+1}$.  Similarly, $v$ is relaxed in ${\cal S}_{i+1}$,
since for an arbitrary $\vf\in {\cal S}_i$ (the set ${\cal S}_i$ is nonempty, since $G_i$ is $H_i$-critical),
there exist at least two possible colors for $v$ in $L(v)\setminus\{\vf(v_1),\vf(v_2),\vf(v_3)\}$,
giving two elements of ${\cal S}_{i+1}$ that differ only in the color of $v$.
We conclude that the number of non-relaxed vertices in ${\cal S}_{i+1}$ is smaller than
the number of non-relaxed vertices in ${\cal S}_i$ for every $i<k$, and
consequently, $k\le |H|$.

Lemma \ref{lemma-crs} implies that every $G_i$ is $H_i$-critical. It is also easy to see
by induction and using (\ref{eq:Hi change}) that (\ref{eq:H_i}) holds for $0\le i\le k$.
In each step in the construction of the sequence $(G_i,H_i)_{i=0}^k$,
the number $|V(G_i)\setminus V(H_i)|$ is decreased by 1. Thus, (\ref{eq:H_i}) implies that
\begin{equation}
   \theta(G) - \theta(G_k) = |H|-|H_k| + \frac{k}{2|H|+2}.
   \label{eq:Hk theta}
\end{equation}

Suppose that there exists a proper subgraph $G'\supset H_k$ of $G_k$ and a coloring $\vf\in {\cal S}_k$ that
does not extend to an $L$-coloring of $G'$.  We may choose $G'$ to be $H_k$-critical.
Note that
\begin{eqnarray*}
   \theta(G)&=&\frac{k}{2|H|+2}+(|H|-|H_k|)+\theta(G_k)\\
   &=&\frac{k}{2|H|+2}+(|H|-|H_k|)+\theta(G')+\sum_{f\in F(G')\setminus\{H_k\}} (\theta(\cin_f(G))-\omega(f)).
\end{eqnarray*}
By induction, $\theta(G')\le |H_k|-4$,
since $G'\ne H_k$. This implies that
all faces of $G'$ are shorter than $|H|$.  Since $G'$ is a proper subgraph of $G_k$, we have
$\theta(\cin_f(G))\le \omega(f)-1$ for at least one face $f$ of $G'$ by induction.  It follows that
$$\theta(G)\le 1/2+(|H|-|H_k|)+(|H_k|-4)-1=|H|-9/2,$$ as required.  Therefore, we can assume that every coloring in ${\cal S}_k$
extends to every proper subgraph of $G_k$ that includes $H_k$.

Let us now consider various possibilities in the definition of $G_k$.
If $v\in V(G_k)\setminus V(H_k)$ has exactly three neighbors
$v_1$, $v_2$ and $v_3$ in $H_k$ and $v_2$ is relaxed, then consider colorings
$\vf_1,\vf_2\in {\cal S}_k$ that differ only in the color of $v_2$.  The coloring $\vf_1$
extends to an $L$-coloring $\psi_1$ of $G_k-vv_2$.  Let $\psi_2$ be obtained from $\psi_1$
by changing the color of $v_2$ to $\vf_2(v_2)$, and note that $\psi_2$ is an $L$-coloring
of $G_k-vv_2$ extending $\vf_2$.  However, either $\psi_1(v)\neq \vf_1(v_2)$ or $\psi_2(v)\neq\vf_2(v_2)$,
hence either $\vf_1$ or $\vf_2$ extends to an $L$-coloring of $G_k$.  This is a contradiction,
since they both belong to ${\cal S}_k$.

Suppose now that $H_k$ has a chord $e=xy$ in $G_k$.  If $G_k=H_k+e$, then since $G$ is not $H$ with a single chord, we have $k>0$.
However, that implies that a vertex of $G_{k-1}$ has degree at most four and a list of size $5$, which is impossible in a critical graph.
It follows that $G_k\neq H_k+e$.  Since $G_k$ is $H_k$-critical, there exists a coloring $\varphi\in {\cal S}_k$
that extends to an $L$-coloring of $H_k+e$, i.e., $\varphi(x)\neq\varphi(y)$.  However,
every coloring in ${\cal S}_k$ extends to every proper subgraph of $G_k$ that includes $H_k$, and it follows that
$\varphi$ extends to an $L$-coloring of $G_k-e$.  This gives an $L$-coloring of $G_k$ extending $\varphi$, contradicting
the assumption that $\varphi\in {\cal S}_k$.  Therefore, we can assume that $H_k$ is an induced cycle in $G_k$.

It follows that a vertex $v\in V(G_k)\setminus V(H_k)$ either has at least four neighbors in $H_k$,
or three neighbors $v_1$, $v_2$ and $v_3$ in $H_k$ such that at most one of the cycles of $H_k+\{v_1v,v_2v,v_3v\}$
distinct from $H_k$ bounds a face.  Then $H_k$ has a $2$-chord $Q$ such that neither of the cycles $K_1$ and $K_2$ of $H_k\cup Q$
distinct from $H_k$ bounds a face.  For $i\in\{1,2\}$, let $G_i'=\cin_{K_i}(G)$.  Suppose first
that it is not possible to choose $Q$ so that neither $G_1'$ nor $G_2'$ is a cycle with one chord.
Since the middle vertex $v$ of $Q$ has degree at least $5$, this can only happen if $V(G_k)\setminus V(H_k)=\{v\}$
and $v$ has degree exactly $5$.  But then $k=0$, since otherwise $G_{k-1}$ would contain a vertex
of degree at most four with a list of size $5$, and we have $\theta(G)=|H|-5+\frac{1}{2|H|+2}<|H|-9/2$.

Finally, suppose that neither $G_1'$ nor $G_2'$ is a cycle with a chord.
By induction and (\ref{eq:Hk theta}), we have $\theta(G)\le \frac{k+1}{2|H|+2}+(|H|-|H_k|)+\theta(G_1')+\theta(G_2')\le 1/2+ (|H|-|H_k|)+|K_1|+|K_2|-9=1/2+ (|H|-|H_k|)+|H_k|-5=|H|-9/2$,
as required.
\end{proof}

To prove Theorem~\ref{thm-connsg}, we need the following simple observation regarding the sizes
of faces in a plane graph.
\begin{lemma}\label{lemma-qsum}
If $H$ is a connected plane graph with $n$ vertices, then
$$\sum_{f\in F(H)} (|f|^2-2)\le 4n^2-8n+2.$$
\end{lemma}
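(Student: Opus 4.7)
My plan is to prove the bound by induction on $|E(H)|$, with spanning trees providing the extremal case. For the base case, when $H$ is a tree on $n$ vertices, its unique face has length $2(n-1)$ (every edge is a bridge, so contributes $2$ to the boundary walk), and hence
\[
  \sum_{f \in F(H)} (|f|^2 - 2) = (2n-2)^2 - 2 = 4n^2 - 8n + 2,
\]
establishing equality; this also covers the small cases $n\le 2$ in which $H$ is automatically a tree.

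For the inductive step, suppose $H$ contains a cycle and pick an edge $e$ on that cycle. Since $e$ is a non-bridge, $H' = H - e$ is again a connected plane graph on $n$ vertices, and the two faces $f_1, f_2$ of $H$ incident with $e$ are distinct; in $H'$ they merge into a single face of length $|f_1| + |f_2| - 2$. Writing $a = |f_1|$ and $b = |f_2|$, a direct expansion gives
\[
   \sum_{f \in F(H)} (|f|^2 - 2) - \sum_{f \in F(H')} (|f|^2 - 2)
   = a^2 + b^2 - (a+b-2)^2 - 2 = -2\bigl((a-2)(b-2) - 1\bigr).
\]
Whenever $a, b \ge 3$, this quantity is $\le 0$, so $\sum_{F(H)}(|f|^2-2) \le \sum_{F(H')}(|f|^2-2)$, and the induction hypothesis applied to $H'$ finishes the argument.

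The main (modest) obstacle is maintaining the hypothesis $a,b\ge 3$ throughout the reduction. Since $H$ is (implicitly) simple, every face of $H$ has length at least $3$ when $n\ge 3$, and merging two such faces produces a face of length $a+b-2\ge 4$, so the condition is preserved as edges are stripped away. The process thus terminates at a spanning tree, where the bound is tight, completing the proof. Without the simplicity assumption, the claim is false (for instance, a digon on two vertices has $\sum=4>2=4n^2-8n+2$), so this face-length bookkeeping is genuinely needed.
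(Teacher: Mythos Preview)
Your proof is correct and follows essentially the same approach as the paper's: induction on the number of edges, with a tree as the base case and removal of a non-bridge edge in the inductive step, using $|f_1|,|f_2|\ge 3$ to compare the sums. The only difference is that you make the (necessary) simplicity assumption explicit and verify that the face-length bound is preserved under edge deletion, whereas the paper simply asserts $|f_1|,|f_2|\ge 3$ without comment.
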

\begin{proof}
We prove the claim by induction on the number of edges of $H$.  If $H$ is a tree,
then it has only one face of length $2n-2$ and the claim follows.
Otherwise, $H$ contains an edge $e$ such that $H-e$ is connected.
Let $f$ be the face of $H-e$ corresponding to two faces $f_1$ and $f_2$
of $H$ separated by $e$.  We have
\begin{eqnarray*}
|f|^2-2&=&(|f_1|+|f_2|-2)^2-2=|f_1|^2+|f_2|^2+2|f_1||f_2|-4|f_1|-4|f_2|+2\\
&\ge& (|f_1|^2-2)+(|f_2|^2-2),
\end{eqnarray*}
since $|f_1|,|f_2|\ge 3$.  Therefore,
$$\sum_{f\in F(H)} (|f|^2-2)\le \sum_{f\in F(H-e)} (|f|^2-2)\le 4n^2-8n+2$$
by the induction hypothesis.
\end{proof}

Theorem~\ref{thm-connsg} is now an easy corollary of Lemma~\ref{lemma-preouf}.

\begin{figure}
\begin{center}
\includegraphics[width=120mm]{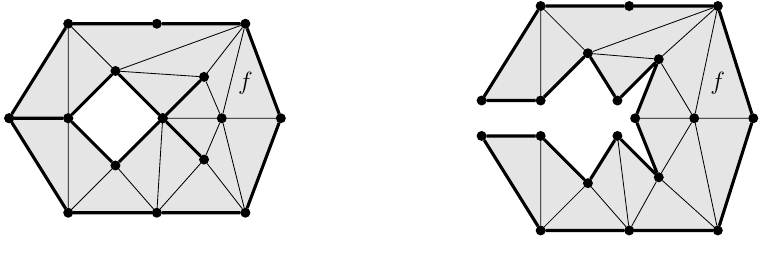}
\end{center}
\caption{Splitting the boundary of a face of $H$.
The boundaries of $f$ and the split cycle $C$ are shown by bold edges.}
\label{fig-splitface}
\end{figure}

\begin{proof}[Proof of Theorem~\ref{thm-connsg}]
Let $F$ be a minimal subgraph of $G$ including $H$ that is not $L$-colorable.
If $F=H$, then the conclusion of Theorem~\ref{thm-connsg} clearly holds.  Hence, assume that $F\neq H$,
and thus $F$ is $H$-critical.
Let $f$ be a face of $H$ and let $F'_f$ be the subgraph of $F$ drawn in the closure of $f$.  In $F'_f$, split the vertices of $f$
so that the interior of $f$ is unchanged and the boundary of $f$ becomes a cycle $C$.
The notion of ``splitting" should be clear from a generic example shown in
Figure \ref{fig-splitface}.  Let $F_f$ be the resulting graph and note that the length of $C$ is $|f|$.
Observe that if $V(F_f)\neq V(C)$, then $F_f$ is $C$-critical, and by Lemma~\ref{lemma-preouf},
\begin{equation}
  |V(F'_f)\setminus V(f)|=|V(F_f)\setminus V(C)|\le (2|f|+2)(|f|-9/2)\le 2(|f|^2-2).
  \label{eq:bound one face F_f}
\end{equation}
Note that the inequality~(\ref{eq:bound one face F_f}) holds when $V(F_f)=V(C)$ as well, since $|f|\ge 3$.
Summing (\ref{eq:bound one face F_f}) over all the faces of $H$, we conclude using Lemma~\ref{lemma-qsum} that $F$ contains at most
$8|V(H)|^2-16|V(H)|+4<8|V(H)|^2-|V(H)|$ vertices not belonging to $H$.  Therefore, $|V(F)|\le 8|V(H)|^2$.
\end{proof}

\subsection{An algorithmic remark}

Lemma~\ref{lemma-preouf} gives rise to a natural algorithm to enumerate all graphs with lists of size 5 critical
with respect to their outer face boundaries.  We proceed inductively by the length $k$ of the cycle $H$; hence
assume that we already know, up to isomorphism, the set ${\cal G}$ of all
planar graphs with precolored outer cycle of length at most $k-1$, such that
the internal vertices have lists of size at least five.  Let ${\cal H}_A$ be all graphs
consisting of a cycle of length $\le k$ with a chord and ${\cal H}_B$ the graphs
consisting of a cycle of length $\le k$ and a vertex with at least three neighbors in the cycle.
Let ${\cal H}_0'$ be the set of all graphs that can be obtained from the graphs in ${\cal H}_A\cup {\cal H}_B$
by pasting the graphs of ${\cal G}$ in some of the faces.  Let ${\cal H}_0$ be the subset of ${\cal H}_0'$
consisting of the graphs that are critical with respect to the boundaries of their outer faces.  For each graph in ${\cal H}_0$, keep adding a vertex
of degree three adjacent to three consecutive vertices of $H$, as long as the resulting graph is critical with
respect to the boundary of its outer face.  This way, we obtain all graphs critical with respect to the boundaries of their outer faces, and these outer faces have length $\ell$.
Lemma~\ref{lemma-preouf} guarantees that this algorithm will finish.  Note also that by omitting ${\cal H}_A$ in the
first step of the algorithm, we can generate such critical graphs whose outer cycle is chordless.

The main difficulty in the implementation is the need to generate all the possible lists in order to
test the criticality, which makes the time complexity impractical.  However, sometimes it
is sufficient to generate a set of graphs that is guaranteed to contain all graphs that are critical
(for some choice of the lists), but may contain some non-critical graphs as well.  To achieve this,
one may replace the criticality testing by a set of simple heuristics that prove that a graph is
not critical. For example, in an $H$-critical graph $G$, each vertex $v\in V(G)\setminus V(H)$ has degree
at least $|L(v)|$, and the vertices whose degrees match the sizes of the lists induce a subgraph $G'$
such that each block of $G'$ is either a complete graph or an odd cycle~\cite{vizing1976}.
There are similar claims forbidding other kinds of subgraphs with specified sizes of lists.  On the positive
side, to prove that a graph is $H$-critical, it is usually sufficient to consider the case that
all lists are equal.  By combining these two tests, we were able to generate graphs critical
with respect to the outer face boundary of length at most $9$.  If the outer face is bounded by an induced cycle,
then there are three of them for length $6$, six for length $7$, $34$ for length $8$ and $182$ for length $9$.
The program that we used can be found at
\texttt{http://atrey.karlin.mff.cuni.cz/\textasciitilde rakdver/5choos/}.

\section{Extending a coloring of a path}
\label{sec-size}

For a path $P$, we let $\ell(P)$ denote its length (the number of its edges).  A vertex of $P$ is an \emph{inside} vertex
if it is not an endvertex of $P$.  The main result of this section follows by using the same basic strategy as in Thomassen's proof of Theorem~\ref{thm-thom} \cite{thomassen1994}.

\begin{theorem}\label{thm-prepathw}
Let $G$ be a plane graph and let $P$ be a subpath of the boundary $H$ of its outer face.  Let $L$ be a list assignment valid
with respect to $P$.  If $G$ is $P$-critical with respect to $L$, then $\omega_{P,L}(G)\le \ell(P)-2$.
\end{theorem}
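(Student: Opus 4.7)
The plan is to induct on $|V(G)|+|E(G)|$, following Thomassen's template for proving Theorem~\ref{thm-thom}---color a carefully chosen outer vertex, delete it, and invoke the hypothesis on the remaining graph---while carefully tracking the weight $\omega_{P,L}$. The base cases are easy: if $\ell(P)\le 1$, Theorem~\ref{thm-thom} implies that no $P$-critical graph exists, so the bound holds vacuously; if $\ell(P)=2$, the $P$-critical graphs are classified by Lemma~\ref{lemma-extthom}, and a direct check gives $\omega_{P,L}(G)=0=\ell(P)-2$ on each such graph.

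In the inductive step, I would first reduce to the case that $H$ is a chord-free cycle and no vertex of $P$ is a cut-vertex of $G$. If $H$ has a chord $e=uv$ disjoint from $V(P)$, split $G$ along $e$ into $G_1\supseteq P$ and $G_2$; Lemma~\ref{lemma-crs} ensures that $G_1$ is $(P\cup\{e\})$-critical and $G_2$ is $e$-critical. Induction gives $\omega_{P\cup e,L}(G_1)\le \ell(P)-1$ and $\omega_{e,L}(G_2)\le -1$; the face of $G$ split by $e$ is replaced by two shorter faces whose $(|f|-3)$ contributions sum to the original, and adding the two inequalities yields the desired bound. Cut-vertices of $P$ are handled similarly: the two $-2$ savings from splitting $P$ into two subpaths dominate the $+1$ contribution of the cut-vertex. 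Once $H$ is induced and $P$ is cut-vertex-free, I would pick an endpoint $x$ of $P$, a neighbor $v$ of $x$ on $H\setminus V(P)$, and a coloring $\vf$ of $P$ that does not extend to $G$. Then choose $c\in L(v)\setminus\{\vf(x)\}$, delete $v$, and remove $c$ from the lists of the remaining $H$-neighbors of $v$. The resulting graph $G'$ with list assignment $L'$ has a precolored subpath $P'$ extending $P$ by one or two vertices (depending on whether $v$'s other $H$-neighbor lies on $P$); a standard verification shows $G'$ is $P'$-critical, so induction gives $\omega_{P',L'}(G')\le \ell(P')-2$. Translating back to $G$ requires accounting for the weight $|L(v)|-3\le 2$ lost at $v$, the face-length changes near $v$, the shift of $v$'s other $H$-neighbors from $V(H)\setminus V(P)$ into $V(P')$ (replacing their $|L|-3$ contributions by $0$), and any new cut-vertex that may appear on the outer face of $G'$.

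The main obstacle will be the intricate case analysis in this last step. The sub-case $|L(v)|=3$ is particularly delicate, because $c$ is essentially forced and can conflict with an adjacent 3-list vertex, requiring either a different choice of reduction vertex or an \emph{ad hoc} argument. Configurations in which $v$ lies on a triangular face with two $P$-vertices, or in which deleting $v$ creates a new chord or cut-vertex, also need individualized treatment. Once each sub-case is verified, the bound $\omega_{P,L}(G)\le \ell(P)-2$ will follow by combining the inductive bounds with the weight-accounting identities.
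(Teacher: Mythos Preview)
Your outline has two genuine gaps.

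First, invoking Lemma~\ref{lemma-extthom} for the base case $\ell(P)=2$ is circular: in the paper that lemma is proved \emph{after} Theorem~\ref{thm-prepathw} and its proof uses Theorem~\ref{thm-prepathw} (to force $2$-connectivity, triangular interior faces, and chordlessness of $H$). Thomassen's original result in~\cite{thom-2007} only produces a fan procession as a \emph{subgraph}; identifying it with $G$ already needs the weight bound you are trying to establish.

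Second, and more seriously, the sentence ``a standard verification shows $G'$ is $P'$-critical'' hides the entire difficulty. Deleting $v$ (or an edge) and adjusting lists yields a graph that is merely not $L'$-colorable for the fixed coloring $\psi$; to apply induction you must pass to a $P$-critical subgraph $G''\subsetneq G'$, and then the weight of $G$ decomposes into $\omega(G'')$ plus contributions from the pieces of $G$ sitting inside the internal faces of $G''$ and along the \emph{spans} of its outer boundary. Bounding those extra pieces cannot be done with the inductive hypothesis alone: one needs Lemma~\ref{lemma-preouf} for the interiors of faces and a separate inductive estimate on each span. The paper's proof is organized around exactly this point: it first proves (\refclaim{cl-nw-all}), via the span decomposition and Lemma~\ref{lemma-preouf}, that every $L$-coloring of $P$ extends to every proper subgraph of $G$; only with this in hand do the subsequent Thomassen-style reductions (remove the edge $p_0w$ when $|L(w)|\ge 4$; when $|L(w)|=3$, remove $w$ and delete the whole set $S=L(w)\setminus L(p_0)$ from the interior neighbours, then run the span analysis again on the resulting critical subgraph) preserve criticality and close the induction. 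Your proposal never touches Lemma~\ref{lemma-preouf} and treats the passage to a critical subgraph as routine, so the weight accounting cannot be completed. (The phrase ``$P'$ extending $P$ by one or two vertices'' is also off: in the correct argument $P$ never grows, and growing it would only weaken the bound you get from induction.)
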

\begin{proof}
Suppose for a contradiction that $G$ is a counterexample with the smallest number of edges,
and in particular that $\omega_{P,L}(G)\ge \ell-1$, where $\ell=\ell(P)$.
By Theorem~\ref{thm-thom}, we have $\ell\ge 2$.
Furthermore, Theorem~\ref{thm-thom} also implies that if either a vertex or two adjacent vertices
form a vertex-cut $R$ in $G$, then each component of $G-R$ contains a vertex of $P$.
Let $P=p_0p_1\ldots p_{\ell}$.  If $p_i$ is a cut-vertex for some $1\le i\le \ell-1$,
then $G=G_1\cup G_2$, where $G_1,G_2\neq \{p_i\}$ and $G_1\cap G_2=\{p_i\}$.
Let $P_1=P\cap G_1$ and $P_2=P\cap G_2$.  Since $G\neq P$, we can assume that $G_1\neq P_1$.
Note that if $G_2=P_2$, then $\omega_{P_2,L}(G_2)=\ell(P_2)-1$.
If $G_i\neq P_i$, then $G_i$ is $P_i$-critical by Lemma~\ref{lemma-crs}, for $i\in\{1,2\}$.
Furthermore, $p_i$ has weight $1$ in $G$ and weight $0$ both in $G_1$ and $G_2$.  By the minimality of $G$, we have
$\omega_{P,L}(G)=\omega_{P_1,L}(G_1)+\omega_{P_2,L}(G_2)+1\le (\ell(P_1)-2)+(\ell(P_2)-1) + 1 = \ell-2$.
Since $\omega_{P,L}(G)\ge \ell-1$, we conclude that $G$ is $2$-connected.

\begin{figure}
\begin{center}
\includegraphics[width=120mm]{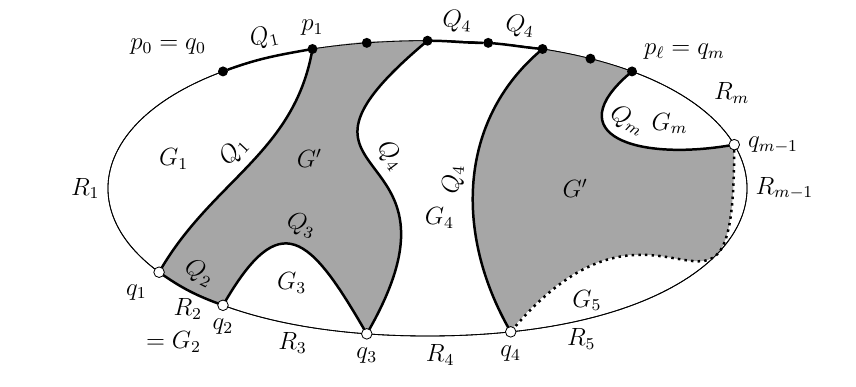}
\end{center}
\caption{Spans in a graph.}
\label{fig-span}
\end{figure}

Suppose that there exists a proper subgraph $G'\supseteq P$ of $G$ and an $L$-coloring $\psi$ of $P$
does not extend to an $L$-coloring of $G'$.  We may choose $G'$ to be $P$-critical.
By the minimality of $G$, we have $\omega_{P,L}(G')\le \ell-2$.
Let $H'$ be boundary of the outer face of $G'$ and let $W'$ be the walk such that the concatenation of
$W'$ and $P$ equals $H'$.  Since $G'$ is $P$-critical, Theorem~\ref{thm-thom}
implies that $W'$ is a path. Let $q_0,\dots,q_m$ be the vertices of $V(H)\cap V(W')$ that are not
internal vertices of the path $P$, listed in the order as they appear in $W'$, where $q_0$ and $q_m$ are the endvertices of $P$. Observe that $q_0,\dots,q_m$ appear in the same order also in $H$.
Each subwalk $Q_i$ of $W'$ from $q_{i-1}$ to $q_i$ ($i=1,\dots,m$) is a span, as defined in Section~\ref{sec-connsg}.
Note that $W'$ is the union of spans $Q_1, \ldots, Q_m$, and each of the
spans is a path. For $1\le i\le m$, let $R_i$ be the segment of $H$ from $q_{i-1}$ to $q_i$, and let $G_i$ be the subgraph of $G$ drawn inside the closed disk bounded by $R_i\cup Q_i$. See Figure~\ref{fig-span}
for an illustration.
Note that if $G_i=Q_i$, then $Q_i$ is an edge of $H$.
Observe that $\omega_{G',P,L}(v)\ge 1$
for each internal vertex $v$ of $Q_i$, since $v$ either has a list of size $5$ or it is a cut-vertex in $G'$.
Hence, the total weight in $G'$ of internal vertices of $Q_i$ is at least $\ell(Q_i)-1$.  On the other hand,
their weight in $G$ is $0$.
By the minimality of $G$, we have $\omega_{Q_i,L}(G_i)\le \ell(Q_i)-2$ if $Q_i$ is not equal to an edge of $H$.
If $Q_i$ is an edge of $H$, then $\omega_{Q_i,L}(G_i)=0=\ell(Q_i)-1$.
Consider now an internal face $f$ of $G'$ with boundary $B_f$.  If $f$ is also a face of $G$, then
$\omega_{B_f,L}(\cin_f(G))=|f|-3=\omega_{G',P,L}(f)$.  If $f$ is not a face of $G$, then Lemma~\ref{lemma-preouf} implies that
$\omega_{B_f,L}(\cin_f(G))\le |f|-\frac{9}{2}<|f|-3=\omega_{G',P,L}(f)$.  Since
$$G=G'\cup \bigcup_{i=1}^m G_i\cup \bigcup_{f\in F(G')} \cin_f(G),$$
the weight of $G$ can be obtained by summing the weights of the pieces and adjusting for the different weights of some
vertices in $G$.  More precisely,
\begin{eqnarray*}
  \omega_{P,L}(G) &\le& \omega_{P,L}(G')+\sum_{i=1}^m(\omega_{Q_i,L}(G_i)-(\ell(Q_i)-1)) + \\
   &&    \sum_{f\in F(G')} (\omega_{B_f,L}(\cin_f(G))-\omega_{G',P,L}(f))\\
   &\le& \omega_{P,L}(G') ~\le~ \ell-2.
\end{eqnarray*}
This is a contradiction which proves the following:

\claim{cl-nw-all}{For every proper subgraph $G'$ of $G$, every $L$-coloring $\psi$ of $P$ extends
to an $L$-coloring of $G'$.}

Let $\psi$ be an $L$-coloring of $P$ that does not extend to $G$.
If $L'$ is the list assignment such that $L'(v)=L(v)$ for $v\not\in V(P)$
and $L'(v)=\{\psi(v)\}$ for $v\in V(P)$, \refclaim{cl-nw-all} implies that $G$ is $P$-critical with respect to $L'$.
Note that $\omega_{P,L}(G) = \omega_{P,L'}(G)$ as the sizes of the lists of the vertices of $P$
are not affecting $\omega$.
Consequently, we can assume henceforth that $|L(v)|=1$ for every $v\in V(P)$.
If $V(H)=V(P)$, then by Lemma~\ref{lemma-preouf}, $\omega_{P,L}(G)=\omega_{H,L}(G)\le \ell-2$.
This is a contradiction, hence $p_0$ has a neighbor $w\in V(H)\setminus V(P)$.

If $|L(w)|\ge 4$, then let $L'$ be the list assignment obtained from $L$ by setting $L'(w)=L(w)\setminus L(p_0)$.
Note that $G' = G-p_0w$ is $P$-critical with respect to $L'$, and by the minimality of $G$, $\omega_{P,L'}(G')\le \ell-2$.
Let $f$ be the internal face of $G$ incident with $p_0w$.
Suppose that $u\in V(f)\setminus \{w,p_0\}$.
If $u$ belongs to $V(H)$, then $u$ is a cutvertex in $G'$, and as shown at the beginning of the proof, $u$ is an
internal vertex of $P$.  Therefore, $\omega_{G',P,L'}(u)=1$ and
$\omega_{G,P,L}(u)=0$. On the other hand, if $u\notin V(H)$, then $\omega_{G',P,L'}(u)=2$ and
$\omega_{G,P,L}(u)=0$. Using these facts we obtain a contradiction:
\begin{eqnarray*}
  \omega_{P,L}(G)
   &=& \omega_{P,L'}(G') + \omega_{G,P,L}(f) + 1 -
         \sum_{u\in V(f)\setminus\{w,p_0\}} (\omega_{G',P,L'}(u) - \omega_{G,P,L}(u)) \\
   &\le& \omega_{P,L'}(G') + (|f|-3) + 1 - (|f|-2) = \omega_{P,L'}(G') \le \ell-2.
\end{eqnarray*}

Next, consider the case that $|L(w)|=3$ and $w$ is adjacent to a vertex $p_i$ for some $1\le i\le\ell-1$.
Let $C$ be the cycle composed of $p_0wp_i$ and a subpath of $P$ and let $G'$ be the subgraph of $G$
obtained by removing all vertices and edges of $\cin_C(G)$ except for $p_iw$.
Let $P'=(P\cap G')+p_iw$.  Note that $G'$ is $P'$-critical with respect to $L$.
By the minimality of $G$ and Lemma~\ref{lemma-preouf}, we have
$$\omega_{P,L}(G)=\omega_{P',L}(G')+\omega_{C,L}(\cin_C(G))\le \ell(P')-2+|C|-3=\ell-2.$$
Suppose now that $w$ is adjacent to $p_{\ell}$.  Note that $wp_{\ell}$ is an edge of $H$ and $G\neq H$, hence Lemma~\ref{lemma-preouf} implies that
$\omega_{P,L}(G)=\omega_{H,L}(G)\le |H|-4=\ell-2$.  This is a contradiction.

Finally, suppose that $p_0$ is the only neighbor of $w$ in $P$.  Note that $L(p_0)\subset L(w)$, since
$G$ is $P$-critical. Furthermore, $w$ has only one neighbor $z\in V(H)$ distinct from $p_0$.
Let $S=L(w)\setminus L(p_0)$, $G'=G-w$ and let $L'$ be defined by $L'(v)=L(v)$ if $v$ is not a neighbor of $w$ or if $v=p_0$ or $v=z$, and $L'(v)=L(v)\setminus S$ otherwise.
Since $|S|=2$, $L'$ is a valid list assignment with respect to $P$.
Note that $G'$ is not $L'$-colorable,
as every $L'$-coloring of $G'$ can be extended to an $L$-coloring of $G$ by coloring $w$ using a color
from $S$ different from the color of $z$.  Let $G''$ be a $P$-critical subgraph of $G'$.
Let $Q_1$, \ldots, $Q_m$ be the spans in the boundary of the outer face of $G''$ and let $G_i$ be defined as in the proof
of \refclaim{cl-nw-all}, for $1\le i\le m$, where $w\in V(G_1)$.  The path $Q_1$ is an edge-disjoint
union of paths $M_1$, \ldots, $M_t$, where the endvertices of $M_j$ are neighbors of $w$ and
the internal vertices of $M_j$ are non-adjacent to $w$ for $1\le j\le t$ (with the exception that one of the endvertices
of $M_t$ does not have to be adjacent to $w$).  For $1\le j\le t$, let $C_j$ be the cycle or path formed by
$M_j$ and the edges between $w$ and $M_j$ and let $H_j$ be the subgraph of $G$ split off by $C_j$.
Note that if $v$ is an internal vertex of $M_j$, then $\omega_{G,P,L}(v)=0$ and $\omega_{G'',P,L'}(v)\ge 1$,
while endvertices of $M_j$ have the same weight in $G$ and in $G''$.  Furthermore, $\omega_{G,P,L}(w)=0$. By the minimality of $G$ and Lemma~\ref{lemma-preouf}, we have
$$\omega_{Q_1,L}(G_1)\le \sum_{j=1}^t \omega_{C_j, L}(H_j)\le \sum_{j=1}^t (\ell(M_j)-1).$$
Furthermore, $$\sum_{v\in V(Q_1)} \omega_{G'',P,L'}(v)-\omega_{G,P,L}(v)\ge \sum_{j=1}^t (\ell(M_j)-1)\ge \omega_{Q_1,L}(G_1).$$
We analyse the weights of the other pieces of $G-G'$ in the same way as in the proof of \refclaim{cl-nw-all}
and conclude that $\omega_{P,L}(G)\le\omega_{P,L'}(G'')$.  This contradicts the minimality of $G$
and finishes the proof of Theorem~\ref{thm-prepathw}.
\end{proof}

We need a more precise description of critical graphs in the case that $\ell(P)=2$.
There are infinitely many such graphs, but their structure is relatively simple and it is
described in the sequel.

\begin{figure}
\begin{center}
\includegraphics[width=120mm]{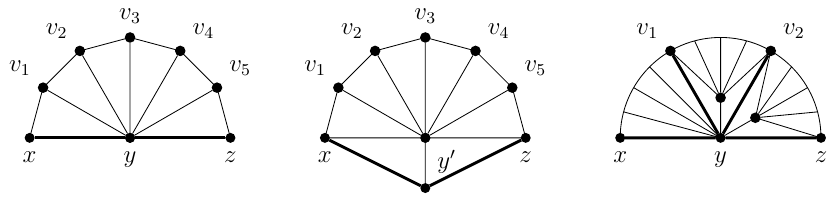}
\end{center}
\caption{A fan, a fat fan, and a fan procession.}
\label{fig-fanprocession}
\end{figure}

For an integer $n\ge 0$, a \emph{fan of order} $n$ with \emph{base} $xyz$ is the graph consisting of the path $xyz$, a path $xv_1\ldots v_nz$ and edges $yv_i$ for $1\le i\le n$.
For an integer $n\ge 1$, a \emph{fat fan of order $n$ with base $xyz$} is the graph consisting of the path $xyz$, a vertex $y'$ adjacent
to $x$, $y$ and $z$, and a fan of order $n$ with base $xy'z$.  A \emph{fan procession with base $xyz$} is
a graph consisting of the path $xyz$, vertices $v_1$, \ldots, $v_{k-1}$ (for some $k\ge 1$) adjacent to $y$, and subgraphs
$G_1$, \ldots, $G_k$ where for $1\le i\le k$, $G_i$ is either a fan or a fat fan with base $v_{i-1}yv_i$ (where we set $v_0=x$ and $v_k=z$).
Each fan or fan procession is a planar near-triangulation, and we consider its unique face of
size $\ge 4$ to be the outer face. See Figure~\ref{fig-fanprocession}.
A fan procession is \emph{even} if all constituent fat fans have even order.
A list assignment $L$ for a fan procession $G$ with base $xyz$ and outer face bounded by $H$ is \emph{dangerous}
if $|L(v)|=3$ for all $v\in V(H)\setminus \{x,y,z\}$ and $|L(v)|=5$ for all $v\in V(G)\setminus V(H)$.

Consider a fat fan $G$ of order $n>0$ with base $xyz$ and a valid list assignment $L$ (with respect to the path $xyz$).  Let $y'$ be the common neighbor of $x$, $y$ and $z$,
and let $v_1v_2\ldots v_n$ be the subpath of the outer face boundary from the definition of a fat fan.
Suppose that $G$ is not $L$-colorable, and let $\vf$ be an $L$-coloring of $xyz$.
It is easy to see that the list assignment $L$ must be dangerous.
Let $S=L(y')\setminus \{\vf(x),\vf(y),\vf(z)\}$.  If there exists $c\in S$ and $1\le i\le n$ such that $c\not\in L(v_i)$,
then $\vf$ extends to an $L$-coloring of $G$ assigning the color $c$ to $y'$.  Therefore, we have $S\subseteq L(v_i)$ for $1\le i\le n$.
Similarly, we have $\vf(x)\in L(v_1)$ and $\vf(z)\in L(v_n)$.  Since $\vf(x)\not\in S$ and
$S\cup \{\vf(x)\}\subseteq L(v_1)$, we have $|S|=2$, $\{\vf(x),\vf(y),\vf(z)\}\subset L(y')$ and $\vf(x)\neq \vf(z)$.
Observe also that $n\ge 2$, as otherwise $y'$ has degree four.  Therefore, $\{\vf(x)\}=L(v_1)\setminus L(v_n)$,
$\{\vf(z)\}=L(v_n)\setminus L(v_1)$ and $\{\vf(y)\}=L(y')\setminus (L(v_1)\cup L(v_n))$.
Therefore, there exists at most one precoloring of $xyz$ that does not extend to an $L$-coloring of $G$.  Furthermore,
if the order $n$ of $G$ is odd, then we can color $y'$ by a color from $S$ and the vertices $v_1$, $v_3$, \ldots, $v_n$
by the other color from $S$ and extend this to an $L$-coloring of $G$.  Therefore, the order of the fat fan $G$ is even.

Using this analysis, it is easy to see that the following holds:

\begin{observation}\label{obs-procprop}
Let $G$ be a fan procession with base $xyz$ and $L$ a dangerous list assignment for $G$. If $\vf_1$ and $\vf_2$ are precolorings of $xyz$ that do not extend to an $L$-coloring
of $G$, and $\vf_1(x)=\vf_2(x)$ and $\vf_1(y)=\vf_2(y)$, then $\vf_1=\vf_2$.  Furthermore,
if there exist two different precolorings of $xyz$ that do not extend to an $L$-coloring of $G$, then $G$ is a fan.
\end{observation}

Conversely, a result of Thomassen~\cite{thom-2007} implies that even fan processions with dangerous list assignments are the only plane graphs with valid list assignments that are $P$-critical for a path $P$ of length two.

\begin{lemma}
\label{lemma-extthom}
Let $G$ be a plane graph, let $H$ be the boundary of the outer face of $G$, and let $P$ a subpath of $H$ of length two.  Let $L$ be a list assignment valid
with respect to $P$.  If $G$ is $P$-critical with respect to $L$, then $G$ is an even fan procession with base $P$ and $L$ is dangerous.
\end{lemma}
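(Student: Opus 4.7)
The plan is to combine Theorem~\ref{thm-prepathw} with the fat-fan analysis preceding \refclaim{cl-procprop} and Thomassen's structural characterization from~\cite{thom-2007}. First, I apply Theorem~\ref{thm-prepathw} with $\ell(P)=2$ to obtain $\omega_{P,L}(G)\le 0$. Since every summand in $\omega_{P,L}(G)$ is nonnegative, each individual contribution must vanish. This immediately yields three structural conclusions: every internal face has length exactly three (so $G$ is a near-triangulation), every vertex of $V(H)\setminus V(P)$ has list of size exactly three (so $L$ is dangerous), and no vertex of $V(P)$ is a cut-vertex of $G$. Combined with the $2$-connectivity of $G$ already verified at the start of the proof of Theorem~\ref{thm-prepathw}, it follows that the outer face $H$ is bounded by a cycle.

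Next, I would appeal to Thomassen's complete description in~\cite{thom-2007} of $P$-critical dangerous near-triangulations whose precolored path has length two; that class coincides exactly with fan processions with base $P$. Alternatively, one can give a self-contained induction on $|V(G)|$: the base case is a fan of small order, and for the inductive step one considers the internal triangle incident with the edge $p_0p_1$, or more generally follows the sequence of triangles around $p_1$ until one reaches either $p_2$, an outer vertex yielding a $2$-chord at $p_1$, or a pattern that forces a fat-fan apex $y'$ adjacent to all three of $p_0,p_1,p_2$. In the $2$-chord case, Lemma~\ref{lemma-crs} implies that the two pieces split off are themselves $P'$-critical for suitable paths $P'$ of length two, so induction identifies each piece as an even fan procession and the pieces reassemble along the common neighbor of $p_1$ into the desired procession of $G$.

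Finally, to verify that the fan procession is \emph{even}, I would directly apply the fat-fan analysis carried out in the paragraph preceding \refclaim{cl-procprop}: a fat fan of odd order admits, for every precoloring of its base, an explicit extension to a proper $L$-coloring obtained by alternating the two surplus colors in $S$ along the top path and completing at $y'$. Hence each constituent fat fan, inheriting criticality via Lemma~\ref{lemma-crs}, must have even order, so the procession is even. The main obstacle in a self-contained treatment is the splitting step: precisely locating the $2$-chord or separating configuration that isolates a single fan or fat fan without destroying criticality of either piece, which is why directly invoking Thomassen's characterization from~\cite{thom-2007} is the cleanest route.
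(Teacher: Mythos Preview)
Your approach is essentially the same as the paper's: deduce from Theorem~\ref{thm-prepathw} that $\omega_{P,L}(G)\le 0$, hence $G$ is a $2$-connected near-triangulation with all vertices of $V(H)\setminus V(P)$ having lists of size three, then invoke Thomassen's result from~\cite{thom-2007}, and finally use the fat-fan parity analysis for evenness.

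One point deserves care. The paper reads Thomassen's Theorem~3 as producing a fan procession $G'\subseteq G$ (an obstruction witnessing non-extendability), not as a direct characterization of $P$-critical graphs. It then needs two extra ingredients to conclude $G=G'$: Lemma~\ref{lemma-preouf} shows every triangle of $G$ bounds a face (so the internal triangles of $G'$ are faces of $G$), and Theorem~\ref{thm-thom} forces every chord of $H$ to pass through the middle vertex of $P$ (so the outer boundary of $G'$ coincides with $H$). Your sentence ``that class coincides exactly with fan processions'' skips this identification step. If you are citing Thomassen's theorem in the stronger form that already classifies the critical graphs, you should say so explicitly; otherwise, insert the no-separating-triangle and chord arguments as the paper does. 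Your alternative self-contained induction is plausible but, as you yourself note, the splitting step is delicate, and the paper avoids it entirely by leaning on~\cite{thom-2007}.
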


\begin{proof}
By Theorem~\ref{thm-prepathw}, $G$ is $2$-connected, all faces other than the outer one are triangles and all vertices in $V(H)\setminus V(P)$ have lists of size three.
Since $G$ is $P$-critical, there exists an $L$-coloring of $P$ that does not extend to an $L$-coloring of $G$.
By Theorem 3 of \cite{thom-2007}, there exists a fan procession $G'\subseteq G$ with base $P$ and $L$ is a dangerous list assignment for $G'$.
By Lemma~\ref{lemma-preouf}, every triangle in $G$ bounds a face.  Furthermore, Theorem~\ref{thm-thom} implies that every chord of $H$ is
incident with the middle vertex of $P$.  We conclude that $G=G'$, and thus $G$ is a fan procession with base $P$.  Furthermore, since an $L$-coloring of $P$
does not extend to an $L$-coloring of $G$, the fan procession is even, as we have argued before.
\end{proof}

\section{Reducing the precolored vertices}
\label{sec-albimp}

One could hope that the proof of Theorem~\ref{thm-prepathw} can be modified to deal with the case
that $G$ contains sufficiently distant precolored vertices.  Most of the inductive applications deal
with the situations which reduce the length of the precolored path, and if the distance between the new
precolored path (one of the spans) from the old one is guaranteed to be bounded by a constant, we could ensure that the
distance between $P$ and the precolored vertices is at least some function of $\ell(P)$.  However, the fact
that there are infinitely many critical graphs makes it difficult to prove such a constraint on
the distance.

To avoid this problem, we restrict ourselves to working with list assignments such that the vertices with
lists of size three form an independent set (almost---for technical reasons, we allow one edge joining vertices
with lists of size three).  In this setting, we easily conclude by combining Theorem~\ref{thm-prepathw} with
Lemma~\ref{lemma-preouf} that the size of critical graphs is bounded.

\begin{lemma}\label{lemma-boundsize}
Let $G$ be a plane graph, let $H$ be the boundary of its outer face, let $P$ be a subpath of $H$ and let $L$ be a list assignment valid
with respect to $P$, such that $G$ contains at most one edge joining vertices with lists of size three.
If $G$ is $P$-critical, then $|V(G)|\le 8\ell(P)^2+4\ell(P)$.
\end{lemma}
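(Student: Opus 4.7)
The plan is to combine Theorem~\ref{thm-prepathw} with Lemma~\ref{lemma-preouf}. I would first apply Theorem~\ref{thm-prepathw} to get $\omega_{P,L}(G)\le \ell(P)-2$. By the analysis at the very start of the proof of Theorem~\ref{thm-prepathw}, every cut-vertex of $G$ must lie in the interior of $P$ (otherwise some side of the cut contains no vertex of $P$, contradicting Theorem~\ref{thm-thom}). Splitting $G$ at such a cut-vertex $p_i$ into subgraphs $G_1,G_2$ yields, via Lemma~\ref{lemma-crs}, two $P_j$-critical graphs with $P=P_1\cup P_2$; inducting on $|V(G)|$ and using $\ell(P_1)^2+\ell(P_2)^2\le \ell(P)^2$ then reduces me to the case that $G$ is $2$-connected, so its outer face $H$ is bounded by a cycle. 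I write this cycle as $H=P\cup Q$ where $Q$ is the path in $H$ from $p_0$ to $p_\ell$ avoiding the interior of $P$, so $|V(H)|=\ell(P)+\ell(Q)$.

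Next I would bound $|V(H)|$ linearly in $\ell(P)$ using the hypothesis that no two vertices with lists of size three are adjacent. The $\ell(Q)-1$ interior vertices of $Q$ form a subpath, and the size-three vertices among them form an independent set on this subpath, so there are at most $\lceil(\ell(Q)-1)/2\rceil$ of them; the remaining at least $\lfloor(\ell(Q)-1)/2\rfloor$ vertices all have list size at least $4$ and thus each contribute at least $1$ to the summation $\sum_{v\in V(H)\setminus V(P)}(|L(v)|-3)$, which is a part of $\omega_{P,L}(G)$. Combined with the weight bound, this gives $(\ell(Q)-2)/2\le \ell(P)-2$, hence $\ell(Q)\le 2\ell(P)-2$ and $|V(H)|\le 3\ell(P)-2$.

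Finally, I would verify that $G$ is also $H$-critical and apply Lemma~\ref{lemma-preouf}. For criticality: given any proper subgraph $G'\subseteq G$ containing $H$, the $P$-criticality of $G$ produces a coloring $\psi$ of $P$ that extends to an $L$-coloring $\varphi$ of $G'$ but not to $G$; the restriction of $\varphi$ to $H$ is then an $H$-coloring with the same property, since its restriction to $P$ is $\psi$. With $H$ a cycle and $|L(v)|\ge 5$ for every $v\in V(G)\setminus V(H)$, Lemma~\ref{lemma-preouf} gives $|V(G)\setminus V(H)|\le (2|V(H)|+2)(|V(H)|-9/2)$, and combining with $|V(H)|\le 3\ell(P)-2$ gives a bound quadratic in $\ell(P)$. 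The main obstacle will be pinning down the explicit constant $8$: a direct combination gives a larger coefficient, so one must also exploit the trade-off that when $\ell(Q)$ (and thus $|V(H)|$) is near its maximum, the internal-face weight $W_F=\sum_{f\ne H}(|f|-3)$ must be correspondingly small, which sharpens the Lemma~\ref{lemma-preouf} bound through the $\omega_{H,L}(G)$-term on its left-hand side.
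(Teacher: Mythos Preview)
Your overall strategy---reduce to the $2$-connected case, bound $|V(H)|$ linearly in $\ell(P)$ via the independence hypothesis, then apply Lemma~\ref{lemma-preouf}---is sound and does yield a quadratic bound. However, the trade-off you propose at the end goes in the wrong direction and cannot recover the constant~$8$. In Lemma~\ref{lemma-preouf} the quantity $\omega_{H,L}(G)=W_F$ sits on the \emph{left-hand side}, so to sharpen the bound on $|V(G)\setminus V(H)|$ you would need $W_F$ to be \emph{large}. What your inequality $W_V+W_F\le \ell(P)-2$ actually gives when $\ell(Q)$ is near its maximum is that $W_V$ is large and hence $W_F$ is \emph{small}---which only weakens the conclusion. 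Plugging $|V(H)|\le 3\ell(P)-2$ directly into Lemma~\ref{lemma-preouf} yields a bound of order $18\ell(P)^2$, and the trade-off cannot close this gap.

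The paper avoids this issue by a different construction: rather than applying Lemma~\ref{lemma-preouf} to $G$ itself, it builds an auxiliary graph $G'$ by adding a \emph{new} external path $Q$ of length $n_3+2$ joining the endpoints of $P$, together with edges from each $v\in V(H)\setminus V(P)$ to $5-|L(v)|$ vertices of $Q$ (with fresh colors in their lists). In $G'$ every boundary vertex now has list size five, and $G'$ is $(P\cup Q)$-critical for the full cycle $P\cup Q$. The crucial gain is that this cycle has length $\ell(P)+n_3+2$, and since $n_3\le n_4+n_5+1\le \omega_{P,L}(G)+1\le \ell(P)-1$, one gets $|P\cup Q|\le 2\ell(P)+1$. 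Applying Lemma~\ref{lemma-preouf} to $G'$ then gives $|V(G)|\le 2(|P\cup Q|-1)^2\le 8\ell(P)^2$. The point is that the list-size-three vertices, which inflate your cycle $H$, are absorbed into the short artificial path rather than remaining on the boundary.
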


\begin{proof}
By induction, we can assume that no cut-vertex belongs to $P$, and thus $G$ is $2$-connected.
The claim is true if $V(G)=V(P)$, thus assume that $V(G)\neq V(P)$.
For $i\in \{3,4,5\}$, let $n_i$ denote the number of vertices with lists of size $i$ in $V(H)\setminus V(P)$.
We have $\omega_{P,L}(G)\ge n_4+2n_5$.
Let $Q$ be a path of length $n_3+2$ whose endvertices coincide with the endvertices of $P$, but is otherwise disjoint from $G$,
and let $G'$ be the graph obtained from $G\cup Q$ by joining each vertex $v\in V(H)\setminus V(P)$ with
$5-|L(v)|$ vertices of $Q$ in the planar way.  Let $L_Q$ be the list assignment to the internal vertices of $Q$ such that
each such vertex has a single color that does not appear in any other list (including the lists of vertices of $G$).
Let $L'$ be the list assignment for $G'$ that matches $L_Q$ on the internal vertices of $Q$ and the list of each vertex
$v\in V(G)\setminus V(P)$ consists of $L(v)$ and the colors of the adjacent internal vertices of $Q$.  Observe that $G'$ is $(P\cup Q)$-critical,
and by Lemma~\ref{lemma-preouf},
$$
   \frac{|V(G)\setminus V(P)|}{2|P\cup Q|+2}=\frac{|V(G')\setminus V(P\cup Q)|}{2|P\cup Q|+2}\le |P\cup Q|-9/2.
$$
Letting $y=|P\cup Q|$, this implies that $|V(G)\setminus V(P)|\le 2y^2-7y-9$, and therefore $|V(G)|\le 2y^2-6y-9$.
Since $L$ is valid, since at most one edge joins vertices with lists of size three,
and since $G$ is $2$-connected, we have $n_3\le n_4+n_5+2$.  Consequently, $\ell(Q)=n_3+2\le n_4+n_5+4\le \omega_{P,L}(G)+4$.
Since $\omega_{P,L}(G)\le \ell(P)-2$ by Theorem~\ref{thm-prepathw}, we have that $y=|P\cup Q|\le 2\ell(P)+2$, and the claim follows.
\end{proof}

Let us remark that a converse of the transformation described in the proof of Lemma~\ref{lemma-boundsize} can be used
to generate all critical graphs satisfying the assumptions of the lemma with the length of $P$ fixed.

Our aim in this section is to show that Lemma~\ref{lemma-albimp} implies a positive answer to Problem~\ref{quest-albertson} (Theorem~\ref{thm-main}).
For technical reasons, we will prove a somewhat convoluted strengthening of this claim, Lemma~\ref{lemma-albcorr} below. In the proof of Theorem~\ref{thm-main},
this Lemma is applied with $p$ being one of the precolored vertices and $X$ consisting of the remaining ones; it gives a conclusion that
that $X=\emptyset$, i.e., only one vertex is precolored, and we can finish the proof using Theorem~\ref{thm-thom} (see the end of the paper
for a precise proof).  We need to introduce several technical definitions.

Let $G$ be a plane graph, let $H$ be the boundary of its outer face, and let $Q$ be a path in $G$.  Suppose that $Q=q_0q_1\ldots q_k$ and $q_0\in V(H)$.
For $0< i < k$, let $L_i$ and $R_i$ be the sets of edges of $G$ incident with $q_i$ such that the cyclic clockwise order (according to the drawing
of $G$ in the plane) of the edges incident with $q_i$ is $q_iq_{i+1}$, $R_i$, $q_iq_{i-1}$, $L_i$.  We define $L_0$ and $R_0$
similarly, except that we consider the outer face instead of the edge $q_iq_{i-1}$ in the order.
We define $G^Q$ as the graph obtained from $G$ by splitting the vertices along $Q$ in
the natural way, i.e., so that $Q$ corresponds to paths $Q_L=q_0^Lq_1^L\ldots q_{k-1}^Lq_k$ and $Q_R=q_0^Rq_1^R\ldots q_{k-1}^Rq_k$
and for $0\le i<k$, the vertex $q_i^L$ is incident with the edges in $L_i$ and the vertex $q_i^R$ is incident with the edges in $R_i$.
If $G$ is given with a list assignment $L$, then let $L^Q$ be the list assignment for $G^Q$ such that $L^Q(q_i^L)=L^Q(q_i^R)=L(q_i)$ for
$0\le i<k$ and $L^Q(v)=L(v)$ for other vertices of $G^Q$.  We say that $G^Q$ and $L^Q$ are \emph{obtained by cutting along $Q$}.

For integers $M$ and $k$, let $D(M, k)=M+2$ if $k\le 1$ and $D(M, k)=D(M, k-1)+16k^2+8k$ if $k\ge 2$. Note that there is a simple explicit formula for the values $D(M,k)$, but we shall only use its recursive description.
A set $X$ of vertices is \emph{$M$-scattered} if the distance between any two elements of $X$ is
at least $\max\{D(M, 2M+11), D(M,2)+D(M,6)+1\}$.

Let $Q=q_0q_1\dots q_k$ be a path of length $k$. If $k$ is even, then $q_{k/2}$ is said to be the \emph{central vertex} of $Q$; if $k$ is odd, then each of the two vertices $q_{(k-1)/2}$ and $q_{(k+1)/2}$ is a \emph{central vertex} of $Q$.

\begin{lemma}
\label{lemma-albcorr}
Suppose that there is a positive integer $M$ such that the conclusion of Lemma~\ref{lemma-albimp} holds.
Let $G$ be a plane graph, let $P$ be a subpath of the boundary $H$ of its outer face and let $p$ be a central vertex of $P$.
Let $X$ be an $M$-scattered subset of\/ $V(G)$ such that the distance between $p$ and $X$ is at least $D(M,\ell(P))$.
Let $L$ be a list assignment for $G$ that is $M$-valid with respect to $P$ and $X$.
Furthermore, assume that there is at most one edge $uv\in E(G)$ such that $u,v\in V(G)\setminus V(P)$ and $|L(u)|=|L(v)|=3$,
and if such an edge exists, then $\ell(P)\le 1$, $u$ or $v$ is adjacent to $p$ and the distance between $p$ and $X$ is at least $D(M,2)-1$.
If $G$ is $P$-critical with respect to $L$, then $X=\emptyset$.
\end{lemma}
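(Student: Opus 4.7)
The plan is to proceed by induction on $\ell(P)$. The recursion $D(M,k) = D(M,k-1) + 16k^2$ provides the scaffolding: each peeling step of $P$ allocates a $16k^2$ distance budget, matching the $8\ell(P)^2$ size bound on critical graphs from Lemma~\ref{lemma-boundsize} (with a factor of 2 for the two sides of any cut). Assuming for contradiction $X \ne \emptyset$, I would pick $x \in X$ at minimum distance from $p$.

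For the inductive step ($\ell(P) = k \ge 2$): I would apply Theorem~\ref{thm-prepathw} to get $\omega_{P,L}(G) \le k-2$, then analyze $G$ near the central vertex $p$ in the style of Lemma~\ref{lemma-preouf}. The aim is to find either a chord of $H$ through $p$ or a vertex $v$ with at least three neighbors on $H$ close to $p$, yielding a short separating path $Q$ that splits $G$ into pieces $G_1, G_2$. After passing to critical subgraphs via Lemma~\ref{lemma-crs}, each piece has a new boundary path $P_i$ with $\ell(P_i) < k$ and its own central vertex $p_i$. Using Lemma~\ref{lemma-boundsize} on the strip between $P$ and $P_i$ (whose no-adjacent-list-$3$ hypothesis I would verify with the aid of the fan-procession classification of Lemma~\ref{lemma-extthom}), the strip has at most $8k^2$ vertices, giving $d(p_i, x) \ge d(p,x) - 8k^2 \ge D(M,k) - 8k^2 \ge D(M,k-1)$, precisely what is needed to invoke the inductive hypothesis on $G_i$. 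After checking that $M$-validity and $M$-scattering persist, induction yields $X \cap V(G_i) = \emptyset$ for each $i$, contradicting $x \in X$.

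For the base case ($\ell(P) \le 1$): Here $d(p,X) \ge D(M,1) = M+2$, so $x$ lies in the interior with its $M$-ball disjoint from $P$. If the pathological edge $uv$ exists with (say) $u$ adjacent to $p$, I would use the dangerous-list analysis underlying Lemma~\ref{lemma-extthom} to show that the coloring of $u$ is essentially forced, and then modify $L(v)$ accordingly by removing the forced color; after this no two list-$3$ vertices are adjacent, and the extra distance buffer $D(M,2) - 1$ covers the effect of treating $u$ as quasi-precolored. Once the pathological edge is gone, I would reduce to $|X| = 1$ by iteratively absorbing each $x' \in X \setminus \{x\}$: since the $M$-ball around $x'$ is disjoint from the others by $M$-scattering, I would apply a Thomassen-type argument locally to fold $x'$ into the list structure outside the ball, reducing $|X|$ by one (the constants $\max\{D(M,2M+11), D(M,2)+D(M,6)+1\}$ are calibrated precisely for this absorption). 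Finally, invoking Lemma~\ref{lemma-albimp} on the remaining one-precolored-vertex instance yields an $L$-coloring of $G$ that contradicts $P$-criticality.

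The main obstacle will be the bookkeeping in the inductive step: verifying that each tuple $(G_i, P_i, L|_{G_i}, X \cap V(G_i))$ satisfies every clause of the lemma statement, especially the pathological-edge clause (which may appear in one piece even if absent in $G$). The tight fit of the $16k^2$ budget against Lemma~\ref{lemma-boundsize} leaves no slack, so the separating path must be chosen carefully---possibly via several iterations of a Lemma~\ref{lemma-preouf}-style analysis---to ensure it fits within the allotted distance.
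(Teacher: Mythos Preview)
Your plan has the right intuition about the role of the recursion $D(M,k)=D(M,k-1)+16k^2$, but the mechanism you propose for the inductive step does not work, and the base case is not an argument.

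For the inductive step you want to locate, near the centre $p$, a chord or a vertex with three $H$-neighbours and use it to split $G$ into pieces with strictly shorter precoloured paths. There is no reason such a structure exists near $p$: Theorem~\ref{thm-prepathw} bounds a global weight but says nothing about where the reducing configuration sits, and Lemma~\ref{lemma-preouf} applies to $H$-critical graphs (full outer cycle precoloured), not to $P$-critical ones. The paper does not split at the centre at all. It works at an \emph{endpoint} $p_0$ of $P$: it defines a small set $Y\subseteq\{v_1,v_2,v_3\}$ of outer vertices next to $p_0$ (with a case analysis (Y1)--(Y4b) governed by the list sizes of $v_1,\dots,v_4$), chooses a partial colouring $\varphi$ of $Y$ so that every $L_\varphi$-colouring of $G-Y$ extends to $G$, and passes to a $P$-critical subgraph $G_\varphi\subseteq G-Y$. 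The spans of $G_\varphi$ then have length at most $\ell-1$ and sit within distance $8\ell^2$ of $p$ by Lemma~\ref{lemma-boundsize}, which is what actually consumes the $16\ell^2$ budget. The delicate point---and the reason for the elaborate (Y1)--(Y4b) split---is that removing $Y$ may create new adjacent list-$3$ pairs in $G_\varphi$, and a second layer of analysis is needed to handle that; your proposal does not anticipate this at all.

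Your base case is also a gap. The claim that one can ``fold $x'$ into the list structure outside the ball'' is not an argument; Thomassen's theorem gives colourings, not a way to delete a precoloured vertex while preserving non-colourability. What the paper does is the opposite of absorption: when $\ell\le 1$ and $X\ne\emptyset$, it \emph{cuts} $G$ open along a shortest path $Q$ from $P$ to the nearest $x\in X$, producing a longer precoloured path $Q'$ of length at most $2M+11$ (this is why the constant $D(M,2M+11)$ appears in the definition of $M$-scattered), and then invokes minimality on the cut graph to conclude $X\setminus\{x\}=\emptyset$, after which Lemma~\ref{lemma-albimp} finishes. Finally, your treatment of the pathological edge $uv$ is incorrect: Lemma~\ref{lemma-extthom} concerns a precoloured path of length two, not a single neighbour of $p$, and there is no mechanism forcing the colour of $u$. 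The paper handles this edge by a colour-swap trick (introducing a fresh colour $c$, rerouting an edge to $p_1$, and enlarging $L(u)$ to size four) that eliminates the adjacency without appealing to any forcing.
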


\begin{proof}
For a contradiction, suppose that $G$ is a counterexample to Lemma~\ref{lemma-albcorr} with the smallest number of edges
that do not belong to $P$, subject to that, with the smallest number of vertices, and subject to that, with the
largest number of vertices in $P$.
Since $G$ is $P$-critical, every vertex $v\in V(G)\setminus V(P)$ satisfies $\deg(v)\ge |L(v)|$.
Let $\ell = \ell(P)$ and $P=p_0p_1\ldots p_{\ell}$. If $\ell$ is odd, choose the labels so that $p=p_{(\ell + 1)/2}$.

Suppose that $G$ is disconnected.  Since $G$ is $P$-critical, it has two components: one is equal to $P$ and
the other one, $G'$, is not $L$-colorable.   Choose $v\in V(H)\cap V(G')$ arbitrarily, and let $P'$ be the path consisting of $v$.
Note that $G'$ is $P'$-critical.  If $G'$ with the path $P'$ satisfies the assumptions of Lemma~\ref{lemma-albcorr}, then by
the minimality of $G$ we have $X\cap V(G')=\emptyset$, and thus $X=\emptyset$.  This is a contradiction, and thus
the distance from $v$ to the closest vertex $x\in X$ is at most $M+1$.
Let $Q$ be the shortest path between $v$ and $x$ and let $G^Q$, $Q_L$ and $Q_R$ with the list assignment $L^Q$
be obtained from $G'$ by cutting along $Q$.  Let $Q'=Q_L\cup Q_R$ and $X'=X\setminus \{x\}$.
Note that $x$ is the central vertex of $Q'$ and its distance to any $u\in X'$ is at least $D(M,\ell(Q'))$,
since $X$ is $M$-scattered and $\ell(Q')\le 2M+2$.
In particular, $L^Q$ is $M$-valid with respect to $Q'$ and $X'$. Furthermore, $G^Q$ is $Q'$-critical with respect to $L^Q$.
To see this, consider an arbitrary edge $e\in E(G')\setminus E(Q)$. Since $G$ is $P$-critical, there exists an $L$-coloring of $P$ that extends to $G-e$ but not to $G$. The coloring of $G-e$ induced on $Q$ gives rise to an $L^Q$-coloring of $Q'$ that extends to $G^Q-e$ but not to $G^Q$. This shows that
$G^Q$ is $Q'$-critical. Since the distances in $G^Q$ are not shorter than those in $G$, the graph $G^Q$ satisfies all assumptions of Lemma~\ref{lemma-albcorr}.
By the minimality of $G$, we conclude that $X'=\emptyset$.  But then $|X|=1$ and $G'$ (with no precolored path) contradicts Lemma~\ref{lemma-albimp}.

Therefore, $G$ is connected.  In particular, if $\ell=0$, then we can include another vertex of $H$ in $P$; therefore, we can assume that $\ell\ge 1$.
Since $G$ is connected, its outer face has a facial walk, which we write as
$p_{\ell}\ldots p_1p_0v_1v_2v_3\ldots v_s$.

Suppose that the distance between $P$ and $X$ is at most $M+5$. Then the distance from $p$ to $X$ is at most $M+\ell+5$.  By the assumptions of the lemma, this distance is at least $D(M,\ell)$, which is only possible if $\ell\le 1$. As assumed above, this means that $\ell=1$. Moreover, the assumptions of the lemma imply that no two vertices with lists of size three are adjacent.  Let $Q$ be a shortest path between $P$ and
a vertex $x\in X$.  Let $G^Q$, $Q_L$ and $Q_R$ with the list assignment $L^Q$ be obtained from $G$ by cutting along $Q$.
Let $Q'$ be the path consisting of $Q_L\cup Q_R$ and of the edge of $P$, and let $X'=X\setminus \{x\}$. Note that $\ell(Q') \le 2M+11$. Since $X$ is $M$-scattered, so is $X'$, and the distance in $G^Q$ from the central vertex $x$ of $Q'$ to $X'$ is at least $D(M,2M+11) \ge D(M,\ell(Q'))$.
As in the previous paragraph, we conclude that since $G^Q$ is $Q'$-critical with respect to $L^Q$, we have $X'=\emptyset$. Then $|X|=1$ and, consequently, $G$ contradicts the postulated property of the constant $M$. Therefore, we conclude:

\claim{cl-farle2}{The distance between $P$ and $X$ is at least $M+6$.}

We say that a cycle $T$ in $G$ is \emph{separating} if $V(\cin_T(G))\ne V(T)$ and $T$ does not bound the outer face of $G$.
Let $T=t_1\ldots t_k$ be a separating $k$-cycle in $G$, where $k\le 4$. Suppose that the distance from $t_1$ to $P$ is at most $6-k$.
Let us choose such a cycle with $\cin_T(G)$ minimal; it follows that $T$ is an induced cycle.  By Lemma~\ref{lemma-crs}, $\cin_T(G)$ is $T$-critical, and
thus there exists an $L$-coloring $\psi$ of $T$ that does not extend to an $L$-coloring of $\cin_T(G)$.
Let $G'=\cin_T(G)-\{t_3,\ldots, t_k\}$.  Let $L'$ be the list assignment for $G'$ such that $L'(t_1)=\{\psi(t_1)\}$, $L'(t_2)=\{\psi(t_2)\}$
and $L'(v)=L(v)\setminus\{\psi(t_i)\mid vt_i\in E(G), 3\le i\le k\}$ for other vertices $v\in V(G')$.
Note that no two vertices with lists of size three are adjacent in $G'$, as otherwise we have $k=4$ and $t_3t_4$ is incident with a separating
triangle contradicting the choice of $T$.  The graph $G'$ is not $L'$-colorable,
hence it contains a $t_1t_2$-critical subgraph $G''$.  By \refclaim{cl-farle2}, $L'$ is an $M$-valid list
assignment for $G''$, and the distance between $t_1$ and $X\cap V(G')$ is at least $M+2$.
By the minimality of $G$, it follows that $X\cap V(G'')=\emptyset$.  However, then $G''$ contradicts
Theorem~\ref{thm-thom}.  We conclude that the following holds:

\claim{cl-notr}{If $T\neq H$ is a separating $k$-cycle in $G$, where $k\le4$, then the distance
between\/ $T$ and\/ $P$ is at least\/ $7-k$.}

Similarly, by applying induction, we obtain the following property.

\claim{cl-nocut}{If $R$ is either a chord of $H$ that
does not contain an internal vertex of $P$, or $R$ is a cut-vertex of $G$, then the distance between $R$ and $P$ is
at least~$4$.}

\begin{proof}
Suppose first that $R$ does not contain an internal vertex of $P$.
Let $G'$ be the subgraph of $G$ split off by $R$.  By Lemma~\ref{lemma-crs}, $G'$ is
$R$-critical, and Theorem~\ref{thm-thom} implies that $X\cap V(G')\neq\emptyset$.
If the distance from $P$ to $R$ is at most 3, then by \refclaim{cl-farle2}, the
distance between $R$ and $X$ is at least $M+2=D(M,\ell(R))$.  If $G'-V(R)$ does not contain two adjacent vertices with lists of size three, this
contradicts the minimality of $G$.  If $uv\in E(G'-V(R))$ and $|L(u)|=|L(v)|=3$, then by the assumptions, we have
$\ell=1$ and $u$ or $v$ is adjacent to $p$.  Consequently, $p\in V(R)$, and thus the distance between a central vertex $p$ of
$R$ and $X$ is at least $D(M,2)-1$.  Again, we have a contradiction with the minimality of $G$.

Suppose now that $P$ contains a cut-vertex $v$ of $G$, and let $G_1$ and $G_2$ be the two maximal connected subgraphs
of $G$ that intersect in $v$.  For $i\in \{1,2\}$, let $P_i=P\cap G_i$ and note that either $P_i=G_i$
or $G_i$ is $P_i$-critical by Lemma~\ref{lemma-crs}.  By the minimality of $G$, we conclude that
neither $G_1$ nor $G_2$ contains a vertex of $X$, and thus $X=\emptyset$.  This contradiction completes the proof.
\end{proof}

Next, we claim the following.

\claim{cl-sepnox}{Let $C\subset G$ be a cycle of length at most\/ $\ell+1$ such that $C\neq H$ and the distance between
$C$ and $p$ is at most\/ $8\ell^2+4\ell$.  Then $\cin_C(G)$ contains no vertices of $X$.}

\begin{proof}
The length of $C$ is at least three, and thus $\ell\ge 2$.
If $x\in X$ belongs to $C$, then the distance from $x$ to $p$ is less than $8\ell^2 + 5\ell<D(M,\ell)$, a contradiction.
Thus, we may assume that $V(C)\cap X = \emptyset$ and, in particular, that $C$ does not bound a face.
If $\ell(C)\le \ell$, then the claim holds even under a weaker assumption that the distance between
$C$ and $P$ is at most $16\ell^2+8\ell$.  Indeed, consider a spanning subpath $Q$ of $C$ of length $\ell(C)-1$ such that
the distance between $p$ and a central vertex $q$ of $Q$ is at most $16\ell^2+8\ell$.  The distance
of every vertex of $X$ in $\cin_C(G)$ from $q$ is at least $D(M,\ell)-16\ell^2-8\ell\ge D(M,\ell(Q))$.
By Lemma~\ref{lemma-crs}, we have that $\cin_C(G)$ is $Q$-critical, and the claim follows by the minimality of $G$.

Suppose now that $\ell(C)=\ell+1$ and let $C=c_0c_1\ldots c_{\ell}$, where $c_{\lceil \ell/2\rceil}$ is the vertex of $C$
nearest to $p$.  There exists an $L$-coloring $\vf$ of $C$ that
does not extend to an $L$-coloring of $\cin_C(G)$.  Let $d$ be a new color that does not appear in any of the lists
and let $L'$ be the list assignment obtained from $L$ by replacing $\vf(c_{\ell})$ by $d$ in the lists of $c_{\ell}$ and
its neighbors and by setting $L'(c_0)=\{\vf(c_0),\vf(c_1),d\}$.  Let $\vf'$ be the coloring of the path $C'=c_1c_2\ldots c_{\ell}$
such that $\vf'(c_{\ell})=d$ and $\vf'$ matches $\vf$ on the rest of the vertices.  The coloring $\vf'$ does not
extend to an $L'$-coloring of $\cin_C(G)$; hence, $\cin_C(G)$ contains a subgraph $F\supset C'$ that is $C'$-critical with respect to $L'$.
The distance of $X\cap V(F)$ from the central vertex $c_{\lceil \ell/2\rceil}$ of $C'$ is at least
$D(M,\ell)-8\ell^2-4\ell>D(M,\ell(C'))$.
By the minimality of $G$, we conclude that $F$ contains no vertex of $X$.  By Theorem~\ref{thm-prepathw}, we have
$\omega_{C',L'}(F)\le \ell-3$, and in particular, every face of $F$ has length at most $\ell$.
By Lemma~\ref{lemma-boundsize}, the distance from $c_{\lceil \ell/2\rceil}$ to every vertex of $F$ is less than $8\ell^2+4\ell$, thus the distance between
every vertex of $F$ and $p$ is at most $16\ell^2+8\ell$.
By the previous paragraph, we conclude that no vertex of $X$ appears in the interior of any face of $F$.
Let $Q$ be the path in the boundary of the outer face of $F$, distinct from $C'$, joining $c_1$ with $c_{\ell}$.  If $v\neq c_0$ is an internal vertex
of $Q$, then $\omega_{F,C',L'}(v)\ge 1$, hence $Q$ contains at most $\ell-3$ such internal vertices.  It follows that
$Q+c_1c_0c_{\ell}$ is either a cycle of length at most $\ell$ (if $c_0\not\in V(Q)$) or a union of two cycles of total length
at most $\ell+1$ (if $c_0\in V(Q)$).  In both cases, the interiors of the cycles do not contain any vertex of $X$ by the previous paragraph.
Consequently, $X\cap V(\cin_C(G))=\emptyset$ as claimed.
\end{proof}

Let $\psi$ be an $L$-coloring of $P$ that does not extend to an $L$-coloring of $G$.
We are going to show that $\psi$ extends to all proper subgraphs of $G$ that contain $P$.
To prove this, we use the following claim, which we formulate in greater generality
for future use.

\claim{cl-empty}{Let $G'$ be a proper subgraph of $G$ with $P\subseteq G'$, and let $H'$ be the boundary of
the outer face of $G'$.  Let $L'$ be a list assignment for $G'$ that is $M$-valid with respect to $P$ and $X\cap V(G')$,
such that $L'(v)=L(v)$ for all $v\in V(G')\setminus V(H')$ and $L'(v)\subseteq L(v)$ for all $v\in V(H')$.
Furthermore, assume that there is at most one edge $u'v'\in E(G')$ such that $u',v'\in V(G')\setminus V(P)$ and $|L'(u)|=|L'(v)|=3$,
and if such an edge exists, then $\ell(P)\le 1$, $u'$ or $v'$ is adjacent to $p$ and the distance between $p$ and $X\cap V(G')$ is at least $D(M,2)-1$.
Suppose that $G'$ is $P$-critical with respect to $L'$.  Then $X\cap V(G')=\emptyset$ and every internal face $f$ of $G'$ satisfies $X\cap V(\cin_f(G))=\emptyset$.
Furthermore, suppose that $Q$ is a span\footnote{As defined in Section~\ref{sec-connsg}.} in $G$ (not necessarily contained in $G'$) with a central vertex in $G'$.
If $\ell(Q)\le \ell-1$, then the subgraph of $G$ split off by $Q$ contains no vertices of $X$.  In particular, this is the case if $Q\subset H'$ and
$L'(v)=L(v)$ for all internal vertices $v$ of $Q$.}
\begin{proof}
Since $G'$ satisfies the assumptions of Lemma~\ref{lemma-albcorr}, the minimality of $G$ implies that
$X\cap V(G')=\emptyset$.  By Theorem~\ref{thm-prepathw}, it follows that
$\omega_{P,L'}(G')\le \ell-2$, and in particular, $\ell\ge 2$.  Let $f$ be a face of $G'$ distinct from the outer one
such that $\cin_f(G)\neq f$.  Since $\omega(f)\le \ell-2$, we have $\ell(f)\le \ell+1$.
Furthermore, by Lemma~\ref{lemma-boundsize}, the distance in $G'$ between $f$ and $p$ is
at most $8\ell^2+4\ell$.  By \refclaim{cl-sepnox}, no vertex of $X$ appears in $\cin_f(G)$.

Consider now a span $Q$ of $G$ with a central vertex $q$ in $G'$, and let $G_Q$ be the $Q$-component of $G$ split off by $Q$.
By Lemma~\ref{lemma-boundsize}, the distance between $p$ and $q$ in $G'$ is at most $8\ell^2+4\ell$, and thus if
$\ell(Q)\le \ell-1$, then the distance between $q$ and $X$ in $G_Q$ is at least $D(M,\ell)-8\ell^2-4\ell\ge D(M,\ell(Q))$.
Observe that $G_Q$ is $Q$-critical if $G_Q\ne Q$, and by the minimality of $G$, $G_Q$ contains no vertices of $X$.

Suppose that $Q$ happens to be a subpath of $H'$ such that $L'(v)=L(v)$ for all internal vertices $v$ of $Q$, implying $\omega_{G',P,L'}(v)\ge 1$.
Then, $\ell(Q)\le \omega_{P,L'}(G')+1\le \ell-1$, and the argument of the previous paragraph applies.
\end{proof}

Suppose that there exists a proper subgraph $G'\subset G$ such that $P\subset G'$ and
$\psi$ cannot be extended to an $L$-coloring of $G'$.  Let $G'$ be minimal subject to this
property.  Then $G'$ is a $P$-critical graph.  If $G'$ does not satisfy the assumptions of \refclaim{cl-empty} (with the list assignment $L$),
then $\ell=1$ and there exist adjacent vertices $u,v\in V(G')\setminus V(P)$ with lists of size three such that neither
of them is adjacent to $p_1$ in $G'$, while $u$ is adjacent to $p_1$ in $G$.
Let $c$ be a new color that does not appear in any of the lists.  Let $L'$ be the list assignment for $G'$
obtained from $L$ by replacing $\psi(p_1)$ by $c$ in the lists of all vertices adjacent to $p_1$ in $G'$
and by setting $L'(p_0)=\{\psi(p_0)\}$, $L'(p_1)=\{c\}$, and $L'(u)=L(u)\cup \{c\}$.  Each $L'$-coloring of $G'+up_1$
corresponds to an $L$-coloring of $G'$ extending $\psi$, hence $G'+up_1$ is not $L'$-colorable and it contains a $P$-critical subgraph $G''$.
Note that $|L'(u)|=4$ and hence no two vertices with lists of size three are adjacent in $G''$.
However, the minimality of $G$ implies that $G''$ contains no vertices of $X$, and we obtain a contradiction with Theorem~\ref{thm-thom}.

Hence, \refclaim{cl-empty} applies to $G'$, showing that no vertex of $X$ is contained in $G'$, in $\cin_f(G)$ for internal faces $f$ of $G'$,
or in the subgraphs of $G$ split off by the spans contained in the boundary of the outer face of $G'$.
We conclude that $X=\emptyset$.  This is a contradiction; therefore, $\psi$ extends to all proper
subgraphs of $G$ that contain $P$.  Equivalently, we can restrict the lists of the vertices of $P$ to the singleton lists
given by $\psi$, and $G$ is $P$-critical with respect to the resulting list assignment.  Hence, we can assume the following.

\claim{cl-colne}{The vertices of $P$ have lists of size one, $G$ is not $L$-colorable and every proper subgraph of $G$ that contains $P$ is $L$-colorable.}

Let us fix $\psi$ as the unique $L$-coloring of $P$.

Consider a chord $e=uv$ of $H$ at distance at most three from $P$.
By \refclaim{cl-nocut}, we can assume that $u$ is an internal vertex of $P$, and in particular $\ell\ge 2$.
If $v$ belonged to $P$ as well, then by \refclaim{cl-colne} we have $G=P+e$, implying $X=\emptyset$.
Hence, $v$ does not belong to $P$.

Let $G_1$ and $G_2$ be the maximal connected subgraphs of $G$
intersecting in $uv$, such that $G_1\cup G_2 = G$, $p_\ell\in V(G_1)$, and $p_0\in V(G_2)$. Let $P_i=(P\cap G_i)+e$.  For $i\in \{1,2\}$,
Lemma~\ref{lemma-crs} implies that the graph $G_i$ is $P_i$-critical.
Note that either $\ell(P_i)<\ell(P)$, or $\ell(P_i)=\ell$ and $p$ is a central vertex of $P_i$.
We conclude that the distance between a central vertex of $P_i$ and $X$ is at least $D(M, \ell(P_i))$.
By the minimality of $G$, we have $X\cap V(G_i)=\emptyset$
for $i\in\{1,2\}$.  It follows that $X=\emptyset$, which is a contradiction.
Therefore, we have:

\claim{cl-chord}{The distance of any chord of $H$ from $P$ is at least four.}

In particular, if $\ell\ge 2$, then $p$ is not incident with a chord, and thus $G$ cannot contain adjacent vertices
with lists of size three.

\claim{cl-no33ifg2}{If $\ell\ge 2$, then no edge $uv\in E(G)$ satisfies $|L(u)|=|L(v)|=3$.}

Let $s=|V(H)\setminus V(P)|$. A consequence of \refclaim{cl-colne} is that $s\ge1$ (if $s$ were equal to $0$, then $G-p_0p_\ell$ would contradict the claim). We can say more:

\claim{cl-no44}{If\/ $|L(v_1)|>3$, then $|L(v_1)|=4$, $s\ge 2$ and $|L(v_2)|=3$.}

Otherwise, suppose that $|L(v_1)|=5$, or $|L(v_1)|=4$ and either $s=1$ or $|L(v_2)|\ge 4$.
Let $G'=G-p_0v_1$ and let $L'$ be the list assignment obtained from $L$ by removing $\psi(p_0)$ from
the list of $v_1$.  The assumptions together with \refclaim{cl-chord} imply that if $|L'(v_1)|=3$, then
$v_1$ is not adjacent to any vertex with list of size three in $G'$.  By \refclaim{cl-colne}, $G'$ is $P$-critical with respect to $L'$, contradicting the minimality of~$G$.

Suppose now that $\ell\ge 2$ and a vertex $v$ is adjacent to $p_0$, $p_1$ and $p_2$.  By \refclaim{cl-chord},
we have $v\not\in V(H)$.  Let $P'=p_0vp_2p_3\ldots p_{\ell}$,
$H'=p_0vp_2\ldots p_{\ell}v_s\ldots v_1$ and $G'=\cin_{H'}(G)$.  By Lemma~\ref{lemma-crs},
$G'$ is $P'$-critical.  If $\ell\ge 3$, then $p$ is a central vertex of $P'$ and by the minimality of $G$, we have $X\cap V(G')=\emptyset$.
Furthermore, \refclaim{cl-notr} implies that $p_0p_1v$ and $p_1p_2v$ bound faces of $G$, and thus
$X=\emptyset$.  This contradiction shows the following.

\claim{cl-ninp}{If\/ $\ell\ge2$ and $p_0$, $p_1$ and $p_2$ have a common neighbor, then $\ell=2$.}

For a vertex $v\in V(G)\setminus V(P)$, let us define the reduced list $S(v)$ by
$$
   S(v)=L(v)\setminus \{\psi(r) : r\in V(P), vr\in E(G)\}.
$$

\claim{cl-new_claim_11}{If $v$ is a vertex of $V(G)\setminus V(P)$ with $k$ neighbors in $P$, then $|S(v)|=|L(v)|-k$.}

To see this, suppose $v$ is adjacent to a vertex $r\in V(P)$ and $\psi(r)\notin L(v)$, or $v$ is adjacent to two vertices $r,r'\in V(P)$ with $\psi(r)=\psi(r')$. Then we can remove the edge $vr$ and obtain a contradiction to the last assertion in \refclaim{cl-colne}.

Consider a nonempty set $Y\subseteq V(G)\setminus V(P)$ and a partial coloring $\vf$ of the subgraph of $G$ induced by $Y$ from the reduced list assignment $S$.  The domain of this partial coloring is denoted by $\dom(\vf)\subseteq Y$.
We define $L_{\vf}$ as the list assignment such that
$$L_{\vf}(z)=L(z)\setminus \{\vf(y):y\in\dom(\vf),yz\in E(G),\vf(y)\in S(z)\}$$
for every $z\in V(G-Y)$.

\begin{figure}
\begin{center}
\includegraphics[width=120mm]{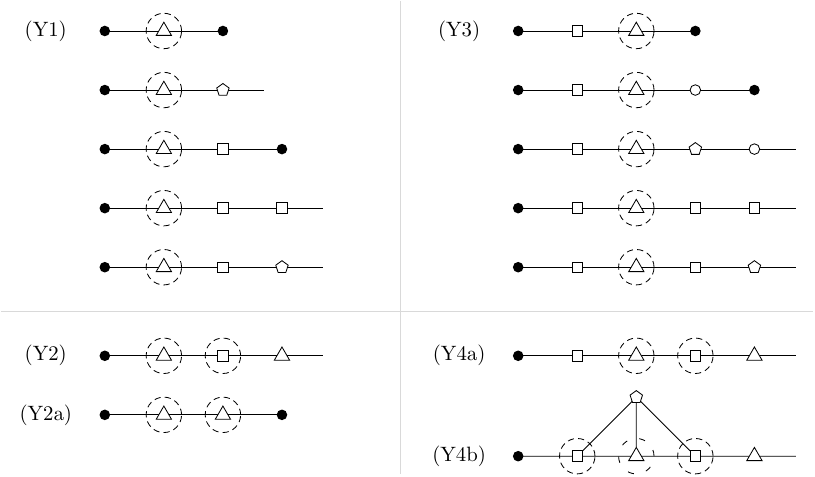}
\end{center}
\caption{The definition of the set $Y$. Full-circle vertices belong to $P$, numbers of sides of polygonal vertices indicate sizes of the lists.
Empty-circle vertices do not belong to $P$, but may have any lists.}
\label{fig-Y}
\end{figure}

We now define a particular set $Y\subseteq V(H)\setminus V(P)$ (see Figure~\ref{fig-Y} for reference) and a partial $L$-coloring $\vf$ of $Y$ as follows.
\begin{definition}\label{def-y}\leavevmode
\begin{itemize}
\item[(Y1)] If $|L(v_1)|=3$ and one of the following holds:
\begin{itemize}
\item[$\circ$] $s=1$, or
\item[$\circ$] $s\ge 2$ and $|L(v_2)|=5$, or
\item[$\circ$] $s=2$ and $|L(v_2)|=4$, or
\item[$\circ$] $s\ge 3$, $|L(v_2)|=4$ and $|L(v_3)|\neq 3$,
\end{itemize}
then $Y=\{v_1\}$ and $\vf(v_1)\in S(v_1)$ is chosen arbitrarily.
\item[(Y2)] If $|L(v_1)|=3$, $s\ge 3$, $|L(v_2)|=4$ and $|L(v_3)|=3$, then $Y=\{v_1,v_2\}$ and $\vf$ is chosen so that $\vf(v_2)\in L(v_2)\setminus L(v_3)$
and $\vf(v_1)\in S(v_1)\setminus\{\vf(v_2)\}$.
\item[(Y2a)] If $s=2$ and $|L(v_1)|=|L(v_2)|=3$, then $Y=\{v_1,v_2\}$ and $\vf(v_1)\in S(v_1)$ and $\vf(v_2)\in S(v_2)$ are chosen arbitrarily so that $\vf(v_1)\neq \vf(v_2)$.
\item[(Y3)] If $|L(v_1)|=4$, $s\ge 2$, $|L(v_2)|=3$, and one of the following holds:
\begin{itemize}
\item[$\circ$] $s\le 3$, or
\item[$\circ$] $s\ge 4$ and $|L(v_3)|=5$, or
\item[$\circ$] $s\ge 4$, $|L(v_3)|=4$ and $|L(v_4)|\neq 3$,
\end{itemize}
then $Y=\{v_2\}$.  If $s=3$ and $|L(v_3)|=3$, then $\vf(v_2)$ is chosen in $L(v_2)\setminus S(v_3)$,
otherwise $\vf(v_2)\in S(v_2)$ is chosen arbitrarily.
\item[(Y4)] If $s\ge 4$, $|L(v_1)|=4$, $|L(v_2)|=3$, $|L(v_3)|=4$ and $|L(v_4)|=3$, then:
\begin{itemize}
\item[(Y4a)] If $v_1$, $v_2$ and $v_3$ do not have a common neighbor, then $Y=\{v_2, v_3\}$ and $\vf$ is chosen
so that $\vf(v_3)\in L(v_3)\setminus L(v_4)$ and $\vf(v_2)\in L(v_2)\setminus \{\vf(v_3)\}$.
\item[(Y4b)] If $v_1$, $v_2$ and $v_3$ have a common neighbor, then $Y=\{v_1, v_2, v_3\}$ and $\vf$ is chosen
so that $\vf(v_3)\in L(v_3)\setminus L(v_4)$, $\vf(v_1)\in S(v_1)$ and either at least one of $\vf(v_1)$ and $\vf(v_3)$
does not belong to $L(v_2)$, or $\vf(v_1)=\vf(v_3)$.  The vertex $v_2$ is left uncolored.
Note that this is the only case where $\dom(\vf)\ne Y$.
\end{itemize}
\end{itemize}
\end{definition}

By using \refclaim{cl-no44} (together with \refclaim{cl-chord} and the condition on adjacent vertices with lists of size 3) it is easy to see that $Y$ and $\vf$ are always defined.
(Note that in the case of adjacent vertices $u,v$ with lists of size 3, we can assume that $u$ is adjacent to $p$ and $\ell=1$. Let us recall that
if $\ell=1$, then we have chosen $p=p_1$; hence, $u=v_s$ and $v=v_{s-1}$. Therefore, only (Y2a) and (Y3) are needed to deal with this special case.)
We remark that the following is true.

\claim{cl-coloring_extends}{Every $L_{\vf}$-coloring of\/ $G-Y$ extends to an $L$-coloring of $G$.}

Indeed, this is obviously true if $\dom(\vf)=Y$. The only case when $\dom(\vf)\ne Y$ is (Y4b), where $Y=\{v_1,v_2,v_3\}$ and $\dom(\vf)=\{v_1,v_3\}$. However, $\deg(v_2)=3$ by \refclaim{cl-notr}, and $|L_{\vf}(v_2)|\ge2$ by the choice of $\vf(v_1)$ and $\vf(v_3)$. This implies that any $L_{\vf}$-coloring of $G-Y$ extends to $v_2$ and proves \refclaim{cl-coloring_extends}.
Consequently, $G-Y$ is not $L_{\vf}$-colorable. We let $G_{\vf}$ be a $P$-critical subgraph of $G-Y$.

Using \refclaim{cl-farle2} and \refclaim{cl-chord}, it is easy to verify that the choice of $Y$ and $\vf$ ensures that the list assignment $L_{\vf}$ for $G-Y$
is $M$-valid with respect to $P$ and $X$.  Let us now distinguish two cases depending on whether $G_{\vf}$
contains two adjacent vertices with lists of size three (that did not have lists of size three in $G$ as well) or not.

\begin{itemize}
\item  Suppose first that \textbf{no two vertices $u, v\in V(G_{\vf})$ such that $|L_{\vf}(u)|=|L_{\vf}(v)|=3$ and $\max(|L(u)|,|L(v)|)>3$ are adjacent.}
If $G_{\vf}$ with the list assignment $L_{\vf}$ does not satisfy the assumptions of Lemma~\ref{lemma-albcorr}, this is only because there are adjacent vertices with lists of size 3 that are no longer adjacent to $p$ in $G_{\vf}$. More precisely, in that case
$\ell=1$, $|L(v_s)|=|L(v_{s-1})|=3$, $v_sv_{s-1}\in E(G_{\vf})$ and $p_1v_s\not\in E(G_{\vf})$.
Let $c$ be a new color that does not appear in any of the lists and let $L'$ be the list assignment obtained
from $L_{\vf}$ by replacing $\psi(p_1)$ with $c$ in the lists of vertices adjacent to $p_1$ in $G_{\vf}$
and by setting $L'(p_1)=\{c\}$ and $L'(v_s)=L(v_s)\cup\{c\}$.  Observe that $G_{\vf}+p_1v_s$ is not $L'$-colorable
and thus it contains a $P$-critical subgraph $G'$.  By the minimality of $G$, we have $X\cap V(G')=\emptyset$.
However, then $G'$ contradicts Theorem~\ref{thm-thom}.

Therefore, $G_{\vf}$ with the list assignment $L_{\vf}$ satisfies the assumptions of Lemma~\ref{lemma-albcorr}.
By the minimality of $G$, we conclude that $G_{\vf}$ does not contain any vertex of $X$.
By Theorem~\ref{thm-prepathw}, we have $\ell\ge 2$ and $\omega_{P,L_{\vf}}(G_{\vf})\le \ell-2$.
Let $Q$ be the span contained in the boundary of the outer face of $G_{\vf}$ such that the $Q$-component $G_Q$ split off by $Q$ contains $Y$.
By \refclaim{cl-empty}, if $f$ is a face of $G_{\vf}$, then $\cin_f(G)$ contains no vertex of $X$, and if $Q'$ is a span different from $Q$, then
the subgraph of $G$ split off by $Q'$ contains no vertex of $X$.  Since $X\neq\emptyset$, it follows that
$G_Q$ contains a vertex of $X$.  Also, \refclaim{cl-empty} implies that $\ell(Q)\ge \ell$.

If $v$ is an internal vertex of $Q$, then $\omega_{P,L_{\vf}}(v)\ge 1$,
unless $|L_{\vf}(v)|=3$.  Since the sum of the weights of the internal vertices of $Q$ is at most $\omega_{P,L_{\vf}}(G_{\vf})\le \ell-2$,
we conclude that at least one internal vertex of $Q$ has a list of size three.  This is only possible in the cases (Y2), (Y4a), and (Y4b);
\refclaim{cl-no33ifg2} excludes the case (Y2a).  Furthermore,
observe that only one internal vertex of $Q$ has a list of size three by \refclaim{cl-notr}; let $v$ denote this vertex.
It also follows that $\ell(Q)=\ell$ and that all internal vertices of $Q$ other than $v$ either belong to $P$ or have lists of size four.

Let $y_1$ and $y_2$ be the neighbors of $v$ in $\dom(\vf)$, where $y_1$ is closer to $p_0$ than $y_2$.
Let $Q_1$ and $Q_2$ be the subpaths of $Q$ intersecting in $v$ (where $Q_1$ is closer to $p_0$ than $Q_2$)
and let $Q'_1$ and $Q'_2$ be the paths obtained from them by adding the edge $y_1v$.
For $i\in\{1,2\}$, if $\ell(Q_i)<\ell-1$, then \refclaim{cl-empty} implies that the subgraph of $G$ split off by $Q'_i$
does not contain any vertex of $X$.  Since $X\neq \emptyset$ and $\ell(Q_1)+\ell(Q'_2)=\ell(Q)=\ell$,
it follows that $\ell(Q_1)=1$ or $\ell(Q_2)=1$.

If for some $i\in\{1,2\}$, we have $\ell(Q_i)>1$ and an internal vertex $z$ of $Q_i$ is adjacent to
$y_i$ (this is only possible when $\ell\ge 3$), then $Q_i$ is an edge-disjoint union of paths $Q_{i,1}$ and $Q_{i,2}$ such that
$Q_{i,1}$ together with $vy_iz$ forms a cycle $C$ of length at most $\ell$ and
$Q_{i,2}+zy_i$ is a span of length $k\le \ell-1$.  By considering the interior of $C$ and the subgraph
of $G$ split off by $Q_{i,2}+zy_i$ separately, \refclaim{cl-empty} again implies that the subgraph of $G$ split off by $Q$
does not contain any vertex of $X$.  This is a contradiction.  It follows that no internal vertex of $Q_i$
is adjacent to $y_i$, and thus no internal vertex of $Q_i$ has a list of size four.
Therefore, all internal vertices of $Q$ except for $v$ belong to~$P$.

If $\ell(Q_1)>1$, then let $Q_2=vw$, where $w\in V(H)$; consider the subgraph $F$ of $G$ split off by $y_1vw$.
Note that $\ell=\ell(Q)\ge 3$ and the distance between $v$ and $X$ is at least $D(M,\ell)-\lceil \ell/2\rceil-3\ge D(M,2)$.
By the minimality of $G$, it follows that $F$ (which is $y_1vw$-critical) contains no vertex of $X$.  By Theorem~\ref{thm-prepathw},
we have $\omega_{y_1vw,L}(F)=0$.  This is a contradiction, since in each of the cases (Y2), (Y4a) and (Y4b), $F-\{y_1,v,w\}$
contains a vertex with a list of size four.

Therefore, $\ell(Q_1)=1$.  In case (Y4a), $v$ is not adjacent to $v_1$, and thus $v$ is adjacent to $p_0$.
Similarly, in case (Y4b), $v$ is adjacent to $p_0$, since $v_1$ belongs to $Y$.  Since $v_1$ has a list of size four, it has degree at least
four, and thus at least one vertex of $G$ is drawn inside the $4$-cycle $v_1v_2vp_0$.  This contradicts \refclaim{cl-notr}.
Suppose now that (Y2) holds.  Since $\ell(Q_2)=\ell-1$ and $\deg_{G_{\vf}}(v)\ge3$,
we conclude that $Q_2=vp_2p_3\ldots p_{\ell}$.
Hence, $G_\vf$ is the union of $P$ with the graph $G'=G_\vf-\{p_4,\ldots,p_\ell\}$, and $G'$ is $p_0p_1p_2$-critical with respect to the list assignment $L_\vf$.
Since $v$ is adjacent to $p_0$ and $p_2$, Lemma~\ref{lemma-extthom} implies that $G'$ is equal to the $4$-cycle $p_0p_1p_2v$ with the chord $p_1v$,
and thus $G_{\vf}$ consists of $P$ and the vertex $v$ adjacent to $p_0$, $p_1$ and $p_2$.
By \refclaim{cl-ninp}, $\ell=2$.  Let us postpone the discussion of this case and summarize it in the next claim.
\claim{leftcase1}{In the subcase considered, {\rm (Y2)} holds, $\ell=2$, and $p_0$, $p_1$, $p_2$, $v_1$ and $v_2$ have a common neighbor.}

\item Let us now consider the case that \textbf{two vertices $u, v\in V(G_{\vf})$ with $|L_{\vf}(u)|=|L_{\vf}(v)|=3$ and $|L(v)|>3$ are adjacent.}
By \refclaim{cl-notr}, at most one of $u$ and $v$ has two neighbors in $\dom(\vf)$.  If neither $u$ nor $v$ has two neighbors in $\dom(\vf)$,
then $u,v\in V(H)$ and the choice of $Y$ and $\vf$ ensures that $uv$ is a chord of $H$.  However, that contradicts \refclaim{cl-chord}.
Thus, we can assume that $v$ has two neighbors in $\dom(\vf)$ and $v\not\in V(H)$; and in particular, $Y$ was chosen
according to one of the cases (Y2), (Y4a) or (Y4b) (the case (Y2a) is excluded, since in that case $G_{\vf}$ would contain at most
one vertex with a list of size three).  Since $u$ has at most one neighbor in $\dom(\vf)$ and $|L_{\vf}(u)|=3$, we have $u\in V(H)$.
If $|L(u)|=4$, then $u$ has a neighbor $y\in\dom(\vf)$, and by \refclaim{cl-chord}, we have $uy\in E(H)$.  This is not possible
(in the case (Y4a), the vertex $v_1$ has a list of size four, but it is not adjacent to $v$).  Therefore, $|L(u)|=3$.
Furthermore, inspection of cases (Y2), (Y4a), (Y4b) shows that $u$ has no neighbor in $Y$, as otherwise either $H$ would have a chord contradicting \refclaim{cl-chord},
or $G$ would contain a $4$-cycle $y'yuv$ with $y,y'\in Y$ and $|L(y)|=4$; in the latter case, $y$ would have degree at least four, contradicting \refclaim{cl-notr}.

Let $y_1,y_2\in \dom(\vf)$ be the neighbors of $v$, where $y_1$ is closer to $p_0$ than $y_2$.  By the previous paragraph, $v$ has a neighbor $u'$ in $V(H)\setminus Y$
with $|L(u')|=3$; choose such a neighbor $u'$ so that the subgraph $F$ of $G$ split off by $u'vy_1$ is as small as possible.
Note that $\omega_{u'vy_1,L}(F)\ge 1$, as $|L(y_2)|=4$.  Since $F$ is $u'vy_1$-critical with respect to $L$,
Theorem~\ref{thm-prepathw} implies that $X\cap V(F)\neq\emptyset$, and by the minimality of $G$,
a vertex $x\in X\cap V(F)$ is at distance at most $D(M,2)-1$ from $v$.  Hence, the distance between $x$ and $p$ is at most
$D(M,2)+1+\ell$, and since this is at least $D(M,\ell)$ by the assumptions, we have $\ell\le 2$.

Let $Q$ be the path consisting of $P$, the subpath of $H$ between $p_0$ and $y_1$
and the path $y_1vu'$.  If $|L(v_s)|=|L(v_{s-1})|=3$ and $u'\neq v_s$, include also the edge $p_{\ell}v_s$ in $Q$. Let $G_Q$ be the 
subgraph of $G$ such that $G_Q\cup F=G$ and $G_Q\cap F=u'vy_1$.
Note that $\ell(Q)\le 6$ (even if the edge $p_{\ell}v_s$ has been added to $Q$, since then $\ell=1$ by \refclaim{cl-no33ifg2}).
Since the distance between $v$ and a vertex of $X\cap V(F)$ is at most
$D(M,2)-1$ and $X$ is $M$-scattered, the distance between a central vertex of $Q$ and $X\cap V(G_Q)$
is at least $D(M,6)$.  Since $G_Q$ is $Q$-critical with respect to $L$, the minimality of $G$ implies that $G_Q$ contains no vertex of $X$.

Consider now the graphs $G'_Q=G_Q-Y$ and $F'=F-Y$ with list assignment $L_{\vf}$.
By the choice of $u'$ so that $F$ is minimal, no two adjacent vertices of $F'$ other than $u'$ and $v$ have lists of size three.  Furthermore,
the distance between $v$ and $X$ is at least $M+3>D(M,1)$ by \refclaim{cl-farle2}.  By the minimality of $G$, no $uv$-critical subgraph
of $F'$ (with respect to the list assignment $L_{\vf}$) contains a vertex of $X$, and by Theorem~\ref{thm-thom} we conclude that
every $L_{\vf}$-coloring of $u'v$ extends to an $L_{\vf}$-coloring of $F'$.  Since $G_{\vf}$ is not $L_{\vf}$-colorable, it follows that
$G'_Q$ is not $L_{\vf}$-colorable.  By Theorem~\ref{thm-thom} this is not possible if $\ell=1$, and thus $\ell=2$.

Note that if $xy$ is an edge of $G'_Q$ and $|L_{\vf}(x)|=|L_{\vf}(y)|=3$, then $x$ or $y$ is equal to $v$, and in particular
all such edges distinct from $vu'$ are chords of the outer face of $G'_Q$.
Since $G'_Q$ is not $L_{\vf}$-colorable, it contains a subgraph $G''_Q$ that is $P$-critical with respect to $L_{\vf}$.
By Lemma~\ref{lemma-extthom}, $G''_Q$ is an even fan procession with base $P$ and $L_{\vf}$ is dangerous for $G''_Q$.
Note that all chords of the outer face of $G''_Q$ are incident with $p_1$, and thus $G''_Q$ contains at most one edge $xy$ satisfying $|L_{\vf}(x)|=|L_{\vf}(y)|=3$.
Furthermore, using \refclaim{cl-chord} observe that the outer face of $G''_Q$ has at most one chord, namely $p_1v$.
The only even fan processions satisfying these constraints are a fan of order $1$ or a fat fan of order $2$.
In the former case, $v$ is adjacent to all vertices of $P$.  In the latter case, $v$ is adjacent to $p_0$ and $v_s$,
$|L(v_s)|=3$ and $p_0$, $p_1$, $p_2$, $v$ and $v_s$ have a common neighbor.
This is not possible in the cases (Y4a) and (Y4b), since $v_1$ cannot have degree less than four.
We are left with the case that either the configuration described in \refclaim{leftcase1} appears, or
we have the following:
\claim{leftcase2}{If the situation of \refclaim{leftcase1} does not occur, then {\rm (Y2)} holds, $\ell=2$, the common neighbor $v$ of $v_1$ and $v_2$ is adjacent to $p_0$ and $v_s$, $|L(v_s)|=3$,
and there exists a vertex $w$ adjacent to $V(P)\cup \{v,v_s\}$.}
\end{itemize}

Since either \refclaim{leftcase1} or \refclaim{leftcase2} holds, we always have $\ell=2$ and there exists a vertex $w$ adjacent
to all vertices of $P$, where $w=v$ if  \refclaim{leftcase1} holds. In particular, no two vertices with list
of size three are adjacent in $G$ by \refclaim{cl-no33ifg2}, and $P$ has a unique central vertex.  Therefore, by symmetry we also have $|L(v_s)|=3$, $|L(v_{s-1})|=4$ and $w$ is either adjacent to $v_{s-1}$ and $v_s$, or
adjacent to $v_1$ and the common neighbor $v'$ of $v_{s-1}$, $v_s$ and $p_2$.
Observe that the outcome of \refclaim{leftcase2} contradicts the last conclusion, as $w$ in \refclaim{leftcase2} does not have a neighbor with a list of size four
(thus $w$ is not adjacent to $v_{s-1}$) and $v$ is the only neighbor of $w$ with a list of size five and $v$ is not adjacent
to $p_2$ (excluding the existence of $v'$).

Therefore, \refclaim{leftcase1} holds and $v$ is also adjacent to $v_{s-1}$ and $v_s$.  Let us choose $c_1\in S(v)$ and $c_2\in S(v_1)$ arbitrarily so that $c_1\neq c_2$.
Let $L'$ be the list assignment such that $L'(v_2)=L(v_2)\setminus \{c_2\}$, $L'(v)=\{c_1\}$ and $L'(z)=L(z)$ for any other vertex $z$.
Let $G'=G-\{p_1,p_0,v_1\}$ and $P'=p_2v$.  Note that each $L'$-coloring of $G'$ corresponds to an $L$-coloring of $G$, and thus $G'$ is not $L'$-colorable.
Let $G''$ be a $P'$-critical subgraph of $G'$.  The only possible adjacent vertices of $G''$ with lists of size three are $v_2$ and $v_3$.  Also, the
distance between $v$ and $X$ is at least $D(M,2)-1$.  If $vv_2\in E(G'')$, then $G''$ satisfies the assumptions of Lemma~\ref{lemma-albcorr},
and by the minimality of $G$, we have $X\cap V(G'')=\emptyset$.  However, then $G''$ contradicts Theorem~\ref{thm-thom}.

Finally, suppose that $vv_2\not\in E(G'')$.  Let $d$ be a new color that does not appear in any of the lists, and let $L''$ be the
list assignment obtained from $L'$ by replacing $c_1$ with $d$ in the lists of $v$ and its neighbors in $G''$ and by setting $L''(v_2)=L(v_2)\cup\{d\}$.
Observe that $G''+vv_2$ is not $L''$-colorable, and since no two vertices of $G''+vv_2$ with lists of size three are adjacent, we again
obtain a contradiction with the minimality of $G$ and Theorem~\ref{thm-thom}.
\end{proof}

\section{The conjecture of Albertson}
\label{sec-albertson}

In order to finish the proof of Theorem~\ref{thm-main}, it remains to show that Lemma~\ref{lemma-albimp} holds.
We are going to prove a stronger statement, giving a complete list of the critical graphs where we only forbid
the precolored vertex $x$ to be adjacent with a vertex with a list of size three.  Let us start with a simple observation.

\begin{lemma}\label{lemma-noout}
Let $G$ be a graph drawn in the plane, let $P$ be a path of length at most one contained in the boundary $H$ of the outer face of $G$, and
let $x$ be a vertex of $V(G)\setminus V(P)$.  Let $L$ be a $0$-valid list assignment for $G$ and $X$, where $X=\{x\}$,
and assume that $x$ is not adjacent to any vertex with a list of size three.
If $x\in V(H)$ or $x$ has neighbors only in $H$, then $G$ is $L$-colorable.
\end{lemma}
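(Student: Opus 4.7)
My plan is to reduce Lemma~\ref{lemma-noout} to Theorem~\ref{thm-thom} by fixing $x$'s color and deleting $x$ from $G$. Since $V(P)\cup\{x\}$ is $L$-colorable, pick an $L$-coloring $\psi$ of $V(P)\cup\{x\}$ and set $\alpha=\psi(x)$. Define a list assignment $L'$ on $G-x$ by $L'(v)=\{\psi(v)\}$ for $v\in V(P)$, $L'(v)=L(v)\setminus\{\alpha\}$ for $v\in N_G(x)\setminus V(P)$, and $L'(v)=L(v)$ otherwise. Any $L'$-coloring of $G-x$ combines with $\psi(x)=\alpha$ to give an $L$-coloring of $G$ (no neighbor of $x$ uses color $\alpha$), so it suffices to show that $G-x$ is $L'$-colorable.

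The key observation is that under either hypothesis of the lemma, every neighbor of $x$ ends up on the boundary of the outer face of $G-x$. If $x\in V(H)$, then all faces of $G$ incident with $x$ merge with the old outer face when $x$ is deleted, so every internal neighbor of $x$ becomes a vertex of the new outer face $H'$ of $G-x$. If instead $N_G(x)\subseteq V(H)$, then the outer face of $G-x$ is still $H$, which already contains $N_G(x)$. Consequently, each vertex that is internal in $G-x$ is not a neighbor of $x$ in $G$, hence $|L'(v)|=|L(v)|=5$. Each boundary vertex $v\notin V(P)$ has $|L'(v)|\ge 3$: if $v\notin N(x)$ then $L'(v)=L(v)$ has size at least $3$; if $v\in N(x)$ then $|L(v)|\ge 4$ by the assumption that $x$ has no neighbor with list of size three, so at least three colors remain after removing $\alpha$.

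To finish, apply Theorem~\ref{thm-thom} to each connected component $C$ of $G-x$ (trivial single-vertex components being colored directly). For the component containing $V(P)$: if $V(P)$ is an edge, use it directly as the precolored edge; if $V(P)=\{u\}$, extend by any boundary neighbor $w$ of $u$ in $C$ and precolor $w$ with some color in $L'(w)\setminus\{\psi(u)\}$ (nonempty since $|L'(w)|\ge 3$); if $V(P)=\emptyset$, choose any outer-face edge of $C$ and assign its endpoints compatible singleton lists drawn from their lists of size at least three. Components not meeting $V(P)$ (arising when $x$ is a cut vertex of $G$) are handled analogously to the $V(P)=\emptyset$ case. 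In each case the hypotheses verified in the previous paragraph imply that the list assignment is valid with respect to the chosen precolored edge, so Theorem~\ref{thm-thom} produces an $L'$-coloring of $C$; concatenating across components gives the desired $L'$-coloring of $G-x$.

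The only delicate point is the claim in the second paragraph that deleting $x\in V(H)$ promotes every internal neighbor of $x$ to the new outer face, thereby preserving the requirement in Theorem~\ref{thm-thom} that internal vertices have lists of size exactly $5$. This follows by inspecting the cyclic arrangement of faces around $x$ in the plane embedding, and is where the hypothesis ``$x\in V(H)$ or $N(x)\subseteq V(H)$'' is essential.
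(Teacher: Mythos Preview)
Your proof is correct and follows essentially the same approach as the paper: delete $x$, strip its color from the lists of its neighbors, observe that under either hypothesis all neighbors of $x$ lie on the outer face of $G-x$ so the resulting assignment is valid, and apply Theorem~\ref{thm-thom}. You are somewhat more careful than the paper about the technicalities of connectivity and of extending $P$ to a precolored edge when $\ell(P)<1$, but the core idea is identical.
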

\begin{proof}
Let $L'$ be the list assignment obtained from $L$ by removing $L(x)$ from the list of all its neighbors
and let $G'=G-x$.  Since $x$ is not adjacent to any vertex with a list of size three, we have $|L'(v)|\ge 3$ for each $v\in V(G)\setminus V(P)$.
Since $L$ is a $0$-valid assignment, $P$ is $L'$-colorable.  Furthermore, by the assumptions of the lemma, all the vertices with lists of size less than five
are incident with the outer face of $G'$.  Hence, $G'$ is $L'$-colorable by Theorem~\ref{thm-thom}, and this coloring extends to an $L$-coloring of $G$.
\end{proof}

We use a part of a result of Dvo\v{r}\'ak et al.~\cite{5choosfar} regarding the case that a path
of length three is precolored, but adjacent vertices with lists of size three are forbidden
(let us remark that this result can also be easily derived from Theorem~\ref{thm-prepathw} and the technique
used in the proof of Lemma~\ref{lemma-boundsize}).
An \emph{obstruction} is a plane graph with a prescribed subpath of its outer face boundary and
prescribed lengths of lists.  An obstruction $O$ \emph{appears} in a graph $G$ with the list
assignment $L$ and a specified path $P$ if $O$ is isomorphic to a subgraph of $G$
such that the prescribed subpath of $O$ corresponds to $P$ and the sizes of the lists given
by $L$ match those prescribed by $O$.  In figures, the full-circle vertices belong to the prescribed path (and their list sizes are not prescribed),
triangle vertices have lists of size three, square vertices have lists of size four and pentagonal vertices have lists of size five.

\begin{figure}
\begin{center}
\includegraphics[width=120mm]{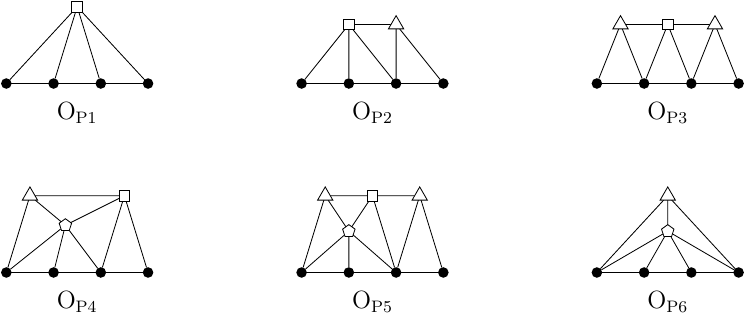}
\end{center}
\caption{Forbidden configurations of Theorem~\ref{thm-5choosfar}.}
\label{fig-obst}
\end{figure}

\begin{theorem}[Dvo\v{r}\'ak et al.~\cite{5choosfar}, Theorem 7]\label{thm-5choosfar}
Let $G$ be a graph drawn in the plane, let $P$ be a path of length at most three contained in the boundary $H$ of the outer face of $G$
and let $L$ be a valid list assignment for $G$ such that no two vertices with lists of size three are adjacent
and all vertices of $P$ have lists of size one.
If the following conditions hold, then $G$ is $L$-colorable:
\begin{itemize}
\item if a vertex $v$ has three neighbors $w_1,w_2,w_3$ in $V(P)$, then $L(v)\neq L(w_1)\cup L(w_2)\cup L(w_3)$, and
\item if $O$ is an obstruction depicted in Figure~\ref{fig-obst} that appears in $G$, then $O$ is $L$-colorable.
\end{itemize}
\end{theorem}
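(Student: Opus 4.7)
My plan is to argue by contradiction, taking a minimum counterexample $(G,P,L)$ with respect to the number of edges. By minimality, $G$ is $P$-critical with respect to $L$, so Theorem~\ref{thm-prepathw} gives the sharp bound $\omega_{P,L}(G)\le \ell(P)-2$, which severely restricts the structure. For $\ell(P)\le 1$ this bound forces $G=P$ (since $\omega\ge 0$ always), and Theorem~\ref{thm-thom} finishes the job. For $\ell(P)=2$, Lemma~\ref{lemma-extthom} classifies $P$-critical graphs as even fan processions with dangerous list assignment; the non-adjacency assumption on size-$3$ vertices rules out any two consecutive fan vertices, so the procession collapses to either $G=P$ or to a single fat fan of order $1$, the latter being precisely the situation in which a vertex has three neighbors in $V(P)$ and its list equals the union of their singleton lists, which is excluded by the first bulleted hypothesis.

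The heart of the proof is the case $\ell(P)=3$. Here $\omega_{P,L}(G)\le 1$, so $G$ is extremely structured: every inner face is a triangle, almost every vertex of $V(H)\setminus V(P)$ has list of size exactly $3$ (at most one has a size-$4$ list and none has a size-$5$ list, up to at most one unit of slack), and by \emph{ad hoc} splitting arguments (applied to chords and separating triangles and invoking induction on the two smaller pieces) we may assume $G$ is $2$-connected and has no chord of $H$ that separates $P$. Writing $P=p_0p_1p_2p_3$ and letting $v_1$ be the neighbor of $p_0$ on $H\setminus V(P)$, I would then perform a systematic reduction at $v_1$: if $|L(v_1)|\ge 4$, delete $p_0v_1$ and remove $\psi(p_0)$ from $L(v_1)$; if $v_1$ has a short chord to $P$, split along that chord; and otherwise recolor $v_1$ from $L(v_1)\setminus L(p_0)$ and appeal to the valid coloring of the smaller graph $G-v_1$. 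After each successful reduction the minimality of $G$ is contradicted, so one records exactly the configurations in which every reduction fails.

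The main obstacle is showing that the list of irreducible configurations arising in the $\ell(P)=3$ case matches precisely the finite set of obstructions drawn in Figure~\ref{fig-obst}, and nothing more. The difficulty is bookkeeping: with $\omega_{P,L}(G)\le 1$ the number of possible local patterns near $p_0$ and $p_3$ is small but not tiny, and one must verify for each pattern that either the reduction succeeds (using the non-adjacency condition on size-$3$ vertices to guarantee that the modified list assignment remains valid in the induction), or else a specific obstruction from Figure~\ref{fig-obst} is present. The first bulleted condition of the theorem is used at exactly the step where one would otherwise be stuck on a vertex with three neighbors in $P$ whose list is determined by the precoloring, and the explicit obstruction list absorbs the finitely many remaining non-reducible critical configurations.
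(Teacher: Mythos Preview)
This theorem is not proved in the paper; it is quoted from \cite{5choosfar} and used as a black box. The only in-paper remark is the parenthetical note that the result ``can also be easily derived from Theorem~\ref{thm-prepathw} and the technique used in the proof of Lemma~\ref{lemma-boundsize}'', which would proceed by bounding $|V(G)|$ via the construction in Lemma~\ref{lemma-boundsize} and then enumerating the finitely many critical graphs, rather than by the direct structural reductions you propose. So there is no in-paper argument to compare against; what follows are comments on your sketch itself.

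Your treatment of $\ell(P)\le 1$ is fine. For $\ell(P)=2$ there is a slip: the surviving configuration should be a \emph{fan} of order~$1$ (a single size-$3$ vertex adjacent to all three vertices of $P$), not a fat fan of order~$1$. A fat fan of order~$1$ has an internal vertex $y'$ of degree four with $|L(y')|=5$, and the paper's own analysis of fat fans shows such a graph is never $P$-critical. (Also, $G=P$ is impossible for a $P$-critical graph by definition.) With that correction the $\ell=2$ case goes through.

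The $\ell(P)=3$ case is where the content lies, and your outline is plausible but is not a proof. The reduction ``delete $p_0v_1$ and remove $\psi(p_0)$ from $L(v_1)$'' can create adjacent size-$3$ vertices (for instance when $|L(v_1)|=4$ and $|L(v_2)|=3$), so the induction hypothesis fails to apply directly and one must branch further; the ``colour $v_1$ and delete it'' move similarly can drop a size-$4$ neighbour to size~$3$ next to an existing size-$3$ vertex. Tracking every such branch and certifying that each dead end is precisely one of the obstructions in Figure~\ref{fig-obst} is the actual substance of the theorem, and you have not carried it out --- you have only asserted that it is finite bookkeeping. The assertion is true (indeed Lemma~\ref{lemma-boundsize} guarantees finiteness), but as written the proposal for $\ell=3$ is a plan rather than a proof.
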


Another special case of the result of Dvo\v{r}\'ak et al.~\cite{5choosfar} is the following:

\begin{theorem}[Dvo\v{r}\'ak et al.~\cite{5choosfar}, Theorem 7]\label{thm-onefour}
Let $G$ be a graph drawn in the plane, let $P$ be a path of length at most one contained in the boundary $H$ of the outer face of $G$,
let $w$ be a vertex in $V(G)\setminus V(H)$ and let $L$ be a list assignment for $G$ such that
$P$ is $L$-colorable,
all vertices in $V(G)\setminus V(H)$ other than $w$ have lists of size at least five,
all vertices in $V(H)\setminus V(P)$ have lists of size at least three and
no two vertices with lists of size three are adjacent.
If\/ $|L(w)|=4$, then $G$ is $L$-colorable.
\end{theorem}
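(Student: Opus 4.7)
The plan is to prove Theorem~\ref{thm-onefour} by induction on $|V(G)|$, following the standard Thomassen framework adapted to accommodate the single interior vertex $w$ with $|L(w)|=4$. Let $G$ be a minimum counterexample. After replacing the $L$-coloring of $P$ with lists of size one on $V(P)$, I may assume every vertex of $P$ is precolored.

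First I would carry out the usual preliminary reductions. Using Theorem~\ref{thm-thom} to dispense with subgraphs containing no vertex of $V(G)\setminus V(H)$ with a short list, and the induction hypothesis on the remaining side, I'd show that $G$ is $2$-connected, $H$ is an induced cycle with no chord, every internal face is a triangle, each vertex $v$ satisfies $\deg(v)\ge |L(v)|$, and $G$ has no separating triangle (a separating triangle $T$ would let us color the side not containing $w$ by Theorem~\ref{thm-thom} and then apply induction to the other side). In particular, $w\in V(G)\setminus V(H)$ has degree at least four, and Lemma~\ref{lemma-noout} forces $w$ to have at least one neighbor outside $V(H)$.

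The main case is when $w$ is at distance at least two from $H$. In that case I'd pick any vertex $v\in V(H)\setminus V(P)$ with appropriate local structure and perform a standard Thomassen reduction at $v$ (identify one or two colors from $L(v)$ to force on certain neighbors, delete $v$, and reduce the lists of its inside neighbors), yielding a smaller planar graph satisfying the hypotheses of Theorem~\ref{thm-onefour}. The key point is that the reduction takes place near $H$, far from $w$; $w$'s list of size four and the ``no two $3$-lists adjacent'' condition are both preserved, so the induction hypothesis closes this case.

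The harder case is when $w$ is adjacent to $H$, and especially when it is adjacent to $P$. Here I would reduce to Theorem~\ref{thm-5choosfar}: picking a shortest path $Q$ from $w$ to $P$ (of length one or two after the initial reductions) and precoloring $w$ with a color $c\in L(w)$ chosen to avoid conflicts with the colors already forced on $P$, I can cut $G$ along $P\cup Q$ to obtain a planar graph with a precolored path of length at most three whose list assignment satisfies the remaining hypotheses of Theorem~\ref{thm-5choosfar}. The main obstacle will be verifying that the resulting instance is not one of the obstructions depicted in Figure~\ref{fig-obst}: this requires a careful choice of $c\in L(w)$ (four options are available) and, if necessary, introducing a fresh color to relax any newly adjacent pair of $3$-lists, in the spirit of the gadget arguments used in the proof of Lemma~\ref{lemma-albcorr}. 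Each obstruction that could in principle appear forces a very rigid local structure around $w$, which can then either be $L$-colored directly or reduced further.
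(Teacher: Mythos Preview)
The paper does not prove Theorem~\ref{thm-onefour} at all: it is quoted verbatim as Theorem~6 of Dvo\v{r}\'ak et al.~\cite{5choosfar} and used as a black box (only once, near the end of the proof of Lemma~\ref{lemma-albimps}). So there is no proof in this paper to compare your proposal against.

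As for the proposal itself, the outline is in the right spirit but several steps are more delicate than you indicate. The ``standard Thomassen reduction at $v\in V(H)\setminus V(P)$'' does not automatically preserve the hypothesis that no two vertices with list of size three are adjacent; this is exactly why the paper's own reductions (the cases (Y1)--(Y4b) in the proof of Lemma~\ref{lemma-albcorr}) are so elaborate, and you would need a comparable case analysis here. In your ``harder case'', precoloring $w$ drops the lists of its internal neighbors from five to four, so those neighbors must all end up on the boundary after cutting along $P\cup Q$; when $w$ has an internal neighbor not on $Q$ this fails, and you have not explained how the cut is arranged to avoid that. Finally, verifying that none of the obstructions of Figure~\ref{fig-obst} arises after the cut is the substantive part of that case, and ``four options are available for $c$'' is not by itself enough, since several obstructions are robust to the choice of a single color. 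These are real gaps, not just omitted details.
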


We also use the following characterization of critical graphs with a short precolored face, given in \cite{bohmelc} (this can also
be easily derived from Lemma~\ref{lemma-preouf}).

\begin{lemma}[\cite{bohmelc}]\label{lemma-critface}
Let $G$ be a plane graph, let $H$ be the boundary of its outer face, and let $L$ be a list assignment such that $|L(v)|\ge 5$ for $v\in V(G)\setminus V(H)$.
If $H$ is an induced cycle of length at most six and $G$ is $H$-critical with respect to $L$,
then $|H|\ge 5$ and one of the following holds:
\begin{itemize}
\item $|V(G)\setminus V(H)|=1$, or
\item $|H|=6$ and $V(G)\setminus V(H)$ consists of two adjacent vertices of degree five, or
\item $|H|=6$ and $V(G)\setminus V(H)$ consists of three pairwise adjacent vertices of degree five.
\end{itemize}
\end{lemma}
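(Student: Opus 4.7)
The plan is to derive the lemma from Lemma~\ref{lemma-preouf} combined with Euler-style counting and a few structural observations from earlier in the paper. Since $H$ is an induced cycle, $G$ has no chord of $H$, so the hypothesis ``$G$ is not $H$ with one added chord'' of Lemma~\ref{lemma-preouf} holds automatically. Write $k=|H|$ and $n=|V(G)\setminus V(H)|$; since $G$ is $H$-critical with $G\ne H$ and $H$ is chordless, we have $n\ge 1$. Lemma~\ref{lemma-preouf} then gives
$$
  \omega_{H,L}(G)+\frac{n}{2k+2}\le k-9/2.
$$
For $k\le 4$ the right-hand side is negative, so $k\ge 5$. For $k=5$ the inequality forces $\omega_{H,L}(G)=0$ (equivalently, every non-outer face is a triangle and no vertex of $H$ is a cut-vertex) and $n\le 6$. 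For $k=6$ it leaves either $\omega_{H,L}(G)=0$ with $n\le 21$ or $\omega_{H,L}(G)=1$ (one quadrilateral inner face) with $n\le 7$.

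Before the casework, I would establish two structural facts. First, $G$ has no separating triangle $T$: otherwise Lemma~\ref{lemma-crs} together with Lemma~\ref{lemma-extthom} would force $\cin_T(G)$ to be a non-trivial fan procession with some interior vertex of list size three, contradicting $|L(v)|\ge 5$. Similarly, $G$ has no separating $4$-cycle $C$: applying Lemma~\ref{lemma-preouf} to $\cin_C(G)$ yields a contradiction with negative right-hand side. Second, $H$-criticality implies that every interior vertex $v$ satisfies $\deg(v)\ge |L(v)|\ge 5$, and Lemma~\ref{lemma-crittw} guarantees (since $H$ is induced) that at least one interior vertex has three or more neighbors in $H$.

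Combining these with Euler's formula for the near-triangulation yields $E=3n+2k-3$ and
$$
  \sum_{v\notin V(H)}\deg(v)=6n+2k-6-E_{HI},
$$
where $E_{HI}$ counts the $H$-to-interior edges; hence $E_{HI}\le n+2k-6$. A short case analysis on $n$ in each remaining case then forces the claimed structures. For $k=5$, $n\in\{2,3,4\}$ are excluded by the combined degree and planarity constraints (e.g., $n=2$ requires $E_{HI}\ge 8>n+4=6$; $n=4$ forces the interior to be $K_4$, trapping one vertex inside a triangle so that it cannot reach $H$), and $n\in\{5,6\}$ fall to similar planarity-trapping arguments, leaving only the wheel $W_5$. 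For $k=6$ the analogous analysis pins $n\in\{1,2,3\}$ and forces exactly the two extra configurations in the statement; the quadrilateral $\omega=1$ subcase reduces to the triangulated one after splitting along the quadrilateral face. The main obstacle will be executing this casework carefully --- verifying that every candidate interior graph on few vertices either has a vertex whose five required neighbors cannot all be realized (because it lies behind a small separating cycle) or collapses to one of the three listed configurations.
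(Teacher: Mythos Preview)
The paper does not actually prove this lemma; it is quoted from \cite{bohmelc} with the remark that it ``can also be easily derived from Lemma~\ref{lemma-preouf}''. Your plan follows that hint, and the opening moves are correct: the inequality of Lemma~\ref{lemma-preouf} immediately gives $|H|\ge 5$, forces $\omega_{H,L}(G)=0$ when $|H|=5$, and bounds $n$ in each case. One small slip: Lemma~\ref{lemma-extthom} is about precolored \emph{paths} of length two, not cycles, so it is the wrong citation for ruling out separating triangles. The conclusion is still correct --- a separating triangle $T$ makes $\cin_T(G)$ $T$-critical by Lemma~\ref{lemma-crs}, and Lemma~\ref{lemma-preouf} then gives the absurd bound $|T|-9/2<0$ --- so this is easily repaired.

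The genuine gap is the case analysis. The structural facts you extract --- near-triangulation, no separating $3$- or $4$-cycle, all interior degrees $\ge 5$, and some interior vertex with three $H$-neighbours --- do \emph{not} by themselves force $n\le 3$ when $|H|=6$. Take $H=h_1\cdots h_6$ and interior vertices $a,b,c,d$ with $c\sim h_1,h_2,h_3,a,b$; $a\sim h_3,h_4,b,c,d$; $d\sim h_4,h_5,h_6,a,b$; $b\sim h_6,h_1,a,c,d$. This is a near-triangulation with every interior vertex of degree $5$, no separating triangle or $4$-cycle, and $c$ has three $H$-neighbours --- yet $n=4$. (It is not $H$-critical: after deleting $H$-colours the interior is $K_4$ minus the edge $cd$, with lists of sizes $\ge 3,3,2,2$; since $K_4-e$ is a single $2$-connected block that is neither a clique nor an odd cycle, it is degree-choosable.) So ``a short case analysis on $n$'' using only your extracted hypotheses cannot succeed; criticality must be invoked again inside the casework. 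The clean way to finish is inductively: use Lemma~\ref{lemma-crittw} to find $v$ with at least three neighbours in $H$, note that the short cycles through $v$ are faces (no separating $3$- or $4$-cycles), and apply Lemma~\ref{lemma-crs} to recurse on the interior of any remaining $5$- or $6$-cycle through $v$ --- exactly the mechanism of the proof of Lemma~\ref{lemma-preouf}. With that recursion the three configurations fall out in a few lines; the bare Euler/trapping count does not get there on its own.
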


Let us now proceed with a strengthening of Lemma~\ref{lemma-albimp}.

\begin{figure}
\begin{center}
\includegraphics[width=120mm]{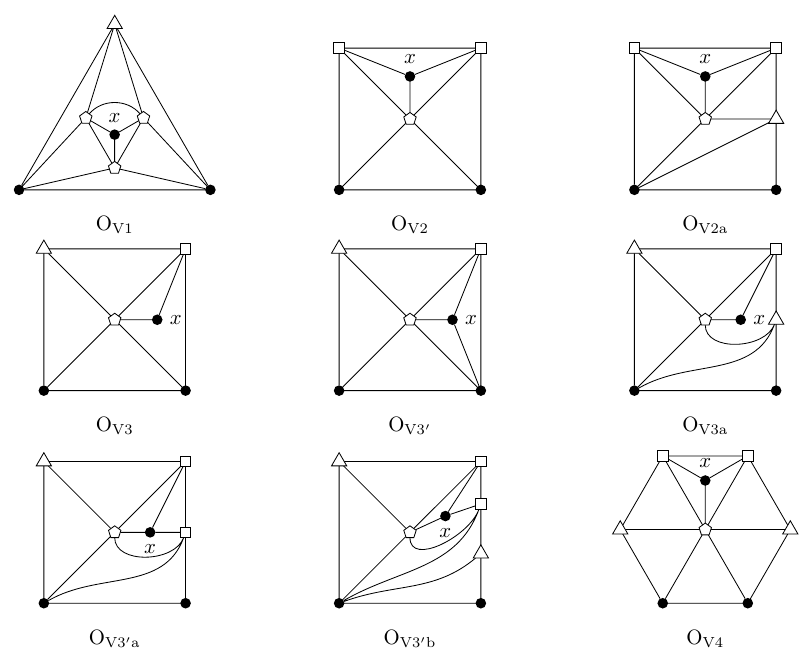}
\end{center}
\caption{Forbidden configurations of Lemma~\ref{lemma-albimps}.}
\label{fig-albimpob}
\end{figure}

\begin{lemma}\label{lemma-albimps}
Let $G$ be a graph drawn in the plane, let $P$ be a path of length at most one contained in the boundary $H$ of the outer face of\/ $G$ and
let $x$ be a vertex of\/ $V(G)\setminus V(P)$.  Let $L$ be a $0$-valid list assignment for $G$ and $X$, where $X=\{x\}$.
If the following conditions hold, then $G$ is $L$-colorable:
\begin{itemize}
\item no two vertices with lists of size three are adjacent,
\item $x$ is not adjacent to any vertex with a list of size three, and
\item if $O$ is an obstruction drawn in Figure~\ref{fig-albimpob} that appears in $G$ (with the marked vertex corresponding to $x$),
then $O$ is $L$-colorable.
\end{itemize}
\end{lemma}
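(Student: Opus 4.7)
My plan is to prove Lemma~\ref{lemma-albimps} by induction on $|V(G)|+|E(G)|$, taking a minimum counterexample $G$ and deriving a contradiction through a structural analysis. The general approach parallels the proofs of Theorem~\ref{thm-5choosfar} and Lemma~\ref{lemma-albcorr}: identify a small configuration near the precolored vertex $x$, either reduce to a smaller instance to which induction or a previously proved theorem applies, or else conclude that one of the explicit obstructions of Figure~\ref{fig-albimpob} must be present, contradicting the uncolorability of $G$.

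I would begin with standard reductions on the minimum counterexample. Using the induction hypothesis together with criticality-style arguments, one shows that $G$ is $2$-connected, that the outer face $H$ is bounded by an induced cycle, that every vertex $v\in V(G)\setminus(V(H)\cup\{x\})$ has degree at least $|L(v)|$, and that every short separating cycle is tightly controlled (any such cycle either bounds one of the listed obstructions, or splits off a region that can be colored by induction). By Lemma~\ref{lemma-noout}, $x\notin V(H)$ and $x$ has at least one neighbor in $V(G)\setminus V(H)$; in particular the precolored set consists of the short boundary path $P$ together with the interior vertex $x$, which we must link together.

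The main reduction is to push the precolored vertex $x$ to the boundary. Let $R=xu_1\dots u_k$ be a shortest path from $x$ to a vertex of $V(H)\setminus V(P)$, and cut along $R$ as defined in Section~\ref{sec-albimp}. The resulting graph $G^R$ has two copies of $R$ meeting at $x$ on its outer face, and together with $P$ and an appropriate arc of $H$ they form a precolored path $P'$ on the boundary of $G^R$. When $\ell(P')\le 3$, I would apply Theorem~\ref{thm-5choosfar} to $G^R$: the non-adjacent-$3$-list hypothesis is inherited from $G$ (cutting creates no new adjacencies), and any appearance of an obstruction of Figure~\ref{fig-obst} in $G^R$ projects to a configuration in $G$ matching one drawn in Figure~\ref{fig-albimpob}, hence is $L$-colorable by hypothesis. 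When $\ell(R)$ is too large for $\ell(P')\le 3$, the separating cycle formed by $R$ together with a portion of $H$ must be exploited: either induction applied to the inner subgraph split off reduces the problem, or one applies Theorem~\ref{thm-onefour} to an appropriate subgraph (for instance by promoting a single neighbor of $x$ to list size four) together with Lemma~\ref{lemma-critface} to control the possible short separating cycles around $x$.

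The principal technical difficulty, and the step I expect to be the main obstacle, is the bookkeeping required to verify that every bad configuration arising from either the cutting along $R$ or from a failed application of Theorems~\ref{thm-5choosfar} and~\ref{thm-onefour} corresponds to exactly one of the obstructions drawn in Figure~\ref{fig-albimpob}. This requires enumerating, case by case, the possible local structures near $x$ and the short path $P$, and checking in each case that either the resulting configuration is colorable (contradicting the counterexample assumption) or it is one of the forbidden configurations (contradicting the hypothesis that every such obstruction is $L$-colorable). Because the set of obstructions in Figure~\ref{fig-albimpob} is finite and each has a small, explicit local structure, this case analysis is finite but delicate, and accounts for most of the work of the proof.
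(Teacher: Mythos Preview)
Your proposal has a genuine gap in the main reduction. Cutting along a shortest path $R$ from $x$ to $V(H)$ does \emph{not} produce a precolored path $P'$ on the outer face of $G^R$: the split copies of the internal vertices $u_1,\dots,u_{k-1}$ inherit their original lists of size five, and the arc of $H$ between $u_k$ and $P$ consists of vertices with lists of size at least three. None of these are precolored, so Theorem~\ref{thm-5choosfar} does not apply with the path you describe. More fundamentally, an $L^R$-coloring of $G^R$ yields an $L$-coloring of $G$ only when the two copies of each $u_i$ receive the same color, and nothing guarantees this; the cutting device in Section~\ref{sec-albimp} is used to transfer \emph{criticality} (so that the induction hypothesis about $X$ can be invoked), not to produce colorings. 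There is also no a priori bound on $\ell(R)$ in a minimal counterexample, so your fallback for ``$\ell(R)$ too large'' would have to carry essentially the entire argument, and the sketch you give there (an unspecified separating cycle, Theorem~\ref{thm-onefour}, Lemma~\ref{lemma-critface}) is not a workable plan.

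The paper's proof proceeds very differently. After standard reductions it shows, through a sequence of structural claims, that $x$ has no neighbor in $H$ at all. It then reuses the boundary set $Y\subseteq V(H)\setminus V(P)$ and the partial coloring $\vf$ from the proof of Lemma~\ref{lemma-albcorr}: one removes $Y$, passes to the reduced lists $L_\vf$, and takes a $P$-critical subgraph $G_\vf$ of $G-Y$. The heart of the argument is to show that some neighbor $w$ of $x$ must be adjacent to two vertices of $\dom(\vf)$ (and, symmetrically, some $w'$ adjacent to two vertices of the mirror set $Y'$); this forces $x$ to sit at distance two from $H$ in a very constrained local picture. A final case analysis on the possible configurations of $w,w'$ and $Y\cap Y'$, using Lemma~\ref{lemma-extthom}, Lemma~\ref{lemma-critface}, and Theorem~\ref{thm-onefour}, then shows that $G$ is either $L$-colorable or equal to one of the obstructions in Figure~\ref{fig-albimpob}. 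The key mechanism is thus the $Y$-reduction on the boundary, not a cut from $x$ outward.
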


\begin{proof}
We can assume that $|L(p)|=1$ for $p\in V(P)$ and that $\ell(P)=1$.  Let $P=p_0p_1$.
Observe that in the process of reducing lists of vertices in $P$ (in order to be able to assume that $|L(p)|=1$), we may create a non-colorable obstruction from Figure~\ref{fig-albimpob}. However, $G$ contains at most one such obstruction $O$ (with the exception of \ob{V3} and \ob{V3'}, when we consider $O=\hbox{\ob{V3'}}$). Therefore, we can always take the lists for vertices in $V(P)$ coming from an $L$-coloring of $O$.

For contradiction, assume that $G$ is a counterexample
with $|V(G)|+|E(G)|$ the smallest possible.  Note that $G$ is $P$-critical and connected.
By Lemma~\ref{lemma-noout}, we have $x\not\in V(H)$.

By Lemma~\ref{lemma-crs} and Theorem~\ref{thm-5choosfar}, every non-facial triangle in $G$ and every chord of $H$ separates $P$ from $x$.
Furthermore, by applying these results to $2$-chords, we obtain the following.

\claim{cl-v-2chord}{If a $2$-chord $Q=v_1v_2v_3$ of $H$ does not separate $P$ from $x$, then the subgraph of $G$ split off by $Q$
consists either of the edge $v_1v_3$, or of a single vertex with a list of size three adjacent to $v_1v_2v_3$.}

Suppose that $G$ contains a vertex cut of size one, and let $G_1$ and $G_2$ be the subgraphs of $G$ such that $G=G_1\cup G_2$,
$G_1$ and $G_2$ intersect in a single vertex $v$ and $P\subseteq G_1$.  Note that $x\in V(G_2)\setminus \{v\}$, and by
the minimality of $G$ and Lemma~\ref{lemma-crs}, we conclude that $G_2$ consists of the edge joining $v$ with $x$.
By Lemma~\ref{lemma-noout}, we have $v\not\in V(H)$.  It follows that $H$ is a cycle.

Let us now consider a chord $uv$ of $H$ and let $G_1$ and $G_2$ be the subgraphs of $G$ intersecting in $uv$,
where $P\subseteq G_1$.  By Lemma~\ref{lemma-crs} and the minimality of $G$, either $G_2$ is one of the graphs drawn in Figure~\ref{fig-albimpob},
or $V(G_2)=\{u,v,x\}$.  The latter is impossible by Lemma~\ref{lemma-noout}; hence, assume the former.
The inspection of these graphs shows that there exists only one proper $L$-coloring $\vf$ of the subgraph of $G$ induced by $\{u,v,x\}$ that does not extend to an $L$-coloring
of $G_2$ (let us recall that $|L(x)|=1$).  By symmetry, we can assume that $x$ is not adjacent to $u$ and that $u$ does not have a list of size three.
Let $G'$ be the graph obtained from $G_1$ by adding a new vertex $x'$ and the edge $ux'$, and if $vx\in E(G)$, then also
the edge $vx'$.  Let $c$ be a new color that does not appear in any of the lists and let $L'$ be
a list assignment for $G'$ defined as follows: $L'(x')=\{c\}$, $L'(u)=(L(u)\setminus \{\vf(u)\})\cup \{c\}$,
if $vx\in E(G)$, then $L'(v)=(L(v)\setminus L(x))\cup \{c\}$, otherwise $L'(v)=L(v)$, and $L'(w)=L(w)$ for
every $w\in V(G_1)\setminus\{u,v\}$.  Since each $L'$-coloring of $G'$ corresponds to an $L$-coloring of $G$,
it follows that $G'$ is not $L'$-colorable.  By the minimality of $G$, this is only possible if $u\in V(P)$ and $L(u)=\{\vf(u)\}$.
By Theorem~\ref{thm-5choosfar} applied to $G_1$ with respect to the path $P+uv$, we conclude that $G_1$ either is a triangle,
or $V(G_1)=\{p_0,p_1,v,w\}$ for some vertex $w$ with a list of size three adjacent to $p_0$, $p_1$ and $v$.
However, it is easy to check that the composition of $G_1$ with $G_2$ (an obstruction from Figure~\ref{fig-albimpob}) is either $L$-colorable or equal to one
of the obstructions in Figure~\ref{fig-albimpob}.  This is a contradiction.  Consequently:

\claim{cl-v-chords}{$H$ is an induced cycle.}

Suppose that the distance between $x$ and $P$ is $1$, say $xp_0\in E(G)$.  Observe that $xp_1\not\in E(G)$ by
Lemma~\ref{lemma-crs} and Theorem~\ref{thm-5choosfar}.  Let $G'$ be the graph obtained from $G$ by splitting $p_0$
to two vertices $p_0'$ and $p_0''$, where both $p_0'$ and $p_0''$ are adjacent to $x$, $P'=p_0'xp_0''p_1$ is a path in $G'$
and other neighbors of $p_0$ are divided between $p_0'$ and $p_0''$ in the planar way.
Note that $G'$ is $P'$-critical and we can apply Theorem~\ref{thm-5choosfar} for it.  Using \refclaim{cl-v-chords}, observe that $P'$ is an induced path and that
each vertex with a list of size three has at most two neighbors in $P'$, hence $G'$ is one of the graphs drawn in
Figure~\ref{fig-obst}.  Since each vertex distinct from $x$ is adjacent to at most one of $p_0'$ and $p_0''$ and $x$ is not adjacent to a vertex with a list of size three, it
follows that $G'$ is \ob{P4}.  But then $G$ is \ob{V3'}.
Therefore, we have:

\claim{cl-v-dist}{The distance between $x$ and $P$ is at least\/ $2$.}

\begin{figure}
\begin{center}
\includegraphics[width=30mm]{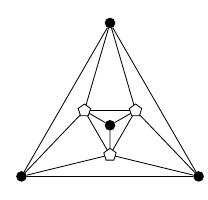}
\end{center}
\caption{Nontrivial separating triangle.}
\label{fig-septri}
\end{figure}

Consider a separating triangle $C=v_0v_1v_2$ in $G$, and let $G'=\cin_C(G)$ with the list assignment $L$.  Note that $G'$ is $C$-critical.
By Theorem \ref{thm-thom} it follows that $x\in V(G')\setminus V(C)$.
If $x$ is adjacent to say $v_0$, then $G'$ is bounded by the closed walk $xv_0v_1v_2v_0x$ of length $5$.
Formally, we split $v_0$ into two vertices $v_0'$ and $v_0''$ as we did with $p_0$ in the previous paragraph.
Observe that $|V(G')\setminus \{x,v_0,v_1,v_2\}|\neq 1$, since no vertex other than $x$ is adjacent to both $v_0'$ and $v_0''$
and all vertices in $V(G')\setminus \{x,v_0,v_1,v_2\}$ have degree at least five.  
Using Lemma~\ref{lemma-critface}, we conclude that $V(G')=\{v_0,v_1,v_2,x\}$.

Let us now consider the case that the distance between $C$ and $x$ is at least two.
Let $\vf$ be an $L$-coloring of $C$ that is obtained from an $L$-coloring of $G-x$. Then $\vf$ does not extend to an $L$-coloring of $G'$.
Let $L'$ be the list assignment such that $L'(v_0)=\{\vf(v_0)\}$, $L'(v_1)=\{\vf(v_1)\}$, $L'(v_2)=\{\vf(v_0),\vf(v_1),\vf(v_2)\}$
and $L'$ matches $L$ on the remaining vertices of $G'$.  Then $G'$ is not $L'$-colorable. By the minimality of $G$, \ob{V1}
appears in $G'$.  We conclude that $G'$ is the graph drawn in Figure~\ref{fig-septri}.  In that case, every $L$-coloring of $C$ different
from $\vf$ extends to an $L$-coloring of $G'$.  Since $G$ is not \ob{V1}, at least one vertex $w\in V(C)\setminus V(P)$ does not
have a list of size three.  Let $G''$ be the graph obtained from $G-(V(G')\setminus V(C))$ by adding a vertex $x'$ and the edge $x'w$.
Let $L''$ be the list assignment such that $L''(x')=\{\vf(w)\}$ and $L''$ matches $L$ on other vertices of $G''$.  Note that $G''$
is not $L''$-colorable, since each such coloring would extend to an $L$-coloring of $G$.  Furthermore, $x'$ has degree one,
hence $G''$ does not contain any of the obstructions.  Therefore, $G''$ contradicts the minimality of $G$.
This implies:

\claim{cl-v-septri}{If\/ $C$ is a separating triangle in $G$, then $V(\cin_C(G))=V(C)\cup\{x\}$.}

Let $p_1p_0v_1\ldots v_s$ be the facial walk of $H$.  Note that $s\ge 1$.
For $v\in V(G)\setminus (V(P)\cup \{x\})$, let 
$$S(v)=L(v)\setminus \bigcup_{\substack{u\in V(P)\cup \{x\}\\ uv\in E(G)}} L(u).$$
Observe that $|S(v)|=|L(v)|-k$, where $k$ is the number of neighbors of $v$ in $V(P)\cup \{x\}$, by the minimality of $G$.

If $s=1$, then note that $|S(v_1)|\ge 1$. Choose an arbitrary color $c\in S(v_1)$ and let $L'$ be the list assignment obtained from $L$ by
removing $c$ from the lists of neighbors of $v_1$.  Note that $G-v_1$ is not $L'$-colorable, and since it contains no vertices with list
of size three, by the minimality of $G$ we conclude that \ob{V2} appears in $G-v_1$.  But then $G$ is equal to \ob{V1}, which is a contradiction.
Therefore, $s\ge 2$.  As we observed before, $x\not\in V(H)$, and hence neither $v_1$ nor $v_2$ is equal to $x$.

Suppose that $|L(v_1)|=5$, or that $|L(v_1)|=4$ and $|L(v_2)|\ge 4$.  If $x$ is not adjacent to $v_1$, then let $y=p_0$, otherwise let $y=x$.
Let $L'$ be the list assignment obtained from $L$ by removing $L(y)$ from $L(v_1)$ and let $G'=G-yv_1$.
Observe that $G'$ is $P$-critical with respect to $L'$, and by the minimality of $G$, it is one of the graphs in Figure~\ref{fig-albimpob}.
This is not possible if $y=p_0$, since then $G'$ either is not $2$-connected or contains a vertex (a neighbor of $p_1$) with a list of size $5$ incident with the outer face.
If $y=x$, then $v_1$ and $x$ are not adjacent in $G'$, but the edge $v_1x$ can be added keeping the graph planar. This is only possible if $G'$ is either \ob{V3} or \ob{V3a}.
However, in such a case $G$ would be isomorphic to \ob{V2}, \ob{V2a} or \ob{V3'a}, and would be $L$-colorable by assumption.
We conclude the following.

\claim{cl-v-bsizes}{We have $s\ge2$, $|L(v_1)|\le 4$, and one of $v_1$ and $v_2$ has a list of size three; symmetrically, $|L(v_s)|\le 4$ and one of $v_s$ and $v_{s-1}$ has a list of size three.}

Next, we claim the following:

\claim{cl-v-common}{If the vertices $p_0$, $p_1$, $v_1$ and $v_2$ have a common neighbor $w$, then $w$ is adjacent to $x$.}

\begin{proof}
By \refclaim{cl-v-chords} and \refclaim{cl-v-dist}, we have $w\in V(G)\setminus (V(H)\cup\{x\})$.  Suppose that $w$ is not
adjacent to $x$.  By Lemma~\ref{lemma-noout} and \refclaim{cl-v-septri}, we conclude that $wp_1p_0$, $wp_0v_1$ and $wv_1v_2$
bound faces.  Let $G_1$ and $G_2$ be the subgraphs of $G$ intersecting in $p_1wv_2$ such that $P\subset G_1$.  Note that $x\in V(G_2)$,
$|L(v_1)|=3$, $|S(v_1)|=2$ and $|S(w)|=3$.  Choose $c\in S(w)\setminus S(v_1)$ arbitrarily.
Let $L'(w)=\{c\}$ and $L'(v)=L(v)$ for any $v\in V(G_2)\setminus \{w\}$.  Note that every $L'$-coloring of $G_2$ extends to an $L$-coloring of $G$, hence $G_2$ is not $L'$-colorable.  By the minimality of $G$ and \refclaim{cl-v-dist},
we conclude that one of the obstructions $K$ drawn in Figure~\ref{fig-albimpob} appears in $G_2$, with the precolored path
$p_1w$.  Note that $v_2\in V(K)$, as otherwise $G$ contains a $2$-chord contradicting \refclaim{cl-v-2chord}. Also, $|L'(v_2)|\ge 4$, as
$v_2$ is adjacent to $v_1$, which has a list of size three.  Therefore,
$K$ is one of \ob{V2}, \ob{V2a}, \ob{V3} or \ob{V3'a} (the case \ob{V3'} is excluded, since $x$ is adjacent neither to $w$ nor to $p_1$).
By \refclaim{cl-v-bsizes}, $K$ is not \ob{V2}.  Furthermore, $H$ is not
\ob{V3'a}, since $w$ has degree at least five.  In the remaining two cases, \refclaim{cl-v-chords}, \refclaim{cl-v-septri} and the assumption that $x$ is not
adjacent to a vertex with a list of size 3 imply that $K=G_2$. It is a simple exercise to check that the combination of $G_1$ with $K$ is $L$-colorable.  This is a contradiction.
\end{proof}

Suppose that a vertex of $P$, say $p_0$, has degree two in $G$.  We can assume that the color of $p_1$ only appears in the lists of its
neighbors, by replacing it with a new color if necessary.  Let $G'=G-p_0+p_1v_1$ and let $L'$ be the list assignment such that
$L'(v_1)=(L(v_1)\setminus L(p_0))\cup L(p_1)$ and $L'$ matches $L$ on other vertices.  By the minimality of $G$, we have that
$G'$ is $L'$-colorable; but this gives an $L$-coloring of $G$, which is a contradiction.
Therefore, both vertices of $P$ have degree at least three.

\claim{cl-v-inside}{The vertex $x$ has no neighbor in $H$.}

\begin{proof}
Suppose that $x$ has a neighbor $v\in V(H)$.  By \refclaim{cl-v-dist}, we have $v\not\in V(P)$.

Let us first consider the case that $p_0$, $p_1$ and $x$ have a common neighbor $w$.  For $i\in \{0,1\}$,
let $Q_i=p_iwxv$ and let $G_i$ be the subgraph of $G$ split off by $Q_i$.
Note that $G_i$ is $Q_i$-critical, and we can apply Theorem~\ref{thm-5choosfar} to it.
Suppose that each of $G_0$ and $G_1$ is among the graphs drawn in Figure~\ref{fig-obst} different from \ob{P6},
or consists of a vertex with a list of size three adjacent to $p_i$, $w$ and $v$,
or consists of an edge joining $v$ to $p_i$.
(Note that some configurations of Figure~\ref{fig-obst} are excluded since $x$ is not adjacent to a vertex with a list of size~3.) 
A straightforward case analysis shows that for any $c\in S(w)$, there exists at most one color $c'\in S(v)$
such that the $L$-coloring of $Q_i$ that assigns the color $c$ to $w$ and the color $c'$ to $v$ does not
extend to an $L$-coloring of $G_i$.  Since $|L(v)|\ge 4$, we conclude that $G$ is $L$-colorable, which is a contradiction.
Hence, we can assume that say $G_0$ does not satisfy this property; by Theorem~\ref{thm-5choosfar}, the following cases
are possible:
\begin{itemize}
\item $G_0$ contains the edge $vw$
and either the edge $p_0v$ or a vertex with a list of size three adjacent to $p_0$, $w$ and $v$; or,
\item $G_0$ is \ob{P6}.
\end{itemize}
In the former case, $vw\not\in E(G_1)$, since $G$ does not have parallel edges.  In the latter case,
if $vw\in E(G_1)$, then $G$ is easily seen to be $L$ colorable.  Therefore, we can assume that $vw\not\in E(G_1)$.
If $G_1$ is \ob{P6}, then all the combinations with the possible choices for $G_0$ result in an $L$-colorable graph.
Hence, for any $c\in S(w)$ there exists at most one color $c'\in S(v)$ such that the corresponding coloring does
not extend to an $L$-coloring of $G_1$.  If $G_0$ is \ob{P6}, this would imply that $G$ is $L$-colorable.
Therefore, $G_0$ contains the edge $vw$.  A straightforward case analysis shows that all the remaining combinations
of the choices for $G_0$ and $G_1$ result in \ob{V2}, \ob{V3}, \ob{V4} or in an $L$-colorable graph.
We conclude that $p_0$, $p_1$ and $x$ do not have a common neighbor.

Let $M$ be the set consisting of $V(P)$ and of all vertices with lists of size three adjacent to $P$, i.e., $M\subseteq\{v_s,p_1,p_0,v_1\}$.
Suppose that a vertex $w$ has at least three neighbors in $M$; note that $w\not\in V(H)$ by \refclaim{cl-v-chords}.
If $w$ is adjacent to $v_s$ and $p_0$, then since $x$ has a neighbor in $V(H)$, we have $x\not\in V(\cin_{p_0p_1v_sw}(G))$,
and since $p_1$ has degree at least three, Lemma~\ref{lemma-critface} implies that $w$ is also adjacent to $p_1$.
Hence, by symmetry, we can assume that $w$ is adjacent to $p_0$, $p_1$ and $v_1$.
By the previous paragraph, $w$ is not adjacent to $x$, and thus $|S(w)|=3>|S(v_1)|$.
Let $c$ be a color in $S(w)\setminus S(v_1)$.  Let $G'=G-p_0$ and let $L'$ be the list
assignment such that $L'(w)=\{c\}$, $L'(v_1)=S(v_1)\cup\{c\}$ and $L'$ matches $L$ on all other vertices.
Then $G'$ is not $L'$-colorable, and thus one of the configurations $K$ drawn in Figure~\ref{fig-albimpob} appears in $G'$.
Observe that \refclaim{cl-v-2chord}, \refclaim{cl-v-chords}, \refclaim{cl-v-septri} and \refclaim{cl-v-common} imply that $G'=K$.  Since $w$ has degree at least five
and $|L'(v_1)|=3$,
this is only possible if $K$ is \ob{V1}, \ob{V3a}, \ob{V3'a} or \ob{V3'b}.  However, the corresponding graph $G$ is easily seen to be $L$-colorable,
which is a contradiction.

Consequently, every vertex has at most two neighbors in $M$.  Furthermore, each vertex in $V(H)$ other than $v$ has at most
one neighbor in $M$, by \refclaim{cl-v-chords}.  Let $\theta$ be an $L$-coloring of the subgraph of $G$ induced by $M$.
If $v$ is the only neighbor of $x$ in $V(H)$, then
let $G'$ be the graph obtained from $G-M$ by splitting $v$ into two vertices $v'$ and $v''$
adjacent to $x$, with other neighbors of $v$ distributed between $v'$ and $v''$ in the planar way.  Let $L'$ be the list assignment
such that $L'(v')=L'(v'')$ consists of a single color in $S(v)$ distinct from the colors of the neighbors of $v$ in $M$ according
to $\theta$ and $L'(z)=L(z)\setminus\{\theta(t): t\in M, tz\in E(G)\}$ for any other vertex $z\in V(G')$.
If $x$ has at least two neighbors in $V(H)$, then by \refclaim{cl-v-2chord}, it has exactly two such neighbors $v'$ and $v''$
and $v'v''\in E(G)$.  In this case, let $G'=G-M-v'v''$, and let $L'$ be defined as before for vertices other than
$v'$ and $v''$, with the lists of $v'$ and $v''$ chosen to consist of a single color distinct from each other and the colors of their
neighbors in $M\cup\{x\}$.  Let $P'=v'xv''$.  Note that each vertex of $G'$ not in $P'$ has a list of size at least three, and all internal vertices
have lists of size five.  Since any $L'$-coloring of $G'$ together with $\theta$ would give an $L$-coloring of $G$,
no such $L'$-coloring exists. By Lemma~\ref{lemma-extthom}, $G'$ contains an even fan procession $F$ with base $P'$ and $L'$ is a dangerous
assignment for it.

Suppose first that $F$ is not a fan.  By Observation~\ref{obs-procprop}, there is at most one coloring of $P'$ that does not extend to an $L'$-coloring of $F$.
If there were at least two choices for the colors of the endvertices of $P'$, at least one of them would give a coloring of $G'$
extending to an $L$-coloring of $G$.  Therefore, there is only one choice, which is only possible if $v$ is the only neighbor of $x$ in
$H$ and $v$ has two neighbors in $M$.  In this case, $v'$ and $v''$ have the same color, hence $F$ is not a fat fan and $x$ has
a neighbor $z$ with $|L'(z)|=3$.  Since all neighbors of $x$ in $V(H)$ belong to $P'$,
we conclude that $z$ has two neighbors in $M$.  Let $C$ be the cycle formed by $z$, its neighbors in $M$ and the path between them in $M$;
by choosing the neighbors of $z$ as close to each other as possible, we can assume that $C$ is an induced cycle.
Note that $|C|\le 5$.  Since no vertex has more than two neighbors in $M$, Lemma~\ref{lemma-critface} applied to $\cin_C(G)$ with the list assignment $L$
implies that $C$ bounds a face of $G$.  Because vertices of $P$ have degree at least three and $x$, $p_0$ and $p_1$ do not have a common neighbor,
we conclude that $z$ is adjacent either to $p_0$ and $v_1$, or to $p_1$ and $v_s$.  By symmetry, we can assume the former.  Since there is only one
choice for the color of $v'$ and $v''$, we have $v=v_2$
and $V(H)=M\cup \{v_2\}$.  Since $F$ is not a fan, it has at least three vertices with lists of size three.  By planarity, we conclude that it has exactly three,
namely the vertex $z$, a vertex adjacent to $p_0$ and $p_1$, and
a vertex adjacent to $p_1$ and $v_s$, where $s=3$.  Consequently, $F$ consist of the triangle $xv'z$ and of a fat fan of order two.
See Figure~\ref{fig-fanandx}(a).
However, the corresponding graph $G$ is $L$-colorable.

\begin{figure}
\begin{center}
\includegraphics[width=120mm]{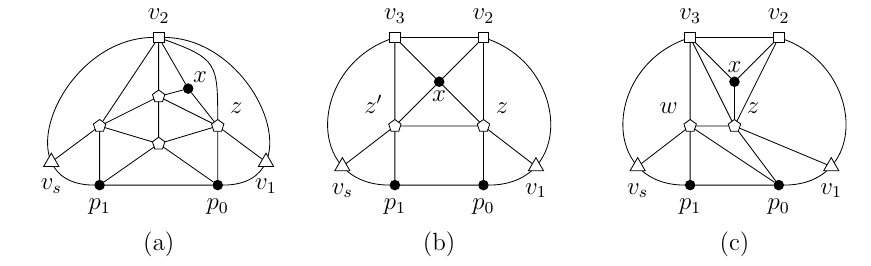}
\end{center}
\caption{Configurations from \refclaim{cl-v-inside}.}
\label{fig-fanandx}
\end{figure}

Therefore, $F$ is a fan.  Let $z$ be a neighbor of $x$ with $|L'(z)|=3$ that is also adjacent to the endvertex $v'$ of $P'$.
Again, $z$ has two neighbors in $M$ and we can assume that they are $p_0$ and $v_1$.
Let us now consider the case that $F$ has order at least two. Then, there exists a common neighbor $z'$ of $x$ and $v''$ distinct from $z$,
and $z'$ is adjacent to $p_1$ and $v_s$.  By \refclaim{cl-v-2chord}, we have that $v'$ is adjacent to $v_1$ and $v''$ is adjacent
to $v_s$.
We apply Lemma~\ref{lemma-critface} to the subgraph $\cin_{p_0zxz'p_1}(G)$ with list assignment $L$.
Since $p_0$, $p_1$ and $x$ do not have a common neighbor, we conclude that
the $5$-cycle $p_0zxz'p_1$ is not induced, and furthermore, that $p_0z',p_1z\not\in E(G)$.
Since both $z$ and $z'$ have degree at least five, it follows that $zz'\in E(G)$.
There are two cases depending on whether $s=3$ or $s=4$, but in both of them, the resulting graph $G$ is $L$-colorable. See Figure~\ref{fig-fanandx}(b) illustrating the case $s=4$.

We conclude that $F$ is a fan of order one; hence, $z$ is adjacent to both ends of $P'$.  It follows that $x$ has two neighbors ($v_2$ and $v_3$) in $H$.
If we have three possible choices for the list (color) of $v''=v_3$ in $L'$, then we can choose the list so that $F$ is $L'$-colorable and obtain an $L$-coloring of $G$.
This is a contradiction, hence $v_3$ has a neighbor in $M$.  Note that $s>3$; otherwise, since $p_1$ has degree at least three, the $4$-cycle
$p_1p_0zv_3$ would by Lemma~\ref{lemma-critface} applied to $\cin_{p_1p_0zv_3}(G)$ have a chord $p_1z$, contradicting the observation that $z$ has only two neighbors in $M$.  Therefore, we have $s=4$.
Similarly, since $p_1$ has degree at least three and it is not adjacent to $z$ or $v_3$, Lemma~\ref{lemma-critface}
applied to the interior of the $5$-cycle $p_1p_0zv_3v_4$ implies that 
there exists a vertex $w$ adjacent to $p_0$, $p_1$, $z$, $v_3$ and $v_4$.  
See Figure~\ref{fig-fanandx}(c).  However, $w$ has three neighbors in $M$, which we already excluded.
This completes the proof of \refclaim{cl-v-inside}.
\end{proof}

Let the set $Y$, its partial coloring $\vf$, and the list assignment $L_{\vf}$ be defined according to Definition~\ref{def-y}.
Note that $G-Y$ is not $L_{\vf}$-colorable, and thus $G-Y$ contains a $P$-critical subgraph $G_\vf$ (with respect to the
list assignment $L_\vf$).
Let us note that \refclaim{cl-v-chords} together with the choice of $Y$ implies that every vertex
$v\in V(G_{\vf})\setminus \{p_0,p_1,x\}$ satisfies $|L_\vf(v)|\ge 3$.

\claim{cl-v-main}{There exists a neighbor of $x$ adjacent to two vertices of $\dom(\vf)$.}

\begin{proof}
Note that $x$ is not adjacent to $Y$ or any other vertex of $H$ (\refclaim{cl-v-inside}). By excluding the conclusion of the claim,
$x$ is not adjacent to a vertex with a list of size three in $G_{\vf}$.  By Theorem~\ref{thm-thom}, we have $x\in V(G_{\vf})$.

\begin{itemize}
\item Let us first consider the situation
that $G_{\vf}$ contains two adjacent vertices $u$ and $v$ with $|L_\vf(u)|=|L_\vf(v)|=3$.  If $u,v\in V(H)$, then \refclaim{cl-v-chords}
implies that $uv\in E(H)$ and the choice of $Y$ would ensure that either $|L_\vf(u)|>3$ or $|L_\vf(v)|>3$.  Hence, by symmetry we can
assume that $u\not\in V(H)$, and thus $u$ has two neighbors in $\dom(\vf)$, and one of (Y2), (Y4a) and (Y4b)
occurs.  If $v\not\in V(H)$, then $v$ is also adjacent to the two vertices $y_1,y_2\in \dom(\vf)$.  By \refclaim{cl-v-septri}, we have $y_1y_2\not\in E(G)$,
hence (Y4b) happens, $y_1=v_1$ and $y_2=v_3$.  Choose vertices $u$ and $v$ so that the disk bounded by the $4$-cycle $C=v_1v_2v_3u$ is as small as possible.
By Theorem~\ref{thm-5choosfar}, we have $x\in V(\cin_C(G))\setminus V(C)$.  However, $u$ is not adjacent to $x$ and forms a vertex cut in $G_{\vf}$.
Let $G_1$ and $G_2$ be the subgraphs of $G_{\vf}$ such that $G_1\cap G_2=u$, $G_1\cup G_2=G_\vf$, both $G_1$ and $G_2$ have
at least two vertices, $P\subset G_1$ and $x\in V(G_2)$.
By Theorem~\ref{thm-thom}, $G_1$ is $L_{\vf}$-colorable, and by the minimality of $G$, the precoloring of $u$ given by this
coloring extends to an $L_{\vf}$-coloring of $G_2$ (the choice of $u$ ensures that no two vertices with lists of size three
are adjacent in $G_2$).  Hence, $G_{\vf}$ is $L_{\vf}$-colorable.  This is a contradiction.

We conclude that there exists a vertex $u\in V(G)\setminus V(H)$
adjacent to two vertices in $\dom(\vf)$ and all edges joining vertices with lists of size three (according to $L_\vf$) are incident with $u$.
Furthermore, the other ends of these edges belong to $H$.  Choose a neighbor $v'$ of $u$ incident with the outer face of $G_{\vf}$ so that
$|L_\vf(v')|=3$ and the subgraph $G_2$ of $G_\vf$ split off by $uv'$ is as large as possible.
Note that all edges joining $u$ to vertices with lists of size three belong to $G_2$.
Let $G_1$ be the subgraph of $G_\vf$ such that $G_1\cup G_2=G_\vf$ and $G_1\cap G_2=uv'$
(we have $P\subseteq G_1$). Let $P_2=uv'$.  Note that $G_2\neq uv'$, since otherwise $G$ would contain a $2$-chord $Q$ consisting
of $uv'$ and a vertex in $\dom(\vf)$ contradicting \refclaim{cl-v-2chord}.  By Theorem~\ref{thm-thom},
we have that $x\in V(G_2)$.

Note that $G_2$ is $P_2$-critical with respect to $L_{\vf}$ and $x$ is not adjacent to any vertex of $P_2$.  Furthermore, no two vertices in $V(G_2)\setminus V(P_2)$
with lists of size three are adjacent.  Observe that $x$ is adjacent neither to $u$ nor to $v'$, thus by the minimality of $G$, we conclude that $G_2$
is equal to one of the graphs drawn in Figure~\ref{fig-albimpob}.  In particular, there exists a unique coloring $\psi$ of $P_2$ that does not extend to an $L_{\vf}$-coloring of
$G_2$.

Suppose that $u$ and $v'$ do not have a common neighbor $w$ in $G_1$ with $|L_{\vf}(w)|=4$. 
Note that by \refclaim{cl-v-2chord} and \refclaim{cl-v-common}, $u$ is not adjacent to both vertices of $P$.
Let $c_u\in L_{\vf}(u)$ be a color that is 
different from $\psi(u)$ and different from the color of a neighbor of $u$ in $P$, if it has any.  Let $c_{v'}\in L_{\vf}(v')$ be a color different from $c_u$
and different from the color of a neighbor of $v'$ in $P$, if it has any.  Let $G'_1=G_1-\{u,v'\}$ with the list $L_1$ obtained from $L_{\vf}$ by removing $c_u$ from the
lists of neighbors of $u$ and $c_{v'}$ from the lists of neighbors of $v'$.  By the choice of $v'$ and the assumption that $u$ and $v'$ have no common neighbor with a list of size 4, 
it follows that every vertex of $V(G'_1)\setminus V(P)$ has a list of size at least three.  By Theorem~\ref{thm-thom}, we conclude that
$G_1$ has an $L$-coloring such that the color of $u$ is not $\psi(u)$.  However, this coloring extends to an $L_{\vf}$-coloring of $G_{\vf}$,
which is a contradiction.

Therefore, we can assume that $u$ and $v'$ have a common neighbor $w\in V(G_1)$ with $|L_{\vf}(w)|=4$.  
If $w\in V(H)$, then \refclaim{cl-v-chords} and \refclaim{cl-v-septri} imply that $v'$ has degree two in $G_1$.
On the other hand, if $w\not\in V(H)$, then $w$ has a neighbor in $\dom(\vf)$ and \refclaim{cl-v-septri} implies
that $u$ has degree two in $G_1$.  In the former case, let $q_0=v'$ and $q_1=u$, in the latter case let $q_0=u$ and $q_1=v'$.
Note that $q_0$ has no neighbor in $P$.
Let $c$ be a color in $L_{\vf}(q_0)\setminus\{\psi(u),\psi(v')\}$.  By Theorem~\ref{thm-thom}, there exists an $L$-coloring of $G_1-q_0$ such that
the color of $w$ is not $c$.  Observe that this coloring extends to an $L_{\vf}$-coloring of $G_1$ such that either $q_0$ or $q_1$
is colored by $c$.  This coloring extends to an $L_{\vf}$-coloring of $G_{\vf}$, which is a contradiction.

\item Therefore, we can assume that no two adjacent vertices of $G_{\vf}$ have lists of size three according to $L_{\vf}$.  By the minimality of $G$
and \refclaim{cl-v-dist}, we conclude that $G_{\vf}$ is one of the graphs drawn in Figure~\ref{fig-albimpob} (except \ob{V3'} which has vertex $x$ adjacent to $P$).
Let us discuss the possible cases for $G_{\vf}$ separately.

\begin{itemize}
\item If $G_\vf$ is \ob{V1}, \ob{V2a}, \ob{V3a} or \ob{V3'b}, then let $w$ be the vertex with $|L_{\vf}(w)|=3$ that is adjacent to both vertices of $P$.
By \refclaim{cl-v-chords}, we have that $w\in V(G)\setminus V(H)$, and thus $w$ is adjacent to two vertices in $\dom(\vf)$.
By \refclaim{cl-v-2chord}, these vertices are $v_1$ and $v_2$.  However, that contradicts \refclaim{cl-v-common}.

\item If $G_{\vf}$ is \ob{V2} or \ob{V3}, then $G$ contains a path $Q=p_0w_1w_2p_1$
corresponding to the outer face of $G_{\vf}$.  By \refclaim{cl-v-chords},
at most one of $w_1$ and $w_2$ belongs to $H$.  If say $w_1$ belongs to $H$,
then by \refclaim{cl-v-2chord}, $G$ consists of $G_{\vf}$ and a vertex with
a list of size three adjacent to $w_1$, $w_2$ and $p_1$.  However, such a graph
is $L$-colorable.  Therefore, neither $w_1$ nor $w_2$ belongs to $H$.
Let $F$ be the subgraph of $G$ split off by $Q$.
Since $s\ge 2$, $F$ has at least two vertices not belonging to $Q$, and by
Theorem~\ref{thm-5choosfar}, $F$ is \ob{P2}, \ob{P3}, \ob{P4} or \ob{P5}.
If $F$ is \ob{P2} or \ob{P4}, then we can assume that $Y=\{v_1\}$ is the
vertex with a list of size three; but then $v_1$ is adjacent to at most
one of $w_1$ and $w_2$, contrary to the fact that $|L_{\vf}(w_i)|<|L(w_i)|$ for $i\in\{1,2\}$.
Similarly, if $F$ is \ob{P5}, then we can assume that $Y=\{v_1,v_2\}$ is not adjacent to
at least one of $w_1$ and $w_2$, which is again a contradiction.  If $F$ is \ob{P3},
then $G_{\vf}$ is not \ob{V2}, since we assume that the common neighbor of the two vertices
in $Y$ is not adjacent to $x$. The final possibility, the combination of \ob{P3} and \ob{V3} does not result in a $P$-critical graph.

\item Suppose now that $G_{\vf}$ is \ob{V3'a}.  Let $p_iw_1w_2w_3p_{1-i}$ (for some $i\in\{0,1\}$)
be the subpath of $G$ corresponding to the outer face of $G_{\vf}$, where $|L_{\vf}(w_1)|=3$.
By \refclaim{cl-v-inside}, we have $w_2,w_3\not\in V(H)$.

Suppose that $w_1\not\in V(H)$.  Then $w_1$ is
adjacent to two vertices $y_1, y_2\in \dom(\vf)$, and by \refclaim{cl-v-2chord}, $p_iy_1y_2$ is a subpath of $H$
and $|L(y_1)|=3$.  Observe that $Y=\{y_1,y_2\}$, and thus $w_2$ and $w_3$ are adjacent to $y_2$.
By \refclaim{cl-v-2chord} applied to $y_2w_3p_{1-i}$, we conclude that $y_2$, $w_3$ and $p_{1-i}$
have a common neighbor with a list of size three (if $y_2$ were adjacent to $p_{1-i}$,
we would have chosen $Y=\{y_1\}$).
However, the resulting graph is $L$-colorable.  

Therefore, we have $w_1\in V(H)$.
By Theorem~\ref{thm-5choosfar}, the subgraph $F$ of $G$ split off by $Q=w_1w_2w_3p_{1-i}$
either consists of a vertex $z$ with a list of size three adjacent to all vertices of $Q$ ($z$ cannot
be adjacent to only three vertices of $Q$, since $Y=\{z\}$, $|L_\vf(w_2)|<|L(w_2)|$, and $|L_\vf(w_3)|<|L(w_3)|$),
or is equal to one of the graphs
drawn in Figure~\ref{fig-obst}. In the former case, $G$ is $L$-colorable.  In the latter case, the choice of $Y$
shows that $F$ is not \ob{P1}, and since $w_2$ and $w_3$ are adjacent to a vertex in $Y$, $F$ is not \ob{P4}, \ob{P5} or
\ob{P6}.  If $F$ is \ob{P3}, then the assumption that no neighbor of $x$ has two neighbors in $\dom(\vf)$ is violated,
and similarly we exclude the case that $F$ is \ob{P2} and $|L(v_1)|=3$.  The case that $F$ is \ob{P2} and $|L(v_1)|=4$
is excluded as well, since then $Y=\{v_2\}$ and $w_3$ is not adjacent to any vertex in $Y$.

\item Finally, suppose that $G_{\vf}$ is \ob{V4} and let $p_0w_1w_2w_3w_4p_1$ be the subpath of $G$ corresponding
to the outer face of $G_{\vf}$. By \refclaim{cl-v-inside}, we have $w_2,w_3\not\in V(H)$.

If $w_1\not\in V(H)$,
then $w_1$ is adjacent to two vertices $y_1,y_2\in \dom(\vf)$ and $y_2$ is also adjacent to $w_2$ and $w_3$.
In this case, $w_4$ cannot have two neighbors in $\dom(\vf)$, and since $|L_\vf(w_4)|=3$,
it follows that $w_4$ belongs to $V(H)$.  By \refclaim{cl-v-2chord}, either $y_2$ is adjacent to $w_4$, or
$w_4$, $w_3$ and $y_2$ have a neighbor with a list of size three.  However, in both cases $G$ would be $L$-colorable.

We conclude that $w_1\in V(H)$, and symmetrically $w_4\in V(H)$.  Let $F$ be the subgraph of $G$ split off by $w_1w_2w_3w_4$.
By Theorem~\ref{thm-5choosfar}, $F$ is either one of the graphs depicted in Figure~\ref{fig-obst}
or (taking into account that $|L_\vf(w_2)|<|L(w_2)|$ and $|L_\vf(w_3)|<|L(w_3)|$) consists of a vertex with a list of size three adjacent to $w_1$, $w_2$, $w_3$ and $w_4$.
In the latter case, $G$ is $L$-colorable; hence, consider the former.
Since both $w_2$ and $w_3$ are adjacent to a vertex in $Y$, $F$ is not \ob{P4}, \ob{P5} or \ob{P6}.
By the choice of $Y$, $F$ is not \ob{P1}.  And, if $F$ is \ob{P2} or \ob{P3}, then $x$ and two vertices
in $\dom(\vf)$ have a common neighbor.
\end{itemize}
\end{itemize}
\end{proof}

Let us consider a set $Y'\subseteq\{v_s,v_{s-1},\ldots\}$ and its partial coloring $\vf'$ chosen on the other side of $P$
by rules symmetric to the ones used to select $Y$ and $\vf$.  Clearly, the statement symmetric to \refclaim{cl-v-mains} holds:

\claim{cl-v-mains}{There exists a neighbor of $x$ adjacent to two vertices of $\dom(\vf')$.}

Let $w$ be the common neighbor of $x$ and two vertices $y_1,y_2\in Y$, where $|L(y_2)|=4$.
Let $w'$ be the common neighbor of $x$ and two vertices in $y'_1,y'_2\in Y'$, where $|L(y'_2)|=4$.

\claim{cl-v-choicey}{We can choose $y_1$ and $y'_1$ so that $|L(y_1)|=|L(y'_1)|=3$ and $y_1y_2, y'_1y'_2\in E(G)$.}
\begin{proof}
This only needs to be discussed in the case (Y4b), where $w$ could be a neighbor of $v_1$ and $v_3$,
but not $v_2$, and by \refclaim{cl-v-2chord}, $x$ would be contained inside the $4$-cycle $v_1v_2v_3w$ together with a common neighbor $z$ of
$v_1$, $v_2$ and $v_3$.  In that case, planarity implies that $w=w'$.
The choice of $Y$ implies that $Y'\neq Y$ (as we would then have $s=3$ and we would be in case (Y3)).
By \refclaim{cl-v-2chord} (applied to the 2-chord $y_1' w v_3$ and noting that $|L(y_2')|=4$), we have $y'_2=v_3$, $v_4$ is adjacent to $w$, and $y_1'$ is either $v_4$ or $v_5$.

Let $F$ be the subgraph of $G$
drawn inside the $4$-cycle $v_1v_2v_3w$.  Let $F'$ be the graph obtained from $F$ by splitting $w$ into two
vertices $w_1$ and $w_2$ adjacent to $x$ and by distributing the other neighbors of $w$ between $w_1$ and $w_2$
in the planar way.  Let $K=v_1v_2v_3w_1xw_2$ be the cycle bounding the outer face of $F'$, and note that $F'$ is $K$-critical
with respect to $L$.  By \refclaim{cl-v-inside}, $x$ is not adjacent to $v_1$ or $v_3$.  We conclude that
$K$ is an induced cycle.  No vertex of $F'$ other than $x$ is adjacent to both $w_1$ and $w_2$, hence
Lemma~\ref{lemma-critface} implies that $F-V(K)$ consists either of $z$ adjacent to $x$ and one of $w_1$ or $w_2$,
or of a triangle $zz_1z_2$, where $z_1$ is adjacent to $v_3$, $w_1$ and $x$, and $z_2$ is adjacent to $v_1$, $w_2$ and $x$.

In the latter case, note that $\deg_G(v_3)=5$ and observe that every $L$-coloring of $G-\{v_2,v_3,z, z_1,z_2\}$ extends
to an $L$-coloring of $G$, contrary to the assumption that $G$ is $P$-critical.  Therefore, assume that $z$ is the only vertex
of $V(F)\setminus V(K)$.  Choose a color $c\in S(z)\setminus L(v_2)$, let $G'=G-\{z,v_2\}$ and let
$L'$ be the list assignment obtained from $L$ by removing $c$ from the lists of $v_1$, $v_3$ and $w$.
Then $G'$ is $L'$-colorable by Lemma~\ref{lemma-noout} and this coloring extends to an $L$-coloring of $G$, which is a contradiction.
\end{proof}

\begin{figure}
\begin{center}
\includegraphics[width=120mm]{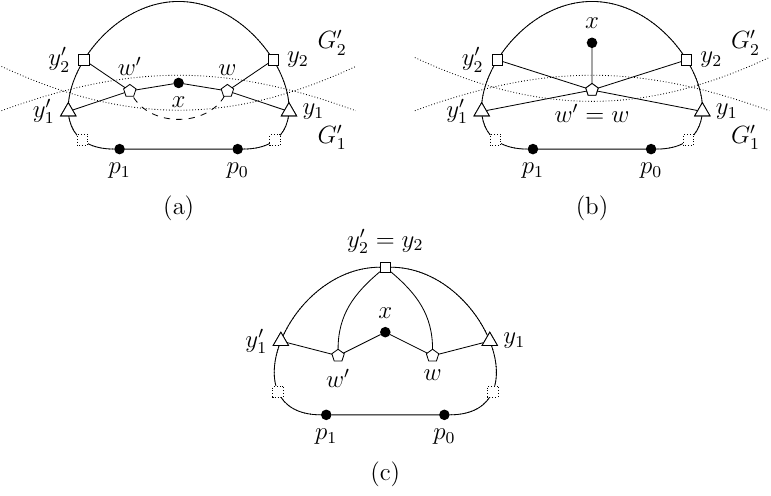}
\end{center}
\caption{Configrations following \refclaim{cl-v-choicey}.}
\label{fig-final}
\end{figure}

By the choice of $Y$ and $Y'$, note that $y_1\not\in \{y'_1,y'_2\}$ and $y'_1\not\in\{y_1,y_2\}$.  Furthermore, if $w=w'$, then \refclaim{cl-v-2chord}
implies that $x$ is contained in the subgraph of $G$ split off by $y_1wy'_1$.  If $w\neq w'$, then let $Q=Q_0=y_1wxw'y'_1$.
If $w=w'$, then let $Q$ be the star with center $w$ and rays $y_1$, $y'_1$ and $x$ and let $Q_0=y_1wy'_1$.  Let $G_2$ be the subgraph of $G$ split off by
$Q_0$ and let $G_1$ be the subgraph of $G$ such that $G_1\cup G_2=G$ and $G_1\cap G_2=Q_0$ (we have $P\subset G_1$).
Note that $y_2\not\in V(G_1)$.  If $w$ and $w'$ are adjacent in $G_2$, then let $G'_1=G_1+ww'$ and $G'_2=G_2-\{y_1,y'_1\}-ww'$,
otherwise let $G'_1=G_1$ and $G'_2=G_2-\{y_1,y'_1\}$.
See Figure~\ref{fig-final}(a) and (b).
Let $Q'=Q-\{y_1,y'_1\}$.  By the minimality of $G$, there exists an $L$-coloring $\theta$ of $G'_1$.

Suppose that $y_2\neq y'_2$.  We let $L_2$ be the list assignment such that $L_2(q)=\{\theta(q)\}$ for $q\in V(Q)$, $L_2(y_2)=L(y_2)\setminus\{\theta(y_1)\}$,
$L_2(y'_2)=L(y'_2)\setminus\{\theta(y'_1)\}$ and $L_2(v)=L(v)$ for all other vertices.  Note that all vertices of $G'_2-V(Q')$ have lists of size at least three.  Furthermore,
by \refclaim{cl-v-inside}, all neighbors of $x$ not in $Q'$ have lists of size five.  
Now we apply Lemma~\ref{lemma-extthom} to $G_2'$ and $L_2$. (If $w=w'$, we first split the edge $wx$ so that we obtain a precolored path of length 2.)
Lemma~\ref{lemma-extthom} implies that $G_2'$ contains an even fan procession for which $L_2$ is dangerous. In particular, since $x$ has no neighbors in $H$, we conclude that
$G'_2$ contains a fat fan of even order for which $L_2$ is a dangerous assignment.  Since $G$ does not have parallel edges, it follows that $w\neq w'$.  Since no two vertices with lists of size three are adjacent in $G$ and $ww'\notin E(G_2')$,
we conclude that the fat fan has order two and one of its vertices is $y_2$ (or $y_2'$).  Let $z\neq y_2$ be the vertex of the fat fan with $|L_2(z)|=3$.
By the choice of $Y$ and $Y'$, we have $z\neq y'_2$.  However, the $2$-chord $y'_1w'z$ then contradicts \refclaim{cl-v-2chord}.

We conclude that $y_2=y'_2$.  If $w=w'$, \refclaim{cl-v-septri} and \refclaim{cl-v-inside} imply that $w$ is the only neighbor of $x$.
However, Theorem~\ref{thm-onefour} then implies that $G$ is $L$-colorable.
It follows that $w\neq w'$.  By Lemma~\ref{lemma-critface} applied to $\cin_{w'xwy_2}(G)$ with list assignment $L$,
we have $V(\cin_{w'xwy_2}(G))=\{w',x,w,y_2\}$, and since $x$ is not adjacent to $y_2$ by \refclaim{cl-v-inside}
and $ww'\not\in E(G'_2)$ by the construction of $G'_2$, we conclude that
hence $G'_2$ is equal to the $4$-cycle $w'xwy_2$ and $\deg_G(y_2)=4$.
See Figure~\ref{fig-final}(c).

If $S(y_1)\not\subset L(y_2)$, then
we can color $y_1$ by a color in $S(y_1)\setminus L(y_2)$ and remove the color from the lists of neighbors of $y_1$,
obtaining a list assignment $L'$ for the graph $G'=G-\{y_1,y_2\}$.  Observe that $G'$ is not $L'$-colorable, and by the minimality of $G$,
one of the obstructions $Z$ depicted in Figure~\ref{fig-albimpob} appears in $G'$.  However, note that
$|L'(w')|=5$, since $y_1w'\notin E(G)$ by \refclaim{cl-v-septri} applied to $y_1y_2w'$.  It follows that either a vertex with a list of size five
or $x$ is incident with the outer face of $Z$. However, this does not happen for any of the obstructions in Figure~\ref{fig-albimpob}.

We conclude that $S(y_1)\subset L(y_2)$, and by symmetry $S(y'_1)\subset L(y_2)$.
Suppose that there exists a color $c\in S(y_1)\cap S(y'_1)$.
Let $L'$ be the list assignment for $G'=G-\{y_1,y'_1,y_2\}$ obtained by removing $c$ from the lists of neighbors of $y_1$ and $y'_1$.
Note that $G'$ is not $L'$-colorable.  By the minimality of $G$, one of the obstructions $Z$ depicted in Figure~\ref{fig-albimpob} appears in $G'$.
Since $x$ is not incident with the outer face of $Z$, both $w$ and $w'$ belong to $Z$ and $|L'(w)|=|L'(w')|=4$.
Together with \refclaim{cl-v-chords} (note that all vertices $z$ with $|L'(z)|=3$ belong to $V(H)$),
this implies that $Z$ is \ob{V2}, \ob{V3'a} or \ob{V4}. In all the cases, \refclaim{cl-v-2chord} uniquely determines $G$, and the resulting graph is $L$-colorable.
This is a contradiction.

It follows that $S(y_1)$ and $S(y'_1)$ are disjoint.  Since $|L(y_2)|=4$, we
conclude that $|S(y_1)|=|S(y_2)|=2$, and thus $y_1=v_1$ and $y'_1=v_s$, where
$s=3$.  Suppose that there exists a color $c\in S(w')\cap S(y_1)$.  Note that $c\not\in S(y'_1)$.
Let $G'=G-\{w',y_1,y_2\}$ with the list assignment $L'$ obtained from $L$ by removing $c$ from the lists
of neighbors of $w'$ and $y_1$, except for the vertex $y'_1$ where we set $L'(y'_1)=L(y'_1)$.  Note that
$y'_1$ is the only vertex with a list of size three and $x$ is incident with the outer face of $G'$, hence
by Lemma~\ref{lemma-noout}, $G'$ is $L'$-colorable.  However, this implies that $G$ is $L$-colorable, which is a contradiction.
We conclude that $S(w')\cap S(y_1)=\emptyset$, and symmetrically $S(w)\cap S(y'_1)=\emptyset$.

By symmetry, we can assume that $w$ has at most one neighbor in $P$, and thus $|S(w)|\ge 3$.  Since $S(y'_1)$ and $S(w)$ are
disjoint, $S(y'_1)\cup S(y_1)=L(y_2)$, $|S(y'_1)|=2$ and $|L(y_2)|=4$, there exists a color $c\in S(w)\setminus L(y_2)$. Clearly, $c\notin S(y_1)$.  Let $G'=G-\{w,y_2\}$ with the list assignment $L'$ obtained from $L$ by removing
$c$ from the lists of the neighbors of $w$ other than $y_1$ and $y'_1$.  Again, Lemma~\ref{lemma-noout} implies that $G'$ is $L'$-colorable,
giving an $L$-coloring of $G$.  This contradiction completes the proof of Lemma~\ref{lemma-albimps}.
\end{proof}

We are now ready to prove the main result.

\begin{proof}[Proof of Theorem~\ref{thm-main}]
By Lemma~\ref{lemma-albimps}, Lemma~\ref{lemma-albimp} holds with $M=2$.  
Let $X$ be the set of vertices with lists of size 1. The distance condition imposed in the theorem says that $X$ is $M$-scattered (for $M=2$). 
Since every planar graph is $5$-choosable, we can assume that $X\not= \emptyset$. We may also assume that a vertex $x\in X$ is incident with the outer face.  Furthermore, we can assume that $G$ is $\{x\}$-critical.
By Lemma~\ref{lemma-albcorr}, we conclude that all vertices of $G$ except for $x$ have lists of size at least $5$.  However, Theorem~\ref{thm-thom} with $P$ consisting of the vertex $x$ then implies that $G$ is $L$-colorable.
\end{proof}

\bibliographystyle{acm}
\bibliography{5choos}

\begin{thebibliography}{10}

\bibitem{Alb98}
{\sc Albertson, M.~O.}
\newblock You can't paint yourself into a corner.
\newblock {\em J. Combin. Theory, Ser.~B 73}, 2 (1998), 189--194.

\bibitem{AppHak1}
{\sc Appel, K., and Haken, W.}
\newblock Every planar map is four colorable, {P}art {I}: discharging.
\newblock {\em Illinois J. of Math. 21\/} (1977), 429--490.

\bibitem{AppHakKoc}
{\sc Appel, K., Haken, W., and Koch, J.}
\newblock Every planar map is four colorable, {P}art {II}: reducibility.
\newblock {\em Illinois J. of Math. 21\/} (1977), 491--567.

\bibitem{Axenovich20111046}
{\sc Axenovich, M., Hutchinson, J.~P., and Lastrina, M.~A.}
\newblock List precoloring extension in planar graphs.
\newblock {\em Discrete Math. 311}, 12 (2011), 1046--1056.

\bibitem{Ballantine}
{\sc Ballantine, J.~P.}
\newblock A postulational introduction to the four color problem.
\newblock {\em Publ. in Math., Univ. of Washington, Seattle\/} (1930), 1--16.

\bibitem{bohmelc}
{\sc B{\"o}hme, T., Mohar, B., and Stiebitz, M.}
\newblock Dirac's map-color theorem for choosability.
\newblock {\em J. Graph Theory 32\/} (1999), 327--339.

\bibitem{5choosfar}
{\sc Dvo\v{r}\'ak, Z., Lidick\'y, B., and Mohar, B.}
\newblock $5$-choosability of graphs with crossings far apart.
\newblock {\em ArXiv e-prints 1201.3014v2\/} (2015).

\bibitem{Fisk78}
{\sc Fisk, S.}
\newblock The nonexistence of colorings.
\newblock {\em J. Combin. Theory, Ser.~B 24\/} (1978), 247--248.

\bibitem{kawmoh}
{\sc Kawarabayashi, K., and Mohar, B.}
\newblock List-color-critical graphs on a fixed surface.
\newblock In {\em Proceedings of the twentieth Annual ACM-SIAM Symposium on
  Discrete Algorithms\/} (Philadelphia, PA, USA, 2009), SODA '09, Society for
  Industrial and Applied Mathematics, pp.~1156--1165.

\bibitem{lukethe}
{\sc Postle, L.}
\newblock {\em 5-List-Coloring Graphs on Surfaces}.
\newblock PhD thesis, Georgia Institute of Technology, 2012.

\bibitem{PosThoHyperb}
{\sc Postle, L., and Thomas, R.}
\newblock Hyperbolic families and coloring graphs on surfaces.
\newblock Manuscript, 2013.

\bibitem{Postle2015}
{\sc Postle, L., and Thomas, R.}
\newblock Five-list-coloring graphs on surfaces {II}. {A} linear bound for
  critical graphs in a disk.
\newblock {\em Journal of Combinatorial Theory, Series B\/} (2015).
\newblock In press.

\bibitem{thomassen1994}
{\sc Thomassen, C.}
\newblock Every planar graph is 5-choosable.
\newblock {\em J. Combin. Theory, Ser.~B 62\/} (1994), 180--181.

\bibitem{Thomassen97}
{\sc Thomassen, C.}
\newblock Color-critical graphs on a fixed surface.
\newblock {\em J. Combin. Theory, Ser.~B 70\/} (1997), 67--100.

\bibitem{thom-2007}
{\sc Thomassen, C.}
\newblock Exponentially many 5-list-colorings of planar graphs.
\newblock {\em J. Combin. Theory, Ser.~B 97\/} (2007), 571--583.

\bibitem{vizing1976}
{\sc Vizing, V.~G.}
\newblock Vertex colorings with given colors (in russian).
\newblock {\em Metody Diskret. Analiz, Novosibirsk 29\/} (1976), 3--10.

\bibitem{voigt1993}
{\sc Voigt, M.}
\newblock List colourings of planar graphs.
\newblock {\em Discrete Math. 120\/} (1993), 215--219.

\end{thebibliography}
\end{document}